\font\twelvecyr=wncyr10 scaled 1200 
\def\cyr{\fam\cyrfam\twelvecyr\cyracc}
\theoremstyle{plain}
\newtheorem{theorem}{Theorem}[section]
\newtheorem{lemma}[theorem]{Lemma}
\newtheorem{proposition}[theorem]{Proposition}
\newtheorem{corollary}[theorem]{Corollary}
\theoremstyle{definition}
\newtheorem*{definition}{Definition}
\theoremstyle{remark}
\newtheorem*{remark}{Remark}
\newtheorem*{example}{Example}
\newtheorem*{notation}{Notation}
\newtheorem*{acknowledgment}{Acknowledgment}
\numberwithin{equation}{section}
\newcommand{\seclabel}[1]{\label{sec:#1}}   
\newcommand{\sublabel}[1]{\label{sub:#1}}   
\newcommand{\thmlabel}[1]{\label{thm:#1}}   
\newcommand{\lemlabel}[1]{\label{lem:#1}}   
\newcommand{\prplabel}[1]{\label{prp:#1}}   
\newcommand{\eqnlabel}[1]{\label{eqn:#1}}   
\newcommand{\subref}[1]{\ref{sub:#1}}   
\newcommand{\thmref}[1]{\ref{thm:#1}}   
\newcommand{\prpref}[1]{\ref{prp:#1}}   
\newcommand{\eqnref}[1]{\eqref{eqn:#1}} 
\newcommand{\bA}{\mathbb{A}}
\newcommand{\bB}{\mathbb{B}}
\newcommand{\bK}{\mathbb{K}}
\newcommand{\bP}{\mathbb{P}}
\newcommand{\bR}{\mathbb{R}}
\newcommand{\bC}{\mathbb{C}}
\newcommand{\cX}{\mathcal{X}}
\newcommand{\cY}{\mathcal{Y}}
\newcommand{\rr}{\mathbf{r}}
\newcommand{\bl}{\mathbf{l}}
\newcommand{\Gl}{\mathop{\mathrm{Gl}}\nolimits}
\newcommand{\Hom}{\mathrm{Hom}}
\newcommand{\End}{\mathrm{End}}
\newcommand{\Iso}{\mathrm{Iso}}
\newcommand{\Gras}{\mathrm{Gras}}
\newcommand{\id}{\mathrm{id}}
\newcommand{\setof}[2]{\{ #1 \mid #2\}}
\newcommand{\Bigsetof}[2]{\begin{Bmatrix} #1 \,\Big|\, #2 \end{Bmatrix}}
\newcommand{\myand}{\enspace\text{and}\enspace}
\newcommand{\myor}{\enspace\text{or}\enspace}
\newcommand{\inv}{^{-1}}
\begin{document}

\title[Associative Geometries. I]{Associative Geometries. I:
Torsors, linear relations and Grassmannians}

\author{Wolfgang Bertram}

\address{Institut \'{E}lie Cartan Nancy \\
Nancy-Universit\'{e}, CNRS, INRIA, \\
Boulevard des Aiguillettes, B.P. 239, \\
F-54506 Vand\oe{}uvre-l\`{e}s-Nancy, France}

\email{\url{bertram@iecn.u-nancy.fr}}

\author{Michael Kinyon}

\address{Department of Mathematics \\
University of Denver \\
2360 S Gaylord St \\
Denver, Colorado 80208 USA}

\email{\url{mkinyon@math.du.edu}}

\subjclass[2000]{
20N10, 
17C37, 
16W10
}

\keywords{associative algebras and pairs,
torsor (heap, groud, principal homogeneous space),
semitorsor, linear relations, homotopy and isotopy,
Grassmannian, generalized projective geometry}

\begin{abstract}
We define and investigate a geometric object, called
an \emph{associative geometry}, corresponding to an
associative algebra (and, more generally, to an associative pair).
Associative geometries combine aspects of \emph{Lie groups} and of
\emph{generalized projective geometries}, where the former
correspond to the Lie product of an associative algebra and the latter
to its Jordan product.  A further development of the theory encompassing
\emph{involutive} associative algebras will be given in subsequent
work \cite{BeKi09}.
\end{abstract}

\maketitle

\section*{Introduction}
\seclabel{intro}

What is the geometric object corresponding to an \emph{associative} algebra?
The question may come as a bit of a surprise: the philosophy of Noncommutative Geometry
teaches us that, as soon as an algebra becomes noncommutative, we should stop looking
for associated point-spaces, such as manifolds or varieties.
Nevertheless, we raise this question, but aim at something different
than Noncommutative Geometry: we do not try to generalize
the relation between, say, commutative associative algebras and algebraic varieties,
but rather look for an analog of the one between \emph{Lie algebras} and \emph{Lie groups}.
Namely, every associative algebra $\bA$ gives rise to a Lie algebra $\bA^-$ with commutator
bracket $[x,y]=xy-yx$, and thus can be seen as a ``Lie algebra with some additional structure''.
Since the geometric object corresponding to a Lie algebra should be a Lie
group (the unit group $\bA^\times$, in this case), the object
corresponding to the \emph{associative} algebra, called an
``associative geometry'',  should be some kind of ``Lie group with additional structure''.
To get an idea of what this additional structure might be, consider the decomposition
\[
xy = \frac{xy+yx}{2} + \frac{xy-yx}{2} =: x \bullet y + \frac{1}{2}\lbrack x,y\rbrack
\]
of the associative product into its symmetric and skew-symmetric parts.
The symmetric part is a \emph{Jordan} algebra, and the additional structure
will be related to the geometric object corresponding to the Jordan part.
As shown in \cite{Be02}, the ``geometric Jordan object'' is a \emph{generalized
projective geometry}. Therefore, we expect an associative geometry to be
some sort of mixture of projective geometry and Lie groups.
Another hint is given by the notion of \emph{homotopy} in associative algebras:
an associative product $xy$ really gives rise to a family of associative
products
\[
x \cdot_a y := xay
\]
for any fixed element $a$, called the \emph{$a$-homotopes}.
Therefore we should rather expect to deal with a whole family of Lie
groups, instead of looking just at \emph{one} group
corresponding to the choice $a =1 $.

\subsection{Grassmannian torsors}
\sublabel{group_grass}

The following example gives a good idea of the kind of geometries
we have in mind.
Let $W$ be a vector space or module over a commutative field or ring
$\bK$, and for a subspace $E \subset W$, let $C^E$ denote the set of
all subspaces of $W$ complementary to $E$.
It is known that $C^E$ is, in a natural way, an affine space over $\bK$.
%
%
We prove that a similar statement is true
for arbitrary intersections $C^E \cap C^F$ (Theorem 1.2):
they are either empty, or they carry a natural ``affine'' group
structure. By this we mean that, after fixing an arbitrary
element $Y \in C^E \cap C^F$, there is a natural (in general
noncommutative) group structure on $C^E \cap C^F$ with
unit element $Y$.
The construction of the group law is very simple: for
$X,Z \in C^E \cap C^F$, we let
$X \cdot Z := (P^E_X - P^Z_F)(Y)$, where, for any complementary pair
$(U,V)$, $P^U_V$ is the projector onto $V$ with kernel $U$.
Since $X \cdot Z$ indeed depends on $X,E,Y,F,Z$, we write it
also in quintary form
\begin{equation}
\eqnlabel{first_gamma}
\Gamma(X,E,Y,F,Z):= (P^E_X - P^Z_F)(Y).
\end{equation}
The reader is invited to prove the group axioms by direct calculations.
The proofs are elementary, however, the associativity
of the product, for example, is not obvious at a first glance.

Some special cases, however, are relatively clear. If $E=F$, and if
we then identify a subspace $U$ with the projection $P^E_U$, then it
is straightforward to show that the expression $\Gamma(X,E,Y,E,Z)$
in $C^E$ is equivalent to the expression $P^E_X - P^E_Y + P^E_Z$ in
the space of projectors with kernel $E$, and we recover the
classical affine space structure on $C^E$ (see Theorem \thmref{geom_props}).
On the other hand, if
$E$ and $F$ happen to be mutually complementary, then any common
complement of $E$ and $F$ may be identified with the graph of
a bijective linear map $E \to F$, and hence $C^E \cap C^F$ is identified
with the set $\Iso(E,F)$ of linear isomorphisms between $E$ and $F$.
Fixing an origin $Y$ in this set fixes an identification of $E$ and $F$,
and thus identifies $C^E \cap C^F$ with the general linear group $\Gl_{\bK}(E)$.

Summing up, the collection of groups $C^E \cap C^F$, where  $(E,F)$
runs through $\Gras(W)\times \Gras(W)$, the direct product of the Grassmannian
of $W$ with itself, can be seen as some kind of interpolation, or deformation
between general linear groups and vector groups, encoded in $\Gamma$.
The quintary map $\Gamma$ has remarkable properties that will
lead us to the axiomatic definition of associative geometries.

\subsection{Torsors and semitorsors}
\sublabel{torsors}

To eliminate the dependence of the group structures $C^E\cap C^F$ on the
choice of unit element $Y$, we now recall the
``affine'' or ``base point free'' version of the concept of group.
There are several equivalent versions,
going under different names such as
\emph{heap}, \emph{groud}, \emph{flock}, \emph{herd},
\emph{principal homogeneous space}, \emph{abstract coset},
\emph{pregroup} or others. We use what seems to be the most currently
fashionable term, namely \emph{torsor}.
The idea is quite simple (see Appendix A for details):
if, for a given group $G$ with unit element $e$, we want to ``forget
the unit element'', we consider $G$ with the ternary product
\[
G \times G \times G \to G; \quad (x,y,z) \mapsto (xyz):=xy\inv z.
\]
As is easily checked,
this map has the following properties: for all $x,y,z,u,v \in G$,
\begin{align*}
(xy(zuv))=((xyz)uv)\,, \tag{G1} \\
(xxy)=y=(yxx)\,. \tag{G2}
\end{align*}
Conversely, given a set $G$ with a ternary composition having these
properties, for any element $x \in G$ we get a group law on $G$
with unit $x$ by letting $a \cdot_x b:= (axb)$ (the inverse of
$a$ is then $(xax)$) and such that $(abc)=ab\inv c$ in this group.
(This observation is stated explicitly by Certaine in
\cite{Cer43}, based on earlier work Pr\"ufer, Baer, and others.)
Thus the affine concept of the group $G$ is a set $G$ with a
ternary map satisfying (G1) and (G2); this is precisely the
structure we call a \emph{torsor}.

One advantage of the torsor concept, compared to other, equivalent
notions mentioned above, is that it admits two natural and important extensions.
On the one hand, a direct check shows that in any torsor the relation
\[
(xy(zuv))= (x(uzy)v)=((xyz)uv)\,, \tag{G3}
\]
called the \emph{para-associative law}, holds (note the reversal of
arguments in the middle term). Just as groups are generalized by semigroups,
torsors are generalized by \emph{semitorsors} which are simply sets
with a ternary map satisfying (G3). It is already known that this
concept has important applications in geometry and algebra. The idea
can be traced back at least as far work 
of V.V.\ Vagner, \emph{e.g.} \cite{Va66},.

On the other hand, \emph{restriction to the diagonal} in a torsor gives
rise to an interesting product $m(x,y):=(xyx)$.
The map $\sigma_x: y \mapsto m(x,y)$ is just inversion in the group $(G,x)$.
If $G$ is a Lie torsor (defined in the obvious way), then $(G,m)$ is a
\emph{symmetric space} in the sense of Loos \cite{Lo69}.

\subsection{Grassmannian semitorsors}
\sublabel{semitorsor_grass}

One of the remarkable properties of the quintary map $\Gamma$ defined
above is that it admits an ``algebraic continuation'' from the
subset $D(\Gamma) \subset \cX^5$ of $5$-tuples from the Grassmannian
$\cX = \Gras(W)$ where it was initially defined to \emph{all} of $\cX^5$.
The definition given above requires that the pairs $(E,X)$ and
$(F,Z)$ are complementary. On the other hand, fixing an arbitrary
complementary pair $(E,F)$, there is another natural ternary product:
with respect to the decomposition $W=E \oplus F$, subspaces $X,Y,Z,\ldots$
of $W$ can be considered as \emph{linear relations between $E$ and $F$}, and
can be composed as such: $ZY\inv X$ is again a linear relation between $E$
and $F$.
Since $ZY\inv X$ depends on $E$ and on $F$, we get another map
\[
\Gamma(X,E,Y,F,Z):= X Y\inv Z .
\]
Looking more closely at the definition of this map, one realizes that
there is a natural extension of its domain for \emph{all} pairs $(E,F)$,
and that on $D(\Gamma)$ this new definition of $\Gamma$ coincides with
the earlier one given by \eqnref{first_gamma} (Theorem \ref{MainTheorem}).
Moreover, for any fixed pair $(E,F)$, the ternary product
\[
(XYZ):= \Gamma(X,E,Y,F,Z)
\]
turns the Grassmannian $\cX$ into a semitorsor.
The list of remarkable properties of $\Gamma$ does not end here --
we also have \emph{symmetry properties} with respect to the
Klein $4$-group acting on the variables $(X,E,F,Z)$,
certain interesting \emph{diagonal values} relating the map $\Gamma$
to \emph{lattice theoretic properties} of the Grassmannian (Theorem \ref{lattice})
 as well as
\emph{self-distributivity} of the product, reflecting the fact
that all partial maps of $\Gamma$ are \emph{structural}, i.e.,
compatible with the whole structure (Theorem \ref{structure}).
Together, these properties can be used to give an axiomatic
definition of an associative geometry (Chapter 3).

\subsection{Correspondence with associative algebras and pairs}
\sublabel{correspondence}

Taking the Lie functor for Lie groups as model, we wish to define
a multilinear \emph{tangent} \emph{object} attached to an associative geometry
at a given base point. A \emph{base point} in $\cX$ is a fixed
complementary (we say also \emph{transversal}) pair $(o^+,o^-)$.
The pair of abelian groups  $(\bA^+,\bA^-):=(C^{o^-},
C^{o^+})$ then plays the r\^ole of a pair of ``tangent spaces'', and
the r\^ole of the Lie bracket is taken by the following pair of  maps:
\[
f^\pm: \bA^\pm \times \bA^\mp \times \bA^\pm \to \bA^\pm ; \quad
(x,y,z) \mapsto \Gamma(x,o^+,y,o^-,z) .
\]
One proves that $f^\pm$ are trilinear (Theorem 3.4). Since the maps
$f^\pm$ come from a semitorsor, they
form an \emph{associative pair}, i.e., they satisfy the para-associative
law (see Appendix B).
Conversely, one can construct, for every associative pair, an
associative geometry having the given pair as tangent object (Theorem 3.5).
The prototype of an associative pair are operator spaces,
$(\bA^+,\bA^-)=(\Hom(E,F),\Hom(F,E))$, with trilinear products
$f^+(X,Y,Z)=XYZ$, $f^-(X,Y,Z)=ZYX$. They correspond precisely to
Grassmannian geometries $\cX = \Gras(E \oplus F)$ with base point
$(o^+,o^-)=(E,F)$.

\emph{Associative unital algebras} are  associative pairs of
the form $(\bA,\bA)$; in the
example just mentioned, this corresponds to the special case $E=F$.
In this example, the unit element $e$ of $\bA$ corresponds to the diagonal
$\Delta \subset E \oplus E$, and the subspaces $(E,\Delta,F)$
are mutually complementary. On the geometric level, this translates to the
existence of a \emph{transversal triple} $(o^+,e,o^-)$.
Thus the correspondence between associative geometries and associative
pairs contains as a special case the one between associative geometries
with transversal triples and unital associative algebras (Theorem 3.7).

\subsection{Further topics}
\sublabel{further}

Since associative algebras play an important r\^ole in modern mathematics,
the present work is related to a great variety of topics and leads
to many new problems located at the interface of geometry and algebra.
We mention some of them in the final chapter of this work, without
attempting to be exhaustive. In particular, in part II of
this work (\cite{BeKi09}) we will extend the theory to
\emph{involutive} associative algebras (topic (2) mentioned in Chapter 4).

\begin{acknowledgment}
We would like to thank Boris Schein for enlightening us as to the history
of the torsor/groud concept. The second named author would like to thank the
Institut \'Elie Cartan Nancy for hospitality when part of this work was
carried out.
\end{acknowledgment}

\begin{notation}
Throughout this work, $\bK$ denotes  a commutative unital ring and
$\bB$ an associative unital $\bK$-algebra, and we will consider
\emph{right} $\bB$-modules $V,W,\ldots$.
We think of $\bB$ as ``base ring'', and the letter $\bA$ will be
reserved for other associative $\bK$-algebras such as $\End_\bB(W)$.
For a first reading, one may assume that $\bB = \bK$;
only in Theorem 3.7 the possibility to work over non-commutative base
rings becomes crucial.

When viewing submodules as elements of a Grassmannian, we will frequently
use lower case letters to denote them, since this matches our later notation
for abstract associative geometries. However, we will also sometimes switch
back to the upper case notation we have already used whenever it adds
clarity.
\end{notation}

\section{Grassmannian torsors}

The \emph{Grassmannian} of a right $\bB$-module $W$ is the set
$\cX = \Gras(W)=\Gras_\bB(W)$ of all $\bB$-submodules of $W$.
If $x \in \cX$ and $a \in \cX$ are complementary ($W =x \oplus a$),
we will write $x \top a$ and call the pair $(x,a)$ \emph{transversal}.
We write $C_a := a^\top := \{ x \in \cX | \, x \top a \}$ for the set
of all complements of $a$ and
\[
C_{ab} := a^\top \cap b^\top
\]
for the set of common complements of $a,b \in \cX$.
We think of $a^\top$ and $C_{ab}$ (which may or may not be empty)
as ``open chart domains'' in $\cX$. The following discussion makes this
more precise.

\subsection{Connected components and base points}
\sublabel{connected}

\subsubsection*{Connectedness}
We define an equivalence relation in $\cX$:
$x \sim y$ if there is a finite sequence of ``charts joining $x$ and $y$'',
i.e.:
$\exists a_0,a_1,\ldots,a_k$ such that $a_0=x$, $a_k=y$ and
\[
 \forall i=0,\ldots, k-1: \quad  C_{a_i,a_{i+1}} \not= \emptyset.
\]
The equivalence classes of this relation are called
\emph{connected components of $\cX$}.
We say that $x \in \cX$ is \emph{isolated} if its connected component
is a singleton.
If $\bB = \bK$ and $\bK$ is a field, then connected components
are never reduced to a point (unless $x=0$ or $x=W$).
For instance, the connected components of $\Gras(\bK^n)$
are the Grassmannians $\Gras_p(\bK^n)$ of subspaces of a fixed dimension
$p$ (indeed, two subspaces of the same dimension $p$ in $\bK^n$ always
admit a common complement, hence sequences of length $1$ always suffice in the
above condition.

\subsubsection*{Base points and pair geometries}
A \emph{base pair} or \emph{base point}
 in $\cX$ is a fixed transversal pair, often
denoted by $(o^+,o^-)$.
If $(o^+,o^-)$ is a base point, then in general $o^+$ and
$o^-$ belong to different connected components, which we
denote by $\cX^+$ and $\cX^-$.
For instance, in the Grassmann geometry $\Gras(\bK^n)$ over a field
$\bK$, if $o^+$ is of dimension $p$, then $o^-$ has to be of dimension
$q=n-p$, and hence they belong to different components unless $p=q=
\frac{n}{2}$.

More generally, we may consider certain subgeometries of $\cX$, namely
pairs $(\cX^+,\cX^-)$ of sets $\cX^\pm \subset \cX$ 
such that, for every $x \in \cX^\pm$, the set $x^\top$ is a nonempty
subset of $\cX^\mp$.
We refer to $(\cX^+,\cX^-)$ as a \emph{pair geometry}.

For instance, if $W=\bB$,
then $\cX$ is the \emph{space of right ideals in $\bB$}. Fix
 an idempotent  $e \in \bB$ and let
$o^+:= e \bB$, $o^- = (1-e) \bB$ and
$\cX^\pm$ the set of all right ideals in $\bB$ that are isomorphic
to $o^\pm$ and have a complement isomorphic to $o^\mp$.
Then $(\cX^+,\cX^-)$ is a pair geometry.

\subsubsection*{Transversal triples and spaces of the first kind}
We say that $\cX$ is \emph{of the first kind} if there exists
a triple $(a,b,c)$ of mutually transversal elements, and
\emph{of the second kind} else. Clearly,
 $a,b,c$ then all belong to the same connected component
of $\cX$; taking $(a,c)$ as base point $(o^+,o^-)$, we thus
have $\cX^+ = \cX^-$.
Note that $W= a \oplus c$ with $a \cong b \cong c$, so $W$
is ``of even dimension''.
For instance, the Grassmann geometry $\Gras(\bK^n)$ over a field
$\bK$ is of the first kind if and only if $n$ is even,
and the preceding example of a pair geometry of right ideals is
of the first kind if and only if $o^+$ and $o^-$ are isomorphic
as $\bB$-modules. In other words, $\bB$ is a direct sum
of two copies of some other algebra, and $\cX^+=\cX^-$ is
the \emph{projective line over this algebra}, \textit{cf.} \cite{BeNe05}.

\subsection{Basic operators and the product map $\Gamma$}
\sublabel{basic}

If $x$ and $a$ are two complementary $\bB$-submodules,
let $P_x^a:W \to W$ be the
projector onto $x$ with kernel $a$. Since $a$ and $x$ are
$\bB$-modules, this map is $\bB$-linear.
The relations
\[
P_x^a \circ P_z^a = P_x^a, \quad \quad P_x^a \circ P_x^b = P_x^b,
\quad \quad P^z_b \circ P^a_z = 0
\]
will be constantly used in the sequel.
For a $\bB$-linear map $f:W \to W$, we denote by $[f]:=f\mod \bK^\times$
be its projective class with respect to invertible scalars from $\bK$.
By $1$ we denote the (class of) the identity operator on $W$.
We define the following operators :
if $a \top x$ and $z \top b$, define the \emph{middle
multiplication operator} (motivation for this terminology will be given below)
\[
M_{xabz}:= [P_x^a - P^z_b],
\]
and if $a \top x$ and $y \top b$, define the
\emph{left multiplication operator}
\[
L_{xayb}:=[1 - P_a^x P_y^b]
\]
and if $a \top y$ and $z \top b$, define the \emph{right
multiplication operator}
\[
R_{aybz}:=L_{zbya}=[1-P^z_b P^a_y].
\]
For a scalar $s \in \bK$ and a transversal pair $(x,a)$, the
\emph{dilation operator} is defined by
\[
\delta^{(s)}_{xa}:= [s P_a^x + P_x^a] = [1 - (1-s)P_a^x] =
[s 1 + (1-s) P_x^a].
\]
Note that the dilation operator for the scalar $-1$ is also
a middle multiplication operator:
\[
\delta^{(-1)}_{xa} = [- P_a^x + P_x^a] = M_{xaax},
\]
and it is induced by a reflection with respect to a subspace.
Also, $\delta^{(1)}_{xa}=1$ and $\delta^{(0)}_{xa}=[P^a_x]$.

\begin{proposition}
\prplabel{1.1}
{\ } 
\begin{enumerate}[label=\roman*\emph{)},leftmargin=*]
\item (Symmetry)
 $M_{xabz}$ is invariant under permutations of indices
by the Klein $4$-group:
\[
M_{xabz} = M_{axzb} = M_{bzxa} = M_{zbax} .
\]
\item (Fundamental Relation)
Whenever $u,x \top a$ and $v,z \top b$,
\[
R_{aubz} L_{xavb}  = M_{xabz}M_{uabv} =L_{xavb}R_{aubz}  .
\]
\item (Diagonal values)
If $x \in C_{ab}$,
\[
L_{xaxb}=1=R_{axbx},
\]
and, for all $u \in C_{ab}$ and $z \top b$,
\[
M_{uabz}(u) = z = R_{aubz}(u).
\]
\item (Compatibility)
If $x \top a$, $y \top b$, $z  \top b$ and $C_{ab}$ is not empty,
then
\[
L_{xayb}(z)=M_{xabz}(y),
\]
and if $x \top a$, $z \top b$, $y  \top a$ and $C_{ab}$ is not empty,  then
\[
M_{xabz}(y)=R_{aybz}(x).
\]
\item (Invertibility)
Let $(x,a,y,b,z) \in \cX^5$ such that $x,y,z \in C_{ab}$.
Then the operators
\[
L_{xayb}, \quad M_{xabz}, \quad R_{aybz}
\]
are invertible, with inverse operators, respectively,
\[
L_{yaxb}, \quad M_{zabx}, \quad R_{azby}.
\]
\end{enumerate}
\end{proposition}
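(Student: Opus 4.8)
The plan is to reduce all five parts to the three projector identities $P_x^a P_z^a = P_x^a$, $P_x^a P_x^b = P_x^b$, $P_b^z P_z^a = 0$, together with the complementarity relation $P_a^x = 1 - P_x^a$ and the fact that $-1 \in \bK^\times$. Throughout I read a projector $P_{\bullet}^{\bullet}$ as ``image in the subscript, kernel in the superscript'', so that a composite $PP'$ vanishes as soon as the image of $P'$ equals the kernel of $P$.

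Parts (i)--(iii) are then direct computations. For (i) I would rewrite each of the four claimed operators via $P_a^x = 1 - P_x^a$ and $P_b^z = 1 - P_z^b$: every permutation of the indices turns the representative $P_x^a - P_b^z$ into $\pm(P_x^a - P_b^z)$, and since $-1$ is invertible the four projective classes coincide. For (ii) I would expand the representatives: in $R_{aubz}L_{xavb} = (1 - P_b^z P_u^a)(1 - P_a^x P_v^b)$ the quadratic term carries the factor $P_u^a P_a^x = 0$, while in $L_{xavb}R_{aubz}$ it carries $P_v^b P_b^z = 0$, so both products collapse to $1 - P_a^x P_v^b - P_b^z P_u^a$; expanding $M_{xabz}M_{uabv} = (P_x^a - P_b^z)(P_u^a - P_b^v)$, simplifying $P_x^a P_u^a = P_x^a$ and $P_b^z P_b^v = P_b^v$, and then substituting $P_x^a = 1 - P_a^x$, $P_b^v = 1 - P_v^b$ yields the very same expression, so all three agree already as honest operators. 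For the first half of (iii), $P_a^x P_x^b = 0$ gives $L_{xaxb} = 1$ and symmetrically $R_{axbx} = 1$; for the diagonal value I would restrict $M_{uabz}$ and $R_{aubz}$ to the submodule $u$, where each acts as $1 - P_b^z = P_z^b$, and then use that the projection $P_z^b$ maps the complement $u$ of $b$ isomorphically onto the complement $z$.

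For (iv) I would establish the pointwise identity $L_{xayb}(w) = M_{xabz}(P_y^b w)$ for every $w \in z$: writing $w = P_y^b w + P_b^y w$ and using $P_b^z w = 0$, $P_b^z P_b^y = P_b^y$ and $P_a^x = 1 - P_x^a$, both sides reduce to $w - P_y^b w + P_x^a P_y^b w$. Since $P_y^b$ carries the complement $z$ of $b$ onto the complement $y$, the images agree, giving $L_{xayb}(z) = M_{xabz}(y)$; the second identity is the mirror computation using the projection $P_y^a \colon x \to y$ along $a$.

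Part (v) is where a head-on expansion stalls, so I would instead bootstrap from (ii) and (iii). Specializing the Fundamental Relation to $R_{azbz}L_{xaxb} = M_{xabz}M_{zabx}$ and inserting the diagonal values $R_{azbz} = L_{xaxb} = 1$ gives $M_{xabz}M_{zabx} = 1$; exchanging $x$ and $z$ gives the reverse composite, so $M_{zabx} = M_{xabz}^{-1}$. Next, choosing any common complement $w \in C_{ab}$ (which exists since $x,y,z \in C_{ab}$) and using $R_{awbw} = 1$ in the Fundamental Relation yields $L_{xayb} = M_{xabw}M_{waby}$, and the mirror choice $L_{wawb} = 1$ yields $R_{aybz} = M_{wabz}M_{yabw}$. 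Then $L_{xayb}L_{yaxb} = M_{xabw}(M_{waby}M_{yabw})M_{wabx} = M_{xabw}M_{wabx} = 1$ by the $M$-inversion just proved, and symmetrically for the opposite order and for $R$. The main obstacle is precisely this step: a direct expansion of $L_{xayb}L_{yaxb}$ stalls on the factor $P_y^b P_a^y$, which satisfies none of the three basic relations, so the crux is to recognize that (ii) and (iii) allow every $L$ and $R$ to be rewritten as a product of two $M$'s, reducing all three invertibility claims to that of $M$.
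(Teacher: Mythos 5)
Your proposal is correct. Parts (i), (ii), (iii) and (v) run along essentially the same lines as the paper: direct expansion of representatives using the three projector identities for (i)--(iii) (the paper proves one ordering in (ii) by computation and deduces the other from the symmetry (i), whereas you expand all three products to the common form $1 - P_a^x P_v^b - P_b^z P_u^a$ --- a negligible difference), and for (v) the same bootstrap: the fundamental relation plus the diagonal values give the $M$-inversion $M_{xabz}M_{zabx}=1$, and then every $L$ and $R$ is factored as a product of two $M$'s; your explicit factorizations $L_{xayb}=M_{xabw}M_{waby}$ and $R_{aybz}=M_{wabz}M_{yabw}$ are exactly what the paper's ``the other relations are proved similarly'' abbreviates.

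The genuine divergence is (iv). The paper's proof chooses a common complement $u \in C_{ab}$, applies the operator identity $M_{xabz}M_{uaby} = L_{xayb}R_{aubz}$ to the element $u$, and uses the diagonal values $M_{uaby}(u)=y$ and $R_{aubz}(u)=z$; this is precisely where the hypothesis $C_{ab} \neq \emptyset$ enters. Your argument instead verifies the pointwise identity $L_{xayb}(w) = M_{xabz}(P_y^b w)$ for $w \in z$ (which checks out: both sides reduce to $w - P_y^b w + P_x^a P_y^b w$, using $P_b^z w = 0$ and $P_b^z P_b^y = P_b^y$), and then takes images using the surjectivity of the restriction of $P_y^b$ to $z$; the mirror computation with $P_y^a$ handles the second identity. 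No common complement is ever invoked, so you in fact prove the compatibility statements without the assumption $C_{ab} \neq \emptyset$ --- exactly the strengthening that the paper's Remark after the proposition defers to Chapter 2, where it is obtained by the quite different linear-relations formalism. What the paper's route buys is economy (it recycles (ii) and (iii) with no new computation); what yours buys is a stronger statement, at the cost of one additional explicit calculation.
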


\begin{proof}
(i):
\[
M_{xabz}= [P_x^a - P^z_b] =[P^z_b-P_x^a]=M_{bzxa}.
\]
Since this is the only place where we really use that $[f]=[-f]$,
for simplicity of notation, we henceforth omit the brackets $[ \quad ]$.
\[
M_{zbax} = P_z^b - P^x_a=(1-P_b^z)-(1-P_x^a)=M_{xabz}
\]

(ii):
Using in the second line that $P_a^x P_v^b P_b^z P_u^a =0$:
\begin{eqnarray*}
L_{xavb}R_{aubz} & =& (1-P_a^x P_v^b)(1- P_b^z P_u^a) \\
&= &1 - P_a^x P_v^b- P_b^z P_u^a  \cr
&= &1 - (1-P_x^a)(1-P_b^v) - P_b^z P_u^a \cr
&= &P_x^a + P_b^v - P_x^a P_b^v - P_b^z P_u^a \cr
&= &(P_x^a - P^z_b)(P_u^a - P_b^v) \cr
&= &M_{xabz}M_{uabv} .
\end{eqnarray*}
The relation  $R_{aubz} L_{xavb}  = M_{xabz}M_{uabv}$ now follows from
(i).

(iii): $L_{xaxb}=1=R_{axbx}$ is clear.
Fix an element $u \in C_{ab}$. Then, for all $z \top b$,
\[
M_{uabz}(u) = M_{zbau}(u)=(P^b_z - P^u_a)(u)=P^b_z(u)=z
\]
since both $u$ and $z$ are complements of $b$. Similarly,
\[
R_{aubz}(u)= (1-P^z_b P^a_u)(u)=(1-P^z_b)(u)=P^b_z(u)=z.
\]

(iv):
By (ii),
$M_{xabz}M_{uaby} =L_{xayb}R_{aubz}$. Apply this operator to
$u \in C_{ab}$ and use that, by (iii), $M_{uaby}(u)=y$ and $R_{aubz}(u)=z$.
One gets
\[
M_{xabz}(y)=
M_{xabz}M_{uaby}(u) = L_{xayb}R_{aubz}(u) = L_{xayb}(z).
\]
Via the symmetry relation (i), the second equality can also be written
$M_{zbax}(y)=L_{zbya}(x)$ and hence is equivalent to the first one.

(v):
Since $L_{xaxb} = 1 = R_{axbx}$, the fundamental relation (ii) implies
$M_{xabz} M_{zaby}=L_{xayb}$ and
\[
M_{xabz} M_{zabx}=L_{xaxb}=1,
\]
hence $M_{xabz}$ is invertible with inverse $M_{zabx}$.
The other relations are proved similarly.
\end{proof}

\begin{remark}
We will prove in Chapter 2 by different methods that the
assumption $C_{ab} \not= \emptyset$ in (iv) is unnecessary.
\end{remark}

\begin{definition}[\textbf{of the product map} $\Gamma$]
We define a map $\Gamma:D(\Gamma) \to \cX$ on the following
domain of definition: let
\begin{align*}
D_L &:= \setof{(x,a,y,b,z) \in \cX^5}
{x \top a\myand y \top b} \\
D_R &:= \setof{(x,a,y,b,z) \in \cX^5}
{y \top a\myand z \top b} \\
D_M &:= \setof{(x,a,y,b,z) \in \cX^5}
{x \top a,\, z \top b\myand C_{ab} \not= \emptyset} \\
D(\Gamma) &:= D_L \cup D_R \cup D_M\,,
\end{align*}
and define $\Gamma:D(\Gamma) \to \cX$ by
\[
\Gamma(x,a,y,b,z):= 
\begin{cases}
L_{xayb}(z) & \text{if}\quad (x,a,y,b,z) \in D_L \\
R_{aybz}(x) & \text{if}\quad (x,a,y,b,z) \in D_R \\
M_{axbz}(y) & \text{if}\quad (x,a,y,b,z) \in D_M\,.
\end{cases}
\]
\end{definition}

\noindent This is well-defined: if
$(x,a,y,b,z) \in D_L \cap D_R$, then
$y \in C_{ab}$, hence $C_{ab}$ is not empty
and the preceding proposition implies that
\[
L_{xayb}(z)=M_{xabz}(y)=R_{aybz}(x).
\]
Similar remarks apply to the cases $(x,a,y,b,z) \in D_L \cap D_M$
or  $(x,a,y,b,z) \in D_R \cap D_M$.
The quintary map $\Gamma$ explains our terminology and notation:
$L_{xayb}$ is the left multiplication operator, acting on the last
argument $z$, and similarly
$R$ and $M$ denote right and middle multiplication operators.
From the definition it follows easily that the symmetry relation
\[
\Gamma(x,a,y,b,z)=\Gamma(z,b,y,a,x)
\]
holds for all $(x,a,y,b,z) \in D(\Gamma)$.
On the other hand, the relation
\[
\Gamma(x,a,y,b,z)=\Gamma(a,x,y,z,b)
\]
holds if $(x,a,y,z,b) \in D_M$; but at present it is somewhat
complicated to show that this relation is valid on all of $D(\Gamma)$
(this will follow from the results of Chapter 2).
As to the ``diagonal values'', for  $x \in C_{ab}$ we have
\[
\Gamma(x,a,x,b,z)=z=\Gamma(z,b,x,a,x)\,.
\]
If we assume just $a \top x$ and $b \top z$, then
we can only say in general that
\[
\Gamma(x,a,x,b,z) = (1 - P^z_b P^a_x)(x) = P^b_z(x) \subset z\,.
\]
If $a,b \top x$ and $b \top  z$, then, thanks to the symmetry relation
$M_{xabz} = M_{axzb}$,
\begin{equation}
\eqnlabel{diagonalv}
M_{xabz}(a)= \Gamma(x,a,a,b,z)=\Gamma(a,x,a,z,b)=b\,.
\end{equation}

\begin{definition}[\textbf{of the dilation map} $\Pi_s$]
Fix $s \in \bK$. Let
\[
D(\Pi_s) := \setof{(x,a,z) \in \cX^3}
{x \top a \myor z \top a}
\]
and define a ternary map $\Pi_s:D(\Pi_s) \to \cX$ by
\[
\Pi_s(x,a,z):=
\begin{cases}
\delta^{(s)}_{xa}(z) & \text{if}\quad x \top a \\
\delta^{(1-s)}_{za}(x) & \text{if}\quad z \top a  .
\end{cases}
\]
\end{definition}

\noindent As above, this map is well-defined.
The symmetry relation
\[
\Pi_s(x,a,y)=\Pi_{1-s}(y,a,x)
\]
follows easily from the definition.
Note that,
if $s$ is invertible in $\bK$ and $x \top a$,
then the dilation operator $\delta^{(s)}_{xa}$ is invertible
with inverse  $\delta^{(s\inv)}_{xa}$.

\subsection{Grassmannian torsors and their actions}

Recall from \S\subref{torsors} and Appendix A the definition and elementary properties of
\emph{torsors}.

\begin{theorem}
\thmlabel{geom_props}
The Grassmannian geometry $(\cX;\Gamma,\Pi_r)$ defined
in the preceding subsection has the following properties:
\begin{enumerate}[label=\roman*\emph{)},leftmargin=*]
\item For $a,b \in \cX$ fixed, $C_{ab}$ with product
\[
(xyz):=\Gamma(x,a,y,b,z)
\]
is a torsor (which will be denoted by $U_{ab}$).
In particular, for a  triple
$(a,y,b)$ with $y \in C_{ab}$,
$C_{ab}$ is a group with unit $y$ and multiplication
$xz=\Gamma(x,a,y,b,z)$. 
\item The map $\Gamma$ is symmetric under the permutation $(15)(24)$
(reversal of arguments):
\[
\Gamma(x,a,y,b,z)=\Gamma(z,b,y,a,x)
\]
In other words, $U_{ab}$ is the opposite torsor of $U_{ba}$
(same set with reversed product). In particular,
the torsor $U_a:=U_{aa}$ is commutative.
\item The commutative torsor $U_a$ is the underlying additive torsor
of an affine space:
for any $a \in \cX$, $U_a$ is an affine space over $\bK$, with
additive structure given by
\[
x+_y z = \Gamma(x,a,y,a,z),
\]
(sum of $x$ and $z$ with respect to the origin $y$),
and action of scalars given by
\[
sy + (1-s)x = \Pi_s(x,y)
\]
(multiplication of $y$ by $s$ with respect to the origin $x$).
\end{enumerate}
\end{theorem}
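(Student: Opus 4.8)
The plan is to reduce every assertion to the operator identities already assembled in Proposition \prpref{1.1}, so that the one genuinely non-obvious point, associativity, is supplied directly by the Fundamental Relation. For part (i) I would first settle closure and axiom (G2). For $x,y\in C_{ab}$ the operator $L_{xayb}=1-P_a^xP_y^b$ fixes $b$ pointwise and carries $a$ into $a$, and is invertible by Proposition \prpref{1.1}(v); hence it permutes $C_{ab}$, and since $\Gamma(x,a,y,b,z)=L_{xayb}(z)$ the product stays in $C_{ab}$. Then $(xxy)=\Gamma(x,a,x,b,y)=y$ is the diagonal value of Proposition \prpref{1.1}(iii), while $(yxx)=\Gamma(y,a,x,b,x)=\Gamma(x,b,x,a,y)=y$ follows from the reversal symmetry $\Gamma(x,a,y,b,z)=\Gamma(z,b,y,a,x)$ (noted just after the definition of $\Gamma$) together with the same diagonal value with the roles of $a$ and $b$ interchanged, which is legitimate since $C_{ab}=C_{ba}$.

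The heart of (i) is the para-associative law (G1). Since $x,y,u,v\in C_{ab}$ all the relevant transversalities hold, so I may use both partial descriptions $\Gamma(x,a,y,b,w)=L_{xayb}(w)$ and $\Gamma(w,a,u,b,v)=R_{aubv}(w)$, which are valid for every subspace $w$. Substituting one into the other gives
\[
(xy(zuv))=L_{xayb}R_{aubv}(z),\qquad ((xyz)uv)=R_{aubv}L_{xayb}(z),
\]
so that (G1) amounts exactly to the commutation $L_{xayb}R_{aubv}=R_{aubv}L_{xayb}$ --- which is the Fundamental Relation of Proposition \prpref{1.1}(ii), both sides there being equal to $M_{xabv}M_{uaby}$. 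No further computation is needed, and the ``in particular'' clause is then the standard passage from a torsor to its associated groups recalled in \S\subref{torsors}.

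For part (ii) I would check the reversal symmetry on each piece of $D(\Gamma)$ separately. On $D_M$ it is the Klein four-group symmetry $M_{xabz}=M_{zbax}$ of Proposition \prpref{1.1}(i). On $D_L$ the reversed tuple $(z,b,y,a,x)$ lies in $D_R$, and the definition $R_{aybz}=L_{zbya}$ yields $\Gamma(z,b,y,a,x)=R_{byax}(z)=L_{xayb}(z)=\Gamma(x,a,y,b,z)$; the $D_R$ case is symmetric. This says precisely that $U_{ab}$ is the opposite torsor of $U_{ba}$, and specializing $b=a$ gives $U_a=U_a^{\mathrm{op}}$, i.e.\ $U_a$ is commutative.

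By (i)--(ii), $U_a$ is a commutative torsor, hence an abelian group $(C_a,+_y)$ for every choice of origin $y$; to upgrade this to an affine $\bK$-space I would linearize through the injection $\iota\colon C_a\to\End_\bB(W)$, $x\mapsto P_x^a$, which is injective because $x=\im P_x^a$. Working only from the relations $P_u^aP_v^a=P_u^a$ and $P_a^x+P_x^a=1$, and from the observation that reduction modulo $a$ sends every $P_\bullet^a$ to $\id_{W/a}$, one checks that $P_x^a-P_y^a+P_z^a$ and $sP_y^a+(1-s)P_x^a$ are again idempotents with kernel exactly $a$, whose images are $\Gamma(x,a,y,a,z)$ and $\Pi_s(x,a,y)$ respectively (for instance $P_x^a-P_y^a+P_z^a$ agrees with $L_{xaya}$ on $z$, since $P_x^a-P_y^a=-P_a^xP_y^a$). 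Thus $\iota$ identifies $(C_a,\Gamma,\Pi_s)$ with the affine subspace of kernel-$a$ projections inside the $\bK$-module $\End_\bB(W)$, on which the ternary operation is $(p,q,r)\mapsto p-q+r$ and the scalar action is $sq+(1-s)p$ --- manifestly the additive torsor of an affine $\bK$-space. The main obstacle is not any single deep step, as Proposition \prpref{1.1} already carries the associativity; it is rather the care required in (iii) to verify that these projector combinations remain projections with kernel exactly $a$ for all scalars $s$, including non-units, and this is precisely what the reduction modulo $a$ provides.
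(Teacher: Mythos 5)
Your proposal is correct, and its core is exactly the paper's argument: associativity is reduced to the commutation $L_{xayb}R_{aubv}=R_{aubv}L_{xayb}$ of the Fundamental Relation (Proposition \prpref{1.1}(ii)), the idempotent laws to Proposition \prpref{1.1}(iii), and part (ii) to the reversal symmetry recorded right after the definition of $\Gamma$. The differences are in execution. For closure in (i), the paper uses the \emph{middle} operator $g=M_{xabz}$, whose diagonal values (equation \eqnref{diagonalv}) give $g(a)=b$ and $g(b)=a$ as genuine equalities of submodules, so that $g(y)$ is transversal to $g(a)$ and $g(b)$, hence lies in $C_{ab}$; you use $L_{xayb}$ instead, which stabilizes $a$ and $b$. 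Be aware that your stated reasons (``carries $a$ into $a$'' plus invertibility) do not quite suffice by themselves: for modules of infinite rank an invertible operator can map a submodule \emph{properly} into itself. The gap closes in one line, because Proposition \prpref{1.1}(v) identifies the inverse as $L_{yaxb}$, which also carries $a$ into $a$, whence $L_{xayb}(a)=a$; alternatively, $w=L_{xayb}(\zeta)\in a$ with $\zeta\in z$ forces $\zeta=L_{yaxb}(w)\in z\cap a=0$, giving $L_{xayb}(z)\cap a=0$ directly. For (iii), the paper simply recalls from \cite{Be04} that the kernel-$a$ projectors form an affine space under $(p,q,r)\mapsto p-q+r$ and $(p,q)\mapsto sq+(1-s)p$, and then matches $\Gamma$ and $\Pi_s$ against that structure; you reprove this closure yourself via the reduction-mod-$a$ argument (idempotence, kernel exactly $a$), which makes the proof self-contained at the cost of a little extra verification, while the matching computations ($P_x^a-P_y^a+P_z^a$ for the ternary sum, $sP_y^a+(1-s)P_x^a$ for the scalar action) coincide with the paper's.
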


\begin{proof}
(i)
Let us show first that $C_{ab}$ is stable under the ternary map
$(xyz)$. Let $x,y,z \in C_{ab}$ and consider the bijective linear
map $g:=M_{xabz}$. We show that $g(y) \in C_{ab}$.
By equation \eqnref{diagonalv}, we have the ``diagonal values''
$M_{xabz}(a)=b$ and $M_{xabz}(b)=a$.
Thus, if $y$ is complementary to $a$ and $b$,
$g(y)$ is complementary both to
$g(a) = b$ and to $g(b) = a$, which means that $g(y)\in C_{ab}$.

The associativity follows immediately from the ``fundamental
relation'' (Proposition \prpref{1.1}(ii)):
\[
(xv(yuz))=L_{xavb}R_{aubz} (y) =
R_{aubz} L_{xavb}(y)  = ((xvy)uz),
\]
and the idempotent laws from
\[
(xxy)=L_{xaxb}(y)=1 (y)=y, \quad
(yxx)=R_{axbx}(y)=1 (y)=y.
\]
Thus $C_{ab}$ is a torsor.

(ii)
This has already been shown in the preceding section.

(iii)
The set $C_a$ is the space of complements of $a$.
It is well-known that this is an affine space over $\bK$.
Let us recall how this affine structure is
defined (see, \textit{e.g.}, \cite{Be04}):
elements $v \in C_a$ are in one-to-one correspondence with projectors
of the form $P^a_v$. Then, for $u,v,w \in U_a$, the structure map
$(u,v,w) \mapsto u+_v w$ in the affine space $C_a$ is given by associating
to $(u,v,w)$ the point corresponding to the projector $P_u^a - P_v^a + P_w^a$,
and the structure map $(v,w) \mapsto r \cdot_v w = (1-r)v+rw$ by associating
to $(v,w)$ the point corresponding to the projector
$r P^a_u + (1-r)P^a_v$.
Now write $y=P_y^a(W)$; then we have
\begin{align*}
\Gamma(x,a,y,a,z) &= (P_x^a - P^z_a)(y) \\
&= (P_x^a - 1 + P_z^a)P_y^a(W) \\
&= (P_x^a - P_y^a + P_z^a) (W),
\end{align*}
and
\[
\Pi_s(x,a,y) =
((1-s)P^a_x + s 1)(z) =
((1-s)P^a_x + s P^a_z)(W),
\]
proving that (iii) describes the usual affine structure of $C_a$.
\end{proof}

\subsubsection*{Homomorphisms}
We think of the maps $\Gamma: D(\Gamma)  \to \cX$ and
$\Pi_r:D(\Pi_r) \to \cX$ as quintary, resp.\ ternary
``product maps'' defined on (parts of) direct products
$\cX^5$, resp.\ $\cX^3$.
Thus we have basic categorical notions just as for groups, rings, modules
etc.:  \emph{homomorphisms} are  maps
$g:\cX \to \cY$
preserving transversality ($x \top y$ implies $g(x) \top g(y)$) and
such that, for all $5$-tuples in $D(\Gamma)$, resp.\ triples in $D(\Pi_r)$,
\begin{align*}
g \left( \Gamma(u,c,v,d,w) \right) &=
\Gamma \left( g(u),g(c),g(v),g(d),g(w)\right)\,, \\
g \left( \Pi_r(u,c,v) \right) &=
\Pi_r \left( g(u),g(c),g(v)\right) \,.
\end{align*}
Essentially, this means that all restrictions of $g$,
\[
U_{ab} \to U_{g(a),g(b)}, \qquad U_a \to U_{g(a)},
\]
are usual homomorphisms (of torsors, resp.\ of affine spaces).
We may summarize this by saying
 that $g$ is ``locally linear'' and ``compatible with all
local group structures''.

\begin{theorem}
\thmlabel{local}
Assume $x,y,z \in U_{ab}$. Then the operators
\[
M_{xabz}:\cX \to \cX, \quad
L_{xayb}:\cX \to \cX, \quad
R_{aybz}:\cX \to \cX
\]
are automorphisms of the geometry $(\cX,\Gamma,\Pi_r)$, and
the groups $(U_{ab},y)$ act on $\cX$ by automorphisms
both from the left and from the right via
\[
(U_{ab},y) \times \cX \to \cX, \quad (x,z) \mapsto
L_{xayb}(z)=\Gamma(x,a,y,b,z),
\]
respectively
\[
\cX \times (U_{ab},y) \to \cX, \quad (x,z) \mapsto
R_{aybz}(x)=\Gamma(x,a,y,b,z).
\]
For fixed $(a,y,b)$,
the left and right  actions commute.
\end{theorem}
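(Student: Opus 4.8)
The plan is to prove Theorem \thmref{local} in three stages: first that the three families of operators are automorphisms of $(\cX,\Gamma,\Pi_r)$, then that the group $(U_{ab},y)$ acts by these automorphisms on both sides, and finally that the two actions commute. The key algebraic fact driving everything is the Fundamental Relation from Proposition \prpref{1.1}(ii), which expresses products and compositions of $L$, $R$, and $M$ operators in terms of one another; this is what lets an abstract ``automorphism'' claim reduce to linear-algebra identities among projectors.

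First I would establish that each operator is an automorphism of the geometry. Since $x,y,z \in U_{ab}=C_{ab}$, Proposition \prpref{1.1}(v) gives that $M_{xabz}$, $L_{xayb}$, $R_{aybz}$ are invertible as $\bB$-linear maps $W \to W$, hence induce bijections of $\cX$ preserving transversality (an invertible linear map sends complementary pairs to complementary pairs). To check compatibility with $\Gamma$, I would use that conjugation of a projector $P^c_d$ by an invertible linear map $g$ yields the projector $P^{g(c)}_{g(d)}$; therefore $g\, M_{u c v d}\, g\inv = M_{g(u),g(c),g(v),g(d)}$ and similarly for $L$ and $R$, since these operators are built as $[1-P\,P]$ or $[P-P]$ expressions in projectors. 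Applying $g$ to $\Gamma(u,c,v,d,w)=L_{ucvd}(w)$ (or the $R$- or $M$-expression, according to which piece of $D(\Gamma)$ the tuple lies in) and inserting $g\inv g$ then transports the defining operator to its image-indexed version, giving $g(\Gamma(u,c,v,d,w))=\Gamma(g(u),g(c),g(v),g(d),g(w))$. The identical conjugation argument handles $\Pi_r$, since $\delta^{(s)}_{xa}$ is also a projector expression. The only subtlety is that $\Gamma$ is defined piecewise on $D_L\cup D_R\cup D_M$, so I must confirm $g$ preserves each of these domains (again immediate from transversality preservation and from $g$ mapping $C_{cd}$ bijectively onto $C_{g(c),g(d)}$) and that the piecewise definition is respected; the well-definedness already proved after the definition of $\Gamma$ guarantees consistency on overlaps.

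Next I would verify the action axioms. That $(x,z)\mapsto L_{xayb}(z)=\Gamma(x,a,y,b,z)$ is a left action of the group $(U_{ab},y)$ means the unit $y$ acts trivially and the action is compatible with the group product $x\cdot_y x' = \Gamma(x,a,y,b,x')$. The unit condition is Proposition \prpref{1.1}(iii): $L_{yayb}=1$. For compatibility I must show $L_{(x\cdot_y x'),a,y,b}=L_{xayb}L_{x'ayb}$ as operators, or equivalently apply both sides to an arbitrary $z$; this is exactly the para-associative/associativity identity that Theorem \thmref{geom_props}(i) already packages as the torsor structure, so I would deduce it from the Fundamental Relation just as associativity was deduced in the proof of that theorem. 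The right action is handled symmetrically using $R_{axbx}=1$ and the mirror identity, or more cleanly via the symmetry relation $\Gamma(x,a,y,b,z)=\Gamma(z,b,y,a,x)$ that exchanges left and right.

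Finally, commutativity of the two actions is the statement that $L_{xayb}$ and $R_{ay'bz}$ commute as operators for all admissible $x,z$ with the middle point $y=y'$ fixed — but this is precisely the content of the Fundamental Relation (ii), which asserts $R_{aubz}L_{xavb}=L_{xavb}R_{aubz}$. Specializing $u,v$ to the fixed origin $y$ gives the desired commutation directly. The main obstacle I anticipate is not any single hard computation but rather bookkeeping: carefully tracking which of the three domain-pieces $D_L,D_R,D_M$ a given tuple falls into when checking the homomorphism property for $\Gamma$, and confirming that the conjugation identity $g\,P^c_d\,g\inv=P^{g(c)}_{g(d)}$ together with domain-preservation suffices uniformly across all three cases. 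Once the conjugation principle for projectors is stated cleanly, every remaining step is a short application of Proposition \prpref{1.1}.
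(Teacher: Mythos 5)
Your first and third stages match the paper's own argument: the automorphism claim is exactly the paper's naturality argument (push-forward by invertible $\bB$-linear maps, via $g \circ P^a_x = P^{g(a)}_{g(x)} \circ g$), and the commutation of the two actions is, as you say, precisely the operator identity of Proposition \prpref{1.1}(ii) specialized to $u=v=y$; that step is legitimate for arbitrary elements of $\cX$ because the transversality hypotheses of the fundamental relation bear on the \emph{indices} of the operators, not on the subspace being acted upon.

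The gap is in your second stage. The compatibility you need is the operator identity $L_{xayb}L_{x'ayb} = L_{\Gamma(x,a,y,b,x'),a,y,b}$ on all of $\cX$, i.e., for every $z \in \cX$,
\[
\Gamma\bigl(x,a,y,b,\Gamma(x',a,y,b,z)\bigr) = \Gamma\bigl(\Gamma(x,a,y,b,x'),a,y,b,z\bigr).
\]
This is \emph{not} ``exactly'' the associativity packaged in Theorem \thmref{geom_props}(i): that theorem gives the torsor identity only for arguments lying in $C_{ab}$, whereas the action is on all of $\cX$. Nor can you deduce it for general $z$ ``just as associativity was deduced'' there: that deduction rewrites the inner product $\Gamma(x',a,y,b,z)$ as $R_{aybz}(x')$ and then invokes $L_{xayb}R_{aybz} = R_{aybz}L_{xayb}$, and \emph{both} steps require $z \top b$ (the tuple must lie in $D_R$, and Proposition \prpref{1.1}(ii) assumes $v,z \top b$). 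When $z$ is not transversal to $b$, only the $D_L$ branch of the definition of $\Gamma$ is available and the fundamental relation simply does not apply. This is exactly the subtlety the paper flags: its proof observes that the identity holds ``if $z$ is sufficiently nice'', and for arbitrary $z$ it defers to Chapter 2, where the extended $\Gamma$ is shown to be a semitorsor for each fixed pair $(a,b)$ (Theorem \ref{MainTheorem}(2)) by a change-of-variables computation in the calculus of linear relations --- a genuinely different mechanism from Proposition \prpref{1.1}. To close your argument you must either carry out the direct projector computation proving the displayed identity for arbitrary $z$ (the ``slightly lengthy exercise'' the paper mentions), or invoke the Chapter 2 semitorsor theorem; as written, your reduction to Theorem \thmref{geom_props}(i) would fail precisely for those $z$ outside $C_{ab}$ on which the theorem asserts the group acts.
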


\begin{proof}
The construction of the product map $\Gamma$ is ``natural'' in the
sense that all elements of $\Gl_\bB(W)$ (acting
from the left on $W$, commuting with the right $\bB$-module
structure) act by automorphisms of $(\cX,\Gamma)$,
just by ordinary push-forward of sets.
This follows immediately from the relation
$g \circ P^a_x = P^{g(a)}_{g(x)} \circ g$.
In particular, the invertible linear operators $M_{xabz}$,
$L_{xayb}$ and $R_{aybz}$ induce automorphisms of $(\cX,\Gamma)$.

Now fix $y \in U_{ab}$ and consider it as the unit in the group $(U_{ab},y)$.
The claim on the left action amounts to the identities
$L_{yayb} = \id$ (which we already know) and, for all
$x,x' \in U_{ab}$ and all $z \in \cX$,
\[
\Gamma(x,a,y,b,\Gamma(x',a,y,b,z))= \Gamma(\Gamma(x,a,y,b,x'),a,y,b,z).
\]
First, note that, if $z$ is ``sufficiently nice'', i.e., such that
the fundamental relation (Proposition \prpref{1.1}(ii)) applies, then this holds
indeed. We will show in Chapter 2 that the identity in question holds
very generally, and this will prove our claim.
Therefore we leave it as a (slighly lengthy) exercise to the interested
reader to prove the claim in the present framework.
The claims concerning the right action are proved in the same way,
and the fact that both actions commute
is precisely the content of the fundamental relation (Proposition \prpref{1.1}(ii))
\end{proof}

\subsubsection*{Inner automorphisms}
We call automorphisms of the geometry defined by the preceding
theorem \emph{inner automorphisms}, and the group generated by
them the \emph{inner automorphism group}.
Note that middle multiplications $M_{xabz}$ are honest automorphisms
of the geometry $(\cX,\Gamma)$, although they are
\emph{anti}-automorphisms of the torsor $U_{ab}$; this is due to the
fact that they exchange $a$ and $b$.
On the other hand, $L_{xayb}$ and $R_{aybz}$  are
automorphisms  of the whole geometry \emph{and}
of $U_{ab}$.

Note also that the action of the  groups $U_{ab}$
is of course very far from being regular on its orbits, except
on $U_{ab}$ itself. For instance, $a$ and $b$ are fixed points of these
actions, since $\Gamma(x,a,y,b,b)=b$ and $\Gamma(x,a,y,b,a)=a$.

Finally, the statements of the preceding two theorems amount to
certain algebraic identities for the multiplication map $\Gamma$.
This will be taken up in Chapter 2, where we will not have to worry
about domains of definition.

\subsection{Affine picture of the torsor $U_{ab}$}
\sublabel{affine}

It is useful to have ``explicit formulas'' for our map $\Gamma$.
Such formulas can be obtained by introducing ``coordinates''
on $\cX$ in the following way (see  \cite{Be04}).
First of all, choose a base point $(o^+,o^-)$ and consider the
pair geometry $(\cX^+,\cX^-)$, where $\cX^\pm$ is the space of
all submodules isomorphic to $o^\pm$ and having a complement isomorphic
to $o^\mp$.
We identify $\cX^+$ with injections $x:o^+ \to W$ of $\bB$ right-modules,
modulo equivalence under the action of the group $G:=\Gl(o^+)$
($x \cong x \circ g$, where $g$ acts on $o^+$
on the left), and $\cX^-$ with $\bB$-linear
surjections $a:W \to o^+$ (modulo equivalence
$a \cong g \circ a$ for $g \in G$).
Equivalence classes are denoted by $[x]$, resp.\ $[a]$.

\begin{proposition}
\prplabel{formulae}
The following explicit formulae hold for $x,y,z \in \cX^+$, $a,b \in \cX^-$.
\begin{enumerate}[label=\roman*\emph{)},leftmargin=*]
\item if $x \top a$ and $z \top b$ (middle multiplication), then
\begin{equation}
\eqnlabel{form1}
\Gamma\Big( [x],[a],[y],[b],[z]\Big) =
\Big\lbrack x(ax)\inv ay -y + z (bz)\inv by \Big\rbrack\,,
\end{equation}
\item if $a \top x$ and $b \top y$ (left multiplication), then
\begin{equation}
\eqnlabel{form2}
\Gamma\Big( [x],[a],[y],[b],[z]\Big) =
\Big\lbrack x(ax)\inv ay(by)\inv(bz)  -y(by)\inv(bz) + z  \Big\rbrack\,,
\end{equation}
\item  if $a \top y$ and $b \top z$ (right multiplication), then
\begin{equation}
\eqnlabel{form3}
\Gamma\Big( [x],[a],[y],[b],[z]\Big) =
\Big\lbrack x- y (ay)\inv ax + z (bz)\inv za (ay)\inv ax  \Big\rbrack\,.
\end{equation}
\end{enumerate}
\end{proposition}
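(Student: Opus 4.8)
The plan is to reduce all three formulae to a single computation, namely the expression of a projector in the chosen coordinates. Recall that $x\in\cX^+$ is represented by an injection $x:o^+\to W$ whose image is the submodule, while $a\in\cX^-$ is represented by a surjection $a:W\to o^+$ whose kernel is the submodule. First I would record that the transversality $x\top a$ is equivalent to $ax\in\Gl(o^+)$: indeed $\im x\cap\ker a=0$ and $\im x+\ker a=W$ are exactly the injectivity and surjectivity of the composite $ax:o^+\to o^+$. Granting this, the projector onto $x$ along $a$ is
\[
P_x^a=x(ax)\inv a,
\]
which one checks by verifying $P_x^a\circ x=x$ and $P_x^a|_{\ker a}=0$; the complementary projector is $P_a^x=1-P_x^a$. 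Together with $P_y^b=y(by)\inv b$ and $P^z_b=1-z(bz)\inv b$ (valid whenever the relevant transversality holds), this identity is the only geometric input needed.

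The three formulae then follow by substituting these expressions into the operators $M_{xabz}=P_x^a-P^z_b$, $L_{xayb}=1-P_a^xP_y^b$ and $R_{aybz}=1-P^z_bP^a_y$ of \S\subref{basic}, and applying the resulting endomorphism of $W$ to the appropriate injection. The crucial bookkeeping point is that pushing forward the submodule represented by an injection $w:o^+\to W$ under an operator $T:W\to W$ yields the submodule $\im(T\circ w)$, represented by $T\circ w$. Thus in the middle case one computes
\[
M_{xabz}(y)=\bigl(x(ax)\inv a-1+z(bz)\inv b\bigr)\circ y,
\]
which is exactly \eqnref{form1}; in the left case one applies $L_{xayb}=1-y(by)\inv b+x(ax)\inv ay(by)\inv b$ to $z$ to obtain \eqnref{form2}; and in the right case one applies $R_{aybz}$ to $x$ to obtain \eqnref{form3}. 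Each is a direct expansion, the only simplification being the substitution $P^z_b=1-P_z^b$.

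The step that really requires care -- and which I would regard as the main obstacle -- is well-definedness, i.e. independence of the representing injections and surjections. Here the key observation is that the projector formula $P_x^a=x(ax)\inv a$ is manifestly invariant under the equivalences $x\mapsto xg$ and $a\mapsto ha$ for $g,h\in\Gl(o^+)$, since the factors of $g$ and $h$ cancel against those in $(ax)\inv$; hence the operators $M,L,R$ depend only on the submodules, as they must. It then remains to note that the output, being of the form $T\circ w$ with $w$ one of the injections $x,y,z$, transforms under the residual equivalence $w\mapsto wg$ into $T\circ w\circ g=(T\circ w)g$, which represents the same element of $\cX^+$. Once these invariances are in place, the three displayed formulae are well-defined, and the computations above complete the proof; the residual effort is purely the routine algebra of composing the maps $o^+\to W$ and $W\to o^+$ in the correct order.
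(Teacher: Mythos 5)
Your proposal is correct and follows essentially the same route as the paper's own proof: identify the projector $P^a_x = x(ax)\inv a$ (and its companions $P^b_y$, $P^b_z$) from kernel/image considerations, substitute into the operators $M_{xabz}$, $L_{xayb}$, $R_{aybz}$, apply them to the remaining injection, and settle well-definedness by the invariance of these expressions under the $\Gl(o^+)$-equivalences $x \mapsto xg$, $a \mapsto ga$. One remark concerning the statement rather than your argument: a careful expansion of $R_{aybz}$ yields the last term $z(bz)\inv by(ay)\inv ax$, so the factor ``$za$'' printed in \eqnref{form3} is a typo for ``$by$'' (the paper's own proof derives the operator $1-P_y^a + P_z^b P^a_y$, which matches your computation).
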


\begin{proof}
The right hand side of \eqnref{form1} is a well-defined element of $\cX$, as is seen
be replacing $x$ by $x \circ g$, resp.\ $y$ by $y \circ g$, $z$ by
$z  \circ g$ and $a$ by $g \circ a$, $b$ by $g \circ b$.
Note that $[x]$ and $[a]$ are transversal if and only if
$ax:o^+ \to o^+$ is invertible.
Now,  the operator
\[
x(ax)\inv a: W \to o^+ \to W
\]
has kernel $a$ and image $x$ and is idempotent, therefore it
is  $P^a_x$. Similarly, we see that
$z (bz)\inv b$ is $P_z^b$, and hence the right hand side is
induced by the operator
$P^a_x  - 1 + P^b_z =M_{xabz}$.
Similarly, we see that the right hand side of \eqnref{form2}
is induced by the linear operator
\[
P_x^a P^b_y - P_y^b + 1 = (P_x^a - 1)P^b_y + 1 = 1 - P_a^x P_y^b =
L_{axby}
\]
and the one of \eqnref{form3} by
$
1-P_y^a + P_z^b P^a_y=1+(P^b_z -1)P^a_y = 1 - P^z_b P^a_y =
R_{zbya}.
$
\end{proof}

As usual in projective geometry, the projective formulas from the preceding
result may be affinely re-written:  if $y \top b$, we may
affinize by taking $([y],[b])$ as base point $(o^+,o^-)$:
we write $W=o^- \oplus o^+$; then  injections $x:o^+ \to W$,
$z:o^+ \to W$ that are transversal to the first factor
can be identified with column vectors (by normalizing the second component
to be the identity operator on $o^+$)
\[
x=\begin{pmatrix} X \cr 1\cr\end{pmatrix} , \qquad
z=\begin{pmatrix} Z \cr 1\cr\end{pmatrix}
\]
(columns with $X,Z \in \Hom(o^+,o^-)$).
In other terms, $x$ and $z$ are graphs of linear operators $X,Z : o^+ \to o^-$.
Surjections $a:W \to o^+$ that are transversal to the second factor
correspond to row vectors $(A,1)$ (row with $A \in \Hom(o^-,o^+)$).
Note, however, that the kernel of $(A,1)$ is determined by the
condition $Au + v=0$, i.e., $v = -Au$, and hence $a$ is the graph of
$-A:o^- \to o^+$. Therefore we write $a = (-A,1)$.
The base point $y = o^+$ is the column $(0,1)^t$, and the base point $b=o^-$
is the row $(0,1)$.
Since $ax=(-A,1)(X,1)^t = 1 - AX$, $a$ and $x$ are transversal iff
$1 - AX:o^+ \to o^+$ is an invertible operator (in Jordan theoretic language:
the pair $(X,A)$ is \emph{quasi-invertible}, cf.\ \cite{Lo75}).
Using this, any of the three formulas from the preceding proposition leads to
the ``affine picture'':
\[
\Gamma(x,a,y,b,z)=
\begin{bmatrix}
\begin{pmatrix} X \cr 1 \end{pmatrix} (1-AX)\inv -
\begin{pmatrix} 0 \cr 1 \end{pmatrix} +
\begin{pmatrix} Z \cr 1 \end{pmatrix}
\end{bmatrix}
=
\begin{bmatrix}
\begin{pmatrix} - ZAX + X + Z \cr 1 \end{pmatrix}
\end{bmatrix}\,.
\]
Finally, identifying $x$ with $X$, $y$ with $Y$ and so on, we may
write
\[
\Gamma(X,A,O^+,O^-,Z) = X - ZAX +  Z\,.
\]
This formula  is interesting in many respects:
it is affine in all three variables, and the product $ZAX$ from
the associative pair $\bigl(\Hom(o^+,o^-),\Hom(o^-,o^+)\bigr)$ shows up.
We will give conceptual explanations of these facts later on.
Also, it is an easy exercise to check directly that
$(X,Z) \mapsto X - ZAX +  Z$ defines an associative product on
$\Hom(o^+,o^-)$ and induces a
 group structure on the
set of elements $X$ such that $1-AX$ is invertible.

\subsubsection*{Other ``rational'' formulas}
More generally, having fixed $(o^+,o^-)$, we may write $a,b$ as row-,
and $x,y,z$ as column vectors, and then we get the general formula
\begin{multline*}
\Gamma(X,A,Y,B,Z) = \\
\begin{bmatrix}
\begin{pmatrix} X \cr 1 \end{pmatrix} (1-AX)\inv(1-AY) -
\begin{pmatrix} Y \cr 1 \end{pmatrix} +
\begin{pmatrix} Z \cr 1 \end{pmatrix} (1-BZ)\inv (1-BY)
\end{bmatrix}\,,
\end{multline*}
which is (the class of) a vector with second component (``denominator'')
\[
D:=(1-AX)\inv(1-AY) - 1 + (1-BZ)\inv (1-BY),
\]
and first component (``numerator'')
\[
N:= X (1-AX)\inv(1-AY) - Y + Z (1-BZ)\inv (1-BY),
\]
so that the affine formula is $\Gamma(X,A,Y,B,Z)=ND\inv$.
Besides the above choice ($Y=O^+$, $B=O^-$), another reasonable choice is
just $B=O^-$, leading to
\[
\Gamma(X,A,Y,O^-,Z)=X- (Y-Z)(1-AY)\inv (1-AX)\,.
\]
Similarly, for $Y=O^+$ we get formulas that, in case $A=B$, correspond to
well-known Jordan theoretic formulas for the quasi-inverse.
Such formulas show that, if we work in finite dimension over a field,
$\Gamma$ is a rational map in the sense of algebraic geometry, and
if we work in a topological setting over topological fields or rings,
then $\Gamma$ will have smoothness properties similar to the ones
described in \cite{BeNe05}.

\subsubsection*{Case of a geometry of the first kind}
Assume there is a transversal triple, say, $(o^+,e,o^-)$.
We may assume that $e$ is the diagonal in $W=o^- \oplus o^+$.
Take, in the formulas given above, $a=0=(0,1)$,  $b=\infty=(1,0)$, $y=(1,1)^t$,
$ax=(0,1)(X, 1)^t=1$,
$bz=(1,0)(Z,1)^t=Z$,
$ay=1$, $by=1$, so we get
\[
\Gamma(X,0,e,\infty,Z)=
\begin{bmatrix}
\begin{pmatrix} X \cr 1 \cr \end{pmatrix} -
\begin{pmatrix} 1 \cr 1 \cr \end{pmatrix} +
\begin{pmatrix} Z \cr 1 \cr \end{pmatrix} Z\inv
\end{bmatrix} =
\begin{bmatrix}
\begin{pmatrix} X \cr Z\inv \cr \end{pmatrix}
\end{bmatrix} =
\begin{bmatrix}
\begin{pmatrix} XZ \cr 1 \cr \end{pmatrix}
\end{bmatrix}\,,
\]
and hence the affine picture is the algebra
$\End_{\bB}(o^+)$  with  its usual product.
Taking $a=\infty$, $b=0$ gives the opposite of the usual product.
Replacing $e$ by $y = \setof{(v,Yv)}{Y:o^+ \to o^-}$ (graph of an invertible
linear map $Y$), we get the affine picture
\[
\Gamma(X,0,Y,\infty,Z)=
\begin{bmatrix}
\begin{pmatrix} XY\inv Z \cr 1 \cr \end{pmatrix}
\end{bmatrix}\,.
\]

\subsection{Affinization: the transversal case}

If $a$ and $b$ are arbitrary, then in general the torsor
$U_{ab}$ will be empty. Therefore we look at the pair $(U_a,U_b)$.

\begin{theorem}\label{pair!}
For all $a,b \in \cX$, we have
\[
\Gamma(U_a,a,U_b,b,U_a) \subset U_a, \quad
\Gamma(U_b,a,U_a,b,U_b) \subset U_b \, .
\]
In other words,  the maps
\begin{align*}
U_a \times U_b \times U_a \to U_a ; & \quad
(x,y,z) \mapsto (xyz)^+:=L_{xayb}(z)=\Gamma(x,a,y,b,z)\,, \\
U_b \times U_a \times U_b \to U_b ; & \quad
(x,y,z) \mapsto (xyz)^-:=R_{aybz}(x)=\Gamma(x,a,y,b,z)
\end{align*}
are well-defined.  If, moreover, $a \top b$, then both maps
are trilinear, and they form an \emph{associative pair}, i.e., they
satisfy the para-associative law (cf.\ Appendix B)
\[
(xy(uvw)^\pm)^\pm = ((xyu)^\pm vw)^\pm = (x(vuy)^\mp w)^\pm.
\]
\end{theorem}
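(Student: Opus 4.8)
My plan is to handle the two inclusions (well-definedness) first, for arbitrary $a,b$, and only afterwards invoke the hypothesis $a \top b$ for trilinearity and para-associativity. The key observation for the inclusions is that the left multiplication operator $L_{xayb} = 1 - P_a^x P_y^b$ is congruent to the identity modulo $a$: for every $w \in W$,
\[
L_{xayb}(w) - w = -P_a^x P_y^b(w) \in \im P_a^x = a,
\]
so $L_{xayb} \equiv \id \pmod a$. Granting this, I claim $g := L_{xayb}$ carries any complement of $a$ to a complement of $a$. For surjectivity $g(z)+a = W$ (with $z \top a$): given $w = z_0 + a_0$, $z_0 \in z$, $a_0 \in a$, write $z_0 = g(z_0) - (g(z_0)-z_0)$ and note $g(z_0)-z_0 \in a$, so $z_0 \in g(z)+a$ and hence $w \in g(z)+a$. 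For $g(z)\cap a = 0$: if $g(z_0)\in a$ with $z_0\in z$, then $z_0 = g(z_0)-(g(z_0)-z_0)\in a$, whence $z_0 \in z\cap a = 0$ and $g(z_0)=0$. Thus $g(z)\oplus a = W$, i.e.\ $\Gamma(U_a,a,U_b,b,U_a)\subset U_a$. The second inclusion is identical with $a,b$ interchanged, since $R_{aybz} = 1 - P_b^z P_y^a \equiv \id \pmod b$ because $\im P_b^z = b$.

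For the remaining assertions I assume $a\top b$ and write $W = a\oplus b$, so that $b\in U_a$ and $a\in U_b$ serve as origins. Every complement of $a$ is the graph of a unique $\bB$-linear map $b\to a$, giving an identification $U_a \cong \Hom_\bB(b,a)$ under which $b\mapsto 0$ and the affine structure of Theorem \thmref{geom_props}(iii) becomes the linear structure of $\Hom_\bB(b,a)$; similarly $U_b\cong\Hom_\bB(a,b)$ with $a\mapsto 0$. Writing $x,z\in U_a$ as graphs of $X,Z\in\Hom_\bB(b,a)$ and $y\in U_b$ as the graph of $Y\in\Hom_\bB(a,b)$, a direct block computation with respect to $W = a\oplus b$ gives
\[
P_a^x = \begin{pmatrix} 1 & -X \\ 0 & 0 \end{pmatrix}, \qquad
P_y^b = \begin{pmatrix} 1 & 0 \\ Y & 0 \end{pmatrix}, \qquad
L_{xayb} = \begin{pmatrix} XY & 0 \\ 0 & 1 \end{pmatrix}.
\]
Applying $L_{xayb}$ to the graph of $Z$ shows that $(xyz)^+$ is the graph of $XYZ$, so in coordinates $(xyz)^+ = XYZ$; the analogous computation for $R_{aybz} = 1 - P_b^z P_y^a$ on $U_b\times U_a\times U_b$ yields $(xyz)^- = ZYX$. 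Both are $\bK$-trilinear, being (reversed) composition in the operator spaces $\bigl(\Hom_\bB(b,a),\Hom_\bB(a,b)\bigr)$.

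Finally, in these coordinates the para-associative law reduces to a bracketing identity for a single product of five composable operators. For example
\[
(xy(uvw)^+)^+ = XY(UVW), \quad ((xyu)^+vw)^+ = (XYU)VW, \quad (x(vuy)^-w)^+ = X(YUV)W,
\]
and all three equal $XYUVW$ by associativity of composition; the remaining cases and the variant with outer sign $-$ are verbatim the same. The only genuinely delicate point is the pair of inclusions, which must hold for \emph{all} $a,b$ — even when $C_{ab}=\emptyset$ and no torsor $U_{ab}$ is available — and the congruence $L_{xayb}\equiv\id\pmod a$ is precisely what secures this with no invertibility input. Everything after that is the routine coordinate computation just indicated.
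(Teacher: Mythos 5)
Your proposal is correct, and its first half takes a genuinely different route from the paper's. For the well-definedness inclusions, the paper writes $L_{xayb}$ as a block matrix with respect to the decomposition $W = a \oplus x$ and checks that it sends the graph of an operator $Z : x \to a$ to the graph of $(1-\alpha)Z - \beta$, hence again to a complement of $a$; you instead isolate the coordinate-free fact that $L_{xayb} \equiv \id \pmod{a}$ (since $\im P_a^x = a$) and prove abstractly that any linear map congruent to the identity modulo $a$ carries complements of $a$ to complements of $a$. Your argument is cleaner, requires no choice of coordinates, and makes transparent that nothing beyond $x \top a$, $y \top b$ is used --- in particular no invertibility and no hypothesis on $C_{ab}$; what it gives up is the explicit formula $Z \mapsto (1-\alpha)Z - \beta$, which the paper's computation yields as a by-product and records as evidence that $z \mapsto (xyz)^+$ is affine (a fact invoked again around Corollary \ref{pair!!}). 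From the point where $a \top b$ is assumed, your proof and the paper's coincide: the same block computation with respect to $W = a \oplus b$, the same formulas $(xyz)^+ = XYZ$ and $(xyz)^- = ZYX$ under the graph identifications $U_a \cong \Hom_\bB(b,a)$ and $U_b \cong \Hom_\bB(a,b)$, and para-associativity from associativity of composition --- a step the paper compresses into the remark that this pair of maps is the prototype of an associative pair, and which you spell out. You are in fact slightly more careful than the paper in stating that the graph identification transports the affine structure of Theorem \thmref{geom_props}(iii) to the linear structure of the Hom-spaces, which is what makes the word ``trilinear'' meaningful.
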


\begin{proof}
Assume that $x \top a$ and $y \top b$.
By a direct calculation, we will show that $L_{xayb}(U_a) \subset U_a$.
Let us write $L_{xayb}$ in matrix form with respect to the decomposition
$W=a \oplus x$.
The projectors $P^x_a$ and $P^b_y$ can be written
\[
P^x_a=\begin{pmatrix} 1 & 0 \cr 0 & 0 \end{pmatrix}, \quad
P^b_y=\begin{pmatrix} \alpha & \beta \cr \gamma & \delta \end{pmatrix}
\]
whith $\alpha \in \End(a)$, $\beta \in \Hom(x,a)$, etc.
Thus
\[
L_{xayb} = 1 - \begin{pmatrix} 1 & 0 \cr 0 & 0 \end{pmatrix}
\begin{pmatrix} \alpha & \beta \cr \gamma & \delta \end{pmatrix} =
\begin{pmatrix} 1-\alpha & -\beta \cr 0 & 1 \end{pmatrix} .
\]
Let $z \in U_a$; it can be written as the graph
$\{ (Zv,v) | \, v \in x \}$ of a linear operator $Z:x \to a$.
Since
\[
L_{xayb} \begin{pmatrix} Zv \cr v \cr\end{pmatrix}
 = \begin{pmatrix} 1-\alpha & -\beta \cr 0 & 1 \end{pmatrix}
\begin{pmatrix} Zv \cr v \cr\end{pmatrix} =
\begin{pmatrix} (1-\alpha)Zv - \beta v \cr v \cr \end{pmatrix},
\]
$L_{xayb}(z)$ is the graph of the linear operator
$(1-\alpha)Z - \beta : x \to a$, and hence is again transversal to $a$,
so $( \quad )^+$ is well-defined.
By symmetry, it follows that $( \quad )^-$ is well-defined.
Moreover, the calculation shows  that $z \mapsto (xyz)^+$
is affine (we will see later that this map is actually affine with respect to
all three variables, see Corollary \ref{pair!!}).

\smallskip
Now assume that $a \top b$, and
 write $L_{xayb}$ in matrix form with respect to the decomposition
$W=a \oplus b$.
The projectors $P^a_x$ and $P^b_y$ can be written
\[
P^a_x=\begin{pmatrix} 0 & X \cr 0 & 1 \end{pmatrix}, \quad
P^b_y=\begin{pmatrix} 1 & 0 \cr Y & 0 \end{pmatrix}
\]
where $X \in \Hom(b,a)$ and $Y \in \Hom(a,b)$.
We get
\[
L_{xayb} = 1 - \begin{pmatrix} 1-\begin{pmatrix} 0 & X \cr 0 & 1 \end{pmatrix} \end{pmatrix}
\begin{pmatrix} 1 & 0 \cr Y & 0 \end{pmatrix} =
1 - \begin{pmatrix} 1-XY & 0 \cr 0 & 0 \end{pmatrix}
= \begin{pmatrix}  XY & 0   \cr  0 & 1  \end{pmatrix}
\]
and, writing  $z \in U_a$ as a graph $\{ (Zv,v) | v \in b \}$, we get
\[
L_{xayb} \begin{pmatrix} Zv \cr v \end{pmatrix} =
\begin{pmatrix}  XY & 0   \cr  0 & 1  \end{pmatrix}
\begin{pmatrix} Zv \cr v \end{pmatrix} =
 \begin{pmatrix} XYZ v  \cr v \end{pmatrix} ,
\]
hence $L_{xayb}(z)$ is the graph of $XYZ:b \to a$. Thus,
with $V^+=U_a\cong \Hom(b,a)$, $V^-=U_b \cong \Hom(a,b)$,
the first ternary map is given by
\[
V^+ \times V^- \times V^+ \to V^+, \quad (X,Y,Z) \mapsto XYZ.
\]
Similarly, one shows that the second ternary map is
given by
\[
V^- \times V^+ \times V^- \to V^-, \quad (X,Y,Z) \mapsto ZYX.
\]
This pair of maps is the prototype of an associative pair (see Appendix B).
\end{proof}

At this stage, the appearance of the trilinear expression $ZYX$, resp.\
$ZAX$, both in the affine pictures of the map from the preceding theorem
and in the preceding section, related by the identity
\begin{equation}
X - (X - ZAX +  Z) + Z =  ZAX , \label{coinc}
\end{equation}
looks like a pure coincidence.
A conceptual explanation will be given in Chapter 3 (Lemma \ref{coinc'}).

\section{Grassmannian semitorsors}
\seclabel{semitorsor}

In this chapter we extend the definition of the product map
$\Gamma$ onto all of $\cX^5$, and we
show that  the most important algebraic identities extend also.
We use notation and general notions explained in the first
section of the preceding chapter.

\subsection{Composition of relations}

Recall that, if $A,B,C,\ldots$ are any sets,
we can \emph{compose
relations}: for subsets
$x \subset A \times B$, $y \subset B \times C$,
\[
y \circ x := yx :=  \setof{(u,w) \in A \times C}
{\exists v \in B: \, (u,v) \in x, (v,w) \in y}\,.
\]
Composition is associative: both $(z \circ y) \circ x$ and
$z \circ (y \circ x)$ are equal to
\begin{equation}
z \circ y \circ x = \setof{(u,w) \in A \times D}
{\exists (v_1,v_2) \in y: \,
(u,v_1) \in x, (v_2,w) \in z} \,.
\label{assoc}
\end{equation}
If $x$ and $y$ are graphs of
maps $X$, resp.\ $Y$ ($v=Xu$, $w=Yv$) then $y \circ x$
is the graph of $YX$   ($w=Yv=YXu$).
The \emph{reverse relation} of $x$ is
$$
x\inv := \setof{(w,v) \in B \times A}{(v,w) \in x}.
$$
We have $(yx)\inv = x\inv y\inv$, and
if $x$ is the graph of a bijective map, then
$x\inv$ is the graph of its inverse map.
For $x,y,z \subset A \times B$, we get another relation between $A$ and $B$ by
$zy^{-1}x $. Obviously, this ternary composition satisfies the para-associative law, and hence
relations between $A$ and $B$ form
a semitorsor. Letting $W:=A \times B$, we have the explicit formula
\begin{align*}
zy^{-1}x &=
\Bigsetof{\omega = (\alpha',\beta') \in W}
{\begin{array}{c}
\exists \eta=(\alpha'',\beta'') \in y:
 \\
(\alpha',\beta'') \in x, (\alpha'',\beta') \in z
\end{array}}\,
\\
& =
\Bigsetof{\omega  \in W}
{\begin{array}{c}
\exists \alpha',\alpha'' \in A, \exists \beta',\beta'' \in B, \exists
\eta \in y, \exists
\xi \in x, \exists \zeta \in z:
 \\
\omega = (\alpha',\beta'),
\eta = (\alpha'',\beta''),
\xi=(\alpha',\beta''),
\zeta = (\alpha'',\beta')
\end{array}}\,
\end{align*}

\subsection{Composition of linear relations}

Now assume that $A,B,C,\ldots$
are {\em linear} spaces over $\bB$ (i.e., right modules)
and that all relations are \emph{linear
relations} (i.e., submodules of $A \oplus B$, etc.).
Then $zy^{-1}x$ is again a linear relation.
Identifying $A$ with the first and $B$ with the second factor
in $W:=A \oplus B$, the description of $zy^{-1}x$ given above can
be rewritten, by introducing the new variables
$\alpha:=\alpha' - \alpha''$,
$\beta:=\beta' - \beta''$,
\begin{align*}
zy^{-1}x &=
\Bigsetof{\omega  \in W}
{\begin{array}{c}
\exists \alpha',\alpha'' \in a, \exists \beta',\beta'' \in b, \exists
\eta \in y,
\exists \xi \in x, \exists \zeta \in z:
 \\
\omega = \alpha' + \beta',
\eta = \alpha'' + \beta'',
\xi=\alpha' + \beta'',
\zeta = \alpha'' + \beta'
\end{array}}\,
\\
&=
\Bigsetof{\omega  \in W}
{\begin{array}{c}
\exists \alpha',\alpha \in a, \exists \beta',\beta \in b, \exists \eta \in y,
\exists \xi \in x, \exists \zeta \in z:
 \\
\omega = \alpha' + \beta',
\eta = \omega - \alpha -  \beta,
\xi=\omega - \beta,
\zeta =  \omega - \alpha
\end{array}}\, .
\end{align*}
In order to stress that the product $xy^{-1}z$ depends also on
$A$ and $B$,  we will henceforth use lowercase letters $a$ and $b$
and write $W=a \oplus b$.

\begin{lemma}
Assume $W=a \oplus b$ and let $x,y,z \in \Gras_\bB(W)$. Then
\[
zy\inv x =
\Bigsetof{\omega \in W}
{\begin{array}{c}
\exists \xi \in x,
\exists \alpha \in a,
\exists \eta \in y,
\exists \beta \in b,
\exists \zeta \in z : \\
\omega = \zeta + \alpha
= \alpha + \eta + \beta
= \xi + \beta
\end{array}}\,.
\]
\end{lemma}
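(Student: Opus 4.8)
The plan is to obtain the asserted description directly from the definition of the ternary composition $zy\inv x$ as a composition of linear relations; in fact it is essentially the formula derived in the lines immediately preceding the statement, merely rewritten in symmetric form. First I would recall that, by the associativity formula \eqref{assoc} applied with $y$ replaced by its reverse relation $y\inv$, an element $\omega \in W = a \oplus b$ belongs to $zy\inv x$ precisely when, writing $\omega = u + w$ with $u \in a$ and $w \in b$, there exist $v_1 \in b$ and $v_2 \in a$ with $u + v_1 \in x$, $v_2 + v_1 \in y$ and $v_2 + w \in z$. Here $v_1$ is the $b$-component produced from the input $u \in a$ by $x$, and $v_2 \in a$ is the intermediate element paired with $v_1$ by $y$ and then fed into $z$.

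Next I would introduce the change of variables that converts these three membership conditions into the single symmetric chain of the statement. Putting $\xi := u + v_1 \in x$, $\eta := v_2 + v_1 \in y$, $\zeta := v_2 + w \in z$, and setting $\alpha := u - v_2 \in a$ and $\beta := w - v_1 \in b$, a one-line check gives $\xi + \beta = \zeta + \alpha = \alpha + \eta + \beta = u + w = \omega$, which is exactly the right-hand side. This establishes that the left-hand side is contained in the right-hand side.

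For the reverse inclusion I would run the substitution backwards: given $\xi \in x$, $\eta \in y$, $\zeta \in z$, $\alpha \in a$ and $\beta \in b$ with $\omega = \zeta + \alpha = \alpha + \eta + \beta = \xi + \beta$, I decompose $\omega = u + w$ uniquely and recover $v_1 := w - \beta \in b$ and $v_2 := u - \alpha \in a$. Then the identities $\xi = u + v_1$, $\zeta = v_2 + w$ and $\eta = v_2 + v_1$ exhibit $(u,v_1) \in x$, $(v_2,w) \in z$ and $(v_2,v_1) \in y$, so $\omega \in zy\inv x$. Since the two passages are mutually inverse, the two sets coincide.

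I do not expect any genuine obstacle here: the entire content is the bookkeeping of the splitting $W = a \oplus b$. The only points deserving care are that the auxiliary components $\alpha', \beta'$ appearing in the formula just above the lemma are redundant, being forced to equal the unique $a$- and $b$-parts of $\omega$, so that they may be suppressed; and that the remaining relations are truly equivalent to the displayed symmetric chain, which the mutually inverse substitutions above confirm.
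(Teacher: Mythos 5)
Your proof is correct and follows essentially the same route as the paper: both start from the relation-composition description of $zy\inv x$ in $W = a \oplus b$, observe that the decomposition $\omega = u + w$ (the paper's $\alpha' + \beta'$) is automatic, and pass to the symmetric form via the same change of variables $\alpha = u - v_2$, $\beta = w - v_1$ (the paper's $\alpha' - \alpha''$, $\beta' - \beta''$). The only difference is presentational: the paper compresses this into a two-line remark referring to its preceding displayed computation, while you verify both inclusions explicitly.
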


\begin{proof}
Since $W = a \oplus b$, the first condition ($\exists
\alpha' \in a, \beta' \in b$: $\omega = \alpha' + \beta'$)
in the preceding description is always satisfied and can hence
be omitted in the description of $zy\inv x$.
Replacing $\alpha$ by $-\alpha$ and $\beta$ by $-\beta$, the claim
follows.
\end{proof}

\subsection{The extended product map}
\sublabel{extended}

Motivated by the considerations from the preceding section, we now
\emph{define} the product map
$\Gamma:\cX^5 \to \cX$ for \emph{all} $5$-tuples of the Grassmannian
$\cX = \Gras_\bB(W)$ by
\[
\Gamma(x,a,y,b,z)  :=
\Bigsetof{\omega \in W}
{\begin{array}{c}
\exists \xi \in x,
\exists \alpha \in a,
\exists \eta \in y,
\exists \beta \in b,
\exists \zeta \in z : \\
\omega = \zeta + \alpha
= \alpha + \eta + \beta
= \xi + \beta
\end{array}}\,.
\]
We will show, among other things, that this notation is
in keeping with the one introduced in the preceding chapter.
Firstly, however, we collect various equivalent formulas for
$\Gamma$. The three conditions
\begin{equation}
\begin{matrix}
\omega & = & \eta + \alpha + \beta \cr
\omega & = & \beta + \xi \cr
\omega & = & \alpha + \zeta
\end{matrix}
\label{(2.1)}
\end{equation}
can be re-written in various ways. For instance, subtracting
the last two equations from the first one  we get the equivalent
conditions
\begin{equation}
\omega  =  - \eta + \xi + \zeta, \quad
\omega  =  \beta + \xi, \quad
\omega =  \alpha + \zeta
\label{(2.2)}
\end{equation}
and hence, replacing $\eta$ by $-\eta$, we get
\[
\Gamma(x,a,y,b,z)=
\Bigsetof{\omega \in W}
{\begin{array}{c}
\exists \xi \in x,
\exists \alpha \in a,
\exists \eta \in y,
\exists \beta \in b,
\exists \zeta \in z : \\
\omega = \zeta + \alpha
= \xi + \eta + \zeta
= \xi + \beta
\end{array}}\, .
\]
Next, letting $\alpha'=-\alpha$ and $\beta'=-\beta$,
conditions (\ref{(2.1)}) are equivalent to
\begin{equation}
\eta = \omega + \alpha' + \beta', \quad
\zeta = \omega + \alpha',\quad
\xi = \omega + \beta' ,
\label{(2.3)}
\end{equation}
and hence we get
\begin{align*}
\Gamma(x,a,y,b,z) &=
\Bigsetof{\omega \in W}
{\begin{array}{c}
\exists \xi \in x,
\exists \alpha \in a,
\exists \eta \in y,
\exists \beta \in b,
\exists \zeta \in z : \\
\eta = \omega + \alpha + \beta,
\zeta = \omega + \alpha,
\xi = \omega + \beta
\end{array}}
\cr
&=
\Bigsetof{ \omega \in W}{\exists \beta \in b, \exists \alpha \in a :
\omega + \alpha \in z,
\omega + \alpha + \beta  \in  y ,
\omega + \beta \in  x} .
\end{align*}
The following lemma now follows by  straightforward
changes of variables:

\begin{lemma}
\lemlabel{gamma-equivs}
For all $x,a,y,b,z \in \cX$,
\begin{align*}
\Gamma(x,a,y,b,z) &=
\Bigsetof{ \omega \in W}{\exists \xi \in x, \exists \zeta \in z :
\zeta + \omega  \in   a,
\zeta + \omega+ \xi  \in  y ,
\omega + \xi \in  b}  \\
&=
\Bigsetof{ \omega \in W}{\exists \xi \in x, \exists \alpha \in a :
\omega - \alpha \in z,
\xi - \alpha  \in  y ,
\omega - \xi \in  b}  \\
&=
\Bigsetof{ \omega \in W}{\exists \beta \in b, \exists \zeta \in z :
\zeta - \omega  \in   a,
\zeta - \beta  \in  y ,
\omega - \beta \in x}  \\
&=
\Bigsetof{\omega \in W}{\exists \eta \in y, \exists \beta \in b :
\omega - \eta - \beta  \in   a,
\beta + \eta \in z ,
\omega - \beta \in x}  \\
&=
\Bigsetof{ \omega \in W}{\exists \eta \in y, \exists \zeta \in z :
\omega  + \zeta  \in   a,
\zeta + \eta \in b ,
\omega + \zeta + \eta \in x}
\end{align*}
\end{lemma}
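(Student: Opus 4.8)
The plan is to treat the three defining membership relations as a small linear system and to read off each of the five descriptions by quantifying over a different pair of the participating elements. By \eqref{(2.1)}, an element $\omega$ lies in $\Gamma(x,a,y,b,z)$ if and only if there exist $\xi \in x$, $\alpha \in a$, $\eta \in y$, $\beta \in b$, $\zeta \in z$ with
\[
\omega = \eta + \alpha + \beta, \qquad \omega = \beta + \xi, \qquad \omega = \alpha + \zeta .
\]
These are three independent linear relations among the five elements $\xi,\alpha,\eta,\beta,\zeta$; for fixed $\omega$ they leave two of the five free and determine the remaining three as $\pm$-combinations of $\omega$ and the two free ones. Since each of $x,a,y,b,z$ is a submodule, hence closed under negation, the sign with which a determined element enters its membership condition is irrelevant, and we may freely replace any free variable by its negative. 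Thus every admissible choice of a free pair yields a description of $\Gamma(x,a,y,b,z)$ by two existential quantifiers, and the five stated formulas correspond to five such choices.

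First I would record, for each formula, which pair is taken free: the pairs $(\xi,\zeta)$, $(\xi,\alpha)$, $(\beta,\zeta)$, $(\eta,\beta)$, $(\eta,\zeta)$ give, in order, the five displayed formulas. In each case one solves the system for the other three elements and transcribes the three memberships that were \emph{not} used to name a free variable. For instance, taking $\xi$ and $\alpha$ free gives $\beta = \omega - \xi$, $\zeta = \omega - \alpha$ and $\eta = \omega-\alpha-\beta = \xi - \alpha$, so the conditions $\zeta \in z$, $\eta \in y$, $\beta \in b$ read $\omega - \alpha \in z$, $\xi - \alpha \in y$, $\omega - \xi \in b$, which is the second formula (here no relabeling is needed). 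Taking instead $\eta$ and $\zeta$ free gives $\alpha = \omega - \zeta$, $\beta = \omega-\eta-\alpha = \zeta - \eta$ and $\xi = \omega - \beta = \omega - \zeta + \eta$; after the single relabeling $\zeta \mapsto -\zeta$ the conditions $\alpha \in a$, $\beta \in b$, $\xi \in x$ become $\omega + \zeta \in a$, $\zeta + \eta \in b$, $\omega + \zeta + \eta \in x$, which is the fifth formula. The remaining three formulas, with free pairs $(\xi,\zeta)$, $(\beta,\zeta)$ and $(\eta,\beta)$, are obtained in exactly the same manner, adjusting signs via the submodule property where the stated form requires it.

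The only point demanding attention is to check that each chosen pair genuinely parametrizes the solution set, i.e.\ that the system can be solved \emph{uniquely} for the other three elements. A pair fails to do so precisely when its two members already constitute, by themselves, the whole of one of the three equations; this excludes only the pairs $\{\beta,\xi\}$ and $\{\alpha,\zeta\}$, since the first relation involves three elements. All five pairs listed above avoid these, so each parametrization is valid. There is therefore no substantive obstacle beyond this bookkeeping and the sign adjustments, which is exactly the ``straightforward changes of variables'' anticipated in the text.
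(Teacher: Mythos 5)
Your proof is correct and takes exactly the approach the paper intends: the paper's own proof is the one-line remark that the lemma ``follows by straightforward changes of variables'' from the conditions \eqref{(2.1)}, and your systematic argument---choose a free pair, solve the three equations for the remaining elements, and adjust signs using that submodules are closed under negation---is precisely that computation carried out in full. Your bookkeeping observation that only the pairs $\{\beta,\xi\}$ and $\{\alpha,\zeta\}$ fail to parametrize even reproduces the paper's remark after the lemma that $(x,b)$ and $(a,z)$ are the only inadmissible pairs of variables.
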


We refer to the descriptions of the lemma as the ``$(x,z)$-'',
``$(x,a)$-description'', and so on.
The $(a,b)$-description is particularly useful for the proof of the
theorem below.
One may note that the only pairs of variables that cannot be used for
such a description
are $(a,z)$ and $(x, b)$, and that the signs in the terms appearing in these
descriptions can be chosen positive if the pair is ``homogeneous''
(a subpair of $(x,y,z)$ or of $(a,b)$), whereas for ``mixed'' pairs we
cannot get rid of signs.

\begin{theorem}
\label{MainTheorem}
The map $\Gamma: \cX^5 \to \cX$ extends the product map
defined in the preceding chapter, and has the following properties:
\begin{enumerate}[label=\emph{(}\arabic*\emph{)},leftmargin=*]
\item
It is symmetric under the Klein $4$-group:
\begin{align*}
\Gamma(x,a,y,b,z) &= \Gamma(z,b,y,a,x)\,, \tag{a} \\
\Gamma(x,a,y,b,z) &= \Gamma(a,x,y,z,b)\,. \tag{b}
\end{align*}
\item
For any pair $(a,b) \in \cX^2$, the product
$(xyz) := \Gamma(x,a,y,b,z)$ on $\cX^3$ satisfies the properties
of a semitorsor, that is,
\[
\Gamma\bigl(x,a,u,b,\Gamma(y,a,v,b,z)\bigr) =
\Gamma\bigl(x,a,\Gamma(v,a,y,b,u),b,z\bigr) =
\Gamma\bigl(\Gamma(x,a,u,b,y),a,v,b,z\bigr)\,.
\]
We will write $\cX_{ab}$ for $\cX$ equipped with this semitorsor
structure. Then the semitorsor $\cX_{ba}$ is the opposite semitorsor
of $\cX_{ab}$; in particular, $\cX_{aa}$ is a commutative semitorsor, for
any $a$.
\end{enumerate}
\end{theorem}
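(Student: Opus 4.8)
The plan is to work entirely with the set-theoretic ``$(a,b)$-description'' of $\Gamma$ established in Lemma \lemref{gamma-equivs}, since that description treats $a$ and $b$ on an equal footing and exposes the composition-of-relations structure that makes the semitorsor axioms transparent. The key observation is that for a \emph{fixed} transversal decomposition one has $\Gamma(x,a,y,b,z) = zy\inv x$ as a composition of linear relations (this is the content of the Lemma preceding Section \ref{extended} and the definition in \S\subref{extended}), and composition of relations is associative with a well-behaved reversal operation, so the para-associative law holds automatically. The difficulty is that here $a$ and $b$ are \emph{arbitrary} submodules, not a transversal pair, so the clean identification of $\cX$ with ``relations between $a$ and $b$'' need not be available and one must argue directly from the incidence description.

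First I would prove part (1). Symmetry (a) is immediate from the definition: interchanging $(x,z)$ and $(a,b)$ simultaneously permutes the roles of $(\xi,\zeta)$ and $(\alpha,\beta)$ in the three defining equations $\omega = \zeta+\alpha = \alpha+\eta+\beta = \xi+\beta$, which are visibly symmetric under that swap. For symmetry (b), I would use one of the alternate descriptions from Lemma \lemref{gamma-equivs} --- specifically, the defining equations written in the form \eqref{(2.3)}, $\eta = \omega+\alpha+\beta$, $\zeta=\omega+\alpha$, $\xi=\omega+\beta$, treat the triple $(x,z)\leftrightarrow(a,b)$ with a relabeling that realizes the second Klein generator; I would identify the correct substitution of existential witnesses that carries the $(x,a,y,b,z)$-description to the $(a,x,y,z,b)$-description and verify the three equations match after that substitution.

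Next, for part (2), the cleanest route is to compute each of the two iterated products directly from the incidence description and show both equal a common ``quintuple-composition'' set, in the same spirit as equation \eqref{assoc}. Concretely, I would expand $\Gamma\bigl(x,a,u,b,\Gamma(y,a,v,b,z)\bigr)$ by writing $\omega$ in the outer product as constrained by witnesses in $x$, $a$, $u$, $b$ and an element of the inner product $\Gamma(y,a,v,b,z)$, then substitute the incidence description of that inner element. This yields a set described by existential quantifiers over witnesses in $x,y,z$ and several copies of elements of $a$ and $b$, subject to a system of linear incidence equations. I would then do the same for the other two groupings and check that all three produce the \emph{same} constraint system after eliminating the intermediate variables --- the middle expression, with its reversed arguments $(v,a,y,b,u)$, being exactly what the para-associative reversal predicts. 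The opposite-semitorsor claim ($\cX_{ba}$ opposite to $\cX_{ab}$) then follows immediately from symmetry (a), since reversing the roles of $a$ and $b$ reverses the order of the product, and commutativity of $\cX_{aa}$ is the special case $a=b$.

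The main obstacle I expect is the associativity computation in part (2). Because $a$ and $b$ are arbitrary and the products are genuine relational compositions rather than group operations, one cannot simply quote a group-theoretic identity; instead one must carefully track a moderately large system of existentially quantified incidence equations and verify that the two eliminations yield literally the same subset of $W$. The para-associative reversal (the argument flip in the middle term) is precisely the feature that makes this work, and getting the witness bookkeeping right --- making sure each intermediate element lies in the correct summand and that the substitutions are reversible --- is the delicate part. I would organize this by adopting one fixed description (the $(a,b)$-description) throughout and presenting the common five-fold incidence system once, then reading off each of the three groupings as a specialization of it, which avoids redoing the elimination three separate times.
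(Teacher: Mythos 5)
Your treatment of parts (1) and (2) is essentially the paper's own proof: symmetry (a) is read off from the defining incidence system, symmetry (b) is obtained by matching witness substitutions across the equivalent descriptions of Lemma \lemref{gamma-equivs}, and para-associativity is proved by expanding the groupings in the $(a,b)$-description and exhibiting a change of variables identifying the resulting existentially quantified constraint systems, with the remaining equality available from symmetry (a). That part of your plan is sound and is exactly how the paper proceeds.

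The genuine gap is that you never prove the theorem's opening claim: that the set-theoretically defined $\Gamma$ \emph{extends the product map of Chapter 1}, i.e.\ coincides on $D(\Gamma) = D_L \cup D_R \cup D_M$ with the operator-defined values $L_{xayb}(z)$, $R_{aybz}(x)$, $M_{xabz}(y)$. The ``key observation'' you invoke --- that $\Gamma(x,a,y,b,z) = zy\inv x$ as a composition of linear relations --- holds only under the hypothesis $W = a \oplus b$, i.e.\ $a \top b$; but the Chapter-1 domains impose transversality on the pairs $(x,a)$, $(y,b)$ (for $D_L$), $(y,a)$, $(z,b)$ (for $D_R$), and $(x,a)$, $(z,b)$ together with $C_{ab} \neq \emptyset$ (for $D_M$), never on $(a,b)$ itself, so that observation says nothing about coincidence on $D(\Gamma)$. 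What is needed --- and what the paper supplies --- is a direct computation: for $(x,a,y,b,z) \in D_L$ one takes the $(y,b)$-description, writes each witness $\zeta \in z$ as $\zeta = \eta + \beta$ with $\eta = P_y^b\zeta$ and $\beta = P_b^y\zeta$, and reduces the incidence system to $\omega = (1 - P_a^x P_y^b)\zeta$, i.e.\ $\Gamma(x,a,y,b,z) = L_{xayb}(z)$; the case of $D_M$ is handled analogously with the $(x,z)$-description, which collapses to $\omega = -(P_x^a - P_b^z)\eta$, giving $M_{xabz}(y)$; and $D_R$ then follows from symmetry (a). Without this step the word ``extends'' in the statement is unproven, and the bridge that lets the Chapter-1 results (the torsor structure on $C_{ab}$, the affine structure on $C_a$, the pending claims in Theorems \thmref{geom_props} and \thmref{local}) be read as statements about the globally defined map is missing.
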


\begin{proof}
(1) The symmetry relation (a) is obvious from the definition of $\Gamma$.
Exchanging  $x$ and $a$  amounts in the
$(x,a)$-description
to exchanging simultaneously $z$ and $b$, hence the symmetry relation (b) follows.

For (2), we use the $(a,b)$-description: on the one hand,

\smallskip
\noindent$\Gamma\big(x,a,u,b,\Gamma(y,a,v,b,z)\big) =$
\begin{align*}
&= \Bigsetof{ \omega \in W}{
\begin{array}{c}
\exists \alpha \in a, \exists \beta \in b : \\
\omega + \alpha \in \Gamma(y,a,v,b,z), \omega + \alpha + \beta \in u,
\omega + \beta \in x
\end{array} } \\
&= \Bigsetof{ \omega \in W}{
\begin{array}{c}
\exists \alpha \in a, \exists \beta \in b,
\exists \alpha' \in a, \exists \beta' \in b : \\
\omega + \alpha + \beta \in u,
\omega + \beta \in x,
\omega + \alpha + \alpha' \in z, \\
\omega + \alpha + \alpha' + \beta' \in v,
\omega + \alpha + \beta' \in y
\end{array} }
\end{align*}
On the other hand,

\smallskip
\noindent$\Gamma\big(x,a,\Gamma(u,b,y,a,v),b,z\big) =$
\begin{align*}
&= \Bigsetof{ \omega \in W}{
\begin{array}{c}
\exists \alpha'' \in a, \exists \beta'' \in b : \\
\omega + \alpha'' \in z, \omega + \alpha'' + \beta'' \in \Gamma(u,b,y,a,v),
\omega + \beta'' \in x
\end{array} } \\
&= \Bigsetof{ \omega \in W}{
\begin{array}{c}
\exists \alpha'' \in a, \exists \beta'' \in b,
\exists \alpha''' \in a, \exists \beta''' \in b : \\
\omega + \alpha'' \in z,\omega + \beta'' \in x,
\omega + \alpha'' + \beta'' + \alpha''' \in u, \\
\omega + \alpha'' + \beta'' + \beta''' \in v,
\omega + \alpha ''+ \beta'' + \alpha''' + \beta''' \in y
\end{array} }
\end{align*}
Via the change of variables
$\alpha''=\alpha + \alpha'$,
$\alpha''' = \alpha'$,
$\beta''=\beta$,
$\beta'''=\beta$, we see that these two subspaces of $W$ are the same.
The remaining equality is equivalent to the one just proved via
the symmetry relation (a).

Next, we show that the new map $\Gamma$ coincides with the old one
on $D(\Gamma)$.
Let us assume that $(x,a,y,b,z) \in D_L$, so $x \top a$ and $y \top b$.
We use the $(y,b)$-description and let
$\zeta:=\eta + \beta$, whence $\eta = P_y^b \zeta$ and
$\beta = P_b^y \zeta$. We get
\begin{align*}
\Gamma(x,a,y,b,z) &=
\Bigsetof{ \omega \in W}{\exists \eta \in y, \exists \beta \in b :
\omega - \eta - \beta  \in   a,
\beta + \eta \in z ,
\omega - \beta \in x}  \\
&=
\Bigsetof{ \omega \in W}{\exists \zeta \in z :
\omega - P_b^y(\zeta) \in x,
\omega - \zeta \in  a}  \\
&=
\Bigsetof{ \omega \in W}{\exists \zeta \in z :
P^a_x(\omega - \zeta)=0,
P^a_x(\omega - P_b^y(\zeta)) = \omega - P_b^y(\zeta)}  \\
&=
\Bigsetof{ \omega \in W}{\exists \zeta \in z :
P_x^a \zeta = P_x^a \omega,
\omega = P_b^y \zeta + P^a_x \omega - P_x^a P_b^y \zeta } \\
&=
\Bigsetof{ \omega \in W}{\exists \zeta \in z :
\omega = (P_b^y  + P^a_x  - P_x^a P_b^y) \zeta}
\end{align*}
and a straightforward calculation shows that
\[
P_b^y  + P^a_x  - P_x^a P_b^y = 1 - P_a^x P_y^b = L_{xayb}
\]
so that $\Gamma(x,a,y,b,z) =L_{xayb}(z)$.
This proves that the old and new definitions of $\Gamma$ coincide on
$D_L$, and hence also on $D_R$ by the symmetry relation.
Now we show that the new map $\Gamma$ coincides with the old one on
$D_M$: assume $a \top x$ and $b \top z$ and use the $(x,z)$-description;
let $\eta:=\zeta - \omega + \xi$ and observe that
$P^a_x \eta = P^a_x \xi = \xi$ (since $\zeta - \omega \in a$), and
similarly $P^b_z \eta = \zeta$, whence
$\omega = \zeta - \eta + \xi = (P_z^b - 1 + P_x^a) \eta$,
and thus
\begin{align*}
\Gamma(x,a,y,b,z) &=
\Bigsetof{ \omega \in W}{\exists \xi \in x, \exists \zeta \in z :
\zeta - \omega  \in   a,
\zeta - \omega+ \xi  \in  y ,
\omega- \xi \in  b}  \\
&=
\Bigsetof{ \omega \in W}{\exists \eta \in y :
\omega =(P_z^b - 1 + P_x^a) \eta}\,,
\end{align*}
that is, $\omega = - M_{xabz} \eta$,
and hence $\Gamma(x,a,y,b,z)=M_{xabz}(y)$.
\end{proof}

\subsection{Diagonal values}

We call {\em diagonal values} the values taken by $\Gamma$ on the
subset of $\cX^5$ where at least two of the
five variables $x,a,y,b,z$ take the same value.
There are two different kinds of behavior on such diagonals:
for the diagonal $a=b$ (or, equivalently, $x=z$), we still have
a rich algebraic theory which is equivalent to the {\em Jordan part}
of our associative products; this topic is left for subsequent work
(cf.\ Chapter 4). The three remaining diagonals ($x=y$, resp.\
$a=z$, resp.\ $b=z$) have an entirely different behavior: the algebraic
operation $\Gamma$ restricts in these cases to {\em lattice theoretic
operations}, that is, can be expressed by intersections and sums
of subspaces. We will use the
lattice theoretic notation $x \land y = x \cap y$ and
$x \lor y = x + y$. It is remarkable that two important aspects
of projective geometry (the lattice theoretic and the Jordan theoretic)
arise as a sort of ``contraction'' of the full map $\Gamma$, or, put
differently, that they have a common ``deformation'', given by
$\Gamma$.

\begin{theorem}
\label{lattice}
The map $\Gamma: \cX^5 \to \cX$  takes the following diagonal values:
\begin{enumerate}[label=\emph{(}\arabic*\emph{)},leftmargin=*]
\item values on the ``diagonal $x=y$'': for all $(x,a,b,z) \in \cX^4$,
$$
\Gamma(x,a,x,b,z)= 
(z \lor (x \land a)) \land (b \lor x).
$$
In particular,
we get the following ``subdiagonal values'':
for all $x,a,y,b,z$,
\begin{enumerate}[label=\emph{(}\roman*\emph{)},leftmargin=*]
\item
subdiagonal $x=y=z$:  $\Gamma(x,a,x,b,x) = x$
(law  $(xxx)=x$ in $\cX_{ab}$),
\item
subdiagonal $x=y=a$:
 $\Gamma(x,x,x,b,z) = (z \lor x) \land (b \lor x)$
\item
subdiagonal $x=y=a$ and $b=z$:
 $\Gamma(x,x,x,z,z) = z \lor x $
\item
subdiagonal $x=y=b$:
$\Gamma(x,a,x,x,z)=
(z \lor (x \land a)) \land x$.
\item
subdiagonal $x=y=b$ and $a=z$:
 $\Gamma(x,a,x,x,a) = a \land x$
\item
subdiagonal $x=y$, $a=z$:
$\Gamma(x,a,x,b,a) = a \land (b \lor x)$
\item
subdiagonal $x=y$, $a=b$: $\Gamma(x,a,x,a,z)=(z \lor (x \land a)) \land (x \lor a)$
\item
subdiagonal $x=y$, $z=b$: $\Gamma(x,a,x,z,z)=z \lor (x \land a)$
\end{enumerate}

\item
diagonal $a=z$: for all $(x,a,y,b) \in  \cX^4$,
$$
\Gamma(x,a,y,b,a)= a \land (b \lor (x \land (y \lor a)))
$$
In particular, on the subdiagonal $x=z=b$, we have, for all
$x,a,y \in \cX$,
$$
\Gamma(x,a,y,x,a)=a \land x.
$$

\item
diagonal $b=z$: for all $(x,a,y,b) \in \cX^4$,
$$
\Gamma(x,a,y,b,b)= b \lor (a \land (x \lor (y \land b)))
$$
In particular, on the subdiagonal $b=z$, $x=a$, we have, for all
$a,y,b \in \cX$,
$$
\Gamma(a,a,y,b,b)=b \lor a ,
$$
and on $a=b=az$:  for all $x,a,y$, $\Gamma(x,a,y,a,a)=a$.
\end{enumerate}
\end{theorem}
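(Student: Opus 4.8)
The plan is to compute each diagonal value directly from the set-theoretic description of $\Gamma$ established in Lemma~\lemref{gamma-equivs}, reducing the trilinear relational product to lattice operations by exploiting the coincidences forced when two arguments agree. The main statement to prove is the formula
\[
\Gamma(x,a,y,b,z)= b \lor (a \land (x \lor (y \land b)))
\]
on the diagonal $b=z$, together with its subdiagonal specializations. Because the three diagonals ($x=y$, $a=z$, $b=z$) are related by the Klein $4$-group symmetries of Theorem~\ref{MainTheorem}(1), I would first prove one master formula and then obtain the others by applying the relations $\Gamma(x,a,y,b,z)=\Gamma(z,b,y,a,x)$ and $\Gamma(x,a,y,b,z)=\Gamma(a,x,y,z,b)$. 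For instance, the $a=z$ diagonal formula should follow from the $b=z$ formula (or the $x=y$ formula) by a single application of symmetry (b) that swaps the roles of $(a,z)$ with $(b,x)$, so only \emph{one} genuine computation is needed.

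For the master computation I would set $z=b$ and pick the relational description of $\Gamma$ from Lemma~\lemref{gamma-equivs} that is best adapted to this substitution --- plausibly the $(a,b)$-description
\[
\Gamma(x,a,y,b,z)=
\Bigsetof{ \omega \in W}{\exists \beta \in b, \exists \alpha \in a :
\omega - \alpha \in z,\ \omega - \alpha - \beta  \in  y ,\ \omega - \beta \in  x},
\]
since here the existential witness in $z=b$ can be absorbed directly into the $b$-component. First I would substitute $z=b$ and observe that the condition $\omega-\alpha\in b$ can be solved by choosing the free $b$-element, decoupling one constraint; this should collapse two of the three membership conditions into a single join/meet. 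Then I would carefully track which elements $\omega\in W$ survive: the outer $b\lor(\cdots)$ reflects that any $b$-component is freely adjustable, while the inner $a\land(x\lor(y\land b))$ encodes the residual constraints after projecting out that freedom. Throughout I would translate each ``$\exists$'' into a sum of subspaces and each simultaneous membership into an intersection, using only $\land=\cap$ and $\lor=+$.

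The subdiagonal values are then corollaries obtained by further specialization and by invoking the basic lattice identities (absorption $a\land(a\lor c)=a$ and $b\lor(b\land c)=b$, and the triviality $a\land a=a$). For example, putting $x=a$ in the $b=z$ formula gives $b\lor(a\land(a\lor(y\land b)))=b\lor a$ by absorption, and putting $a=b$ gives $\Gamma(x,a,y,a,a)=a\lor(a\land(\cdots))=a$. These reductions are purely formal once the master formula is in hand.

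The hard part will be the bookkeeping in the master computation: converting the quantified relational description into a lattice expression requires showing both inclusions (that every $\omega$ satisfying the constraints lies in the claimed subspace, \emph{and} that every element of the claimed subspace arises from a valid choice of witnesses $\alpha,\beta$). The inclusion ``$\subseteq$'' is a routine projection argument, but the reverse inclusion ``$\supseteq$'' --- exhibiting explicit witnesses $\alpha\in a$, $\beta\in b$ realizing a given $\omega$ in $b\lor(a\land(x\lor(y\land b)))$ --- is where the decomposition $W=a\oplus b$ is \emph{not} available (here $a,b$ are arbitrary, possibly non-transversal), so one must argue entirely within the lattice of submodules without projectors. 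I would handle this by writing a general element of the target subspace as $\omega=\beta_0+\mu$ with $\beta_0\in b$ and $\mu\in a\land(x\lor(y\land b))$, then unpacking $\mu\in x\lor(y\land b)$ as $\mu=\xi+\nu$ with $\xi\in x$, $\nu\in y\land b$, and verifying that the choices $\alpha:=\mu$, $\beta:=\beta_0-\nu$ (or a similar explicit assignment) satisfy all three membership conditions. Getting these witness assignments exactly right, with the correct signs inherited from the variable changes in Lemma~\lemref{gamma-equivs}, is the only real obstacle.
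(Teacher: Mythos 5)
Your master computation for the diagonal $b=z$ is sound: the two-inclusion argument with explicit witnesses (in your notation, $\alpha:=\mu$, $\beta:=\beta_0+\nu$, $\eta:=-\nu$ works) is essentially the paper's own proof of part (3), and the absorption-law derivations of the subdiagonals of (3) are correct. The genuine gap is the claimed reduction of parts (1) and (2) to this single master formula via the Klein $4$-group. The symmetries fix $y$ and act on the positions of $(x,a,b,z)$ by the three double transpositions $x\leftrightarrow z,\,a\leftrightarrow b$ (symmetry (a)), $x\leftrightarrow a,\,b\leftrightarrow z$ (symmetry (b)), and $x\leftrightarrow b,\,a\leftrightarrow z$ (their composition). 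Under this action the two-element sets of arguments split into four orbits: the pairs containing $y$; the orbit $\{x,a\}\leftrightarrow\{b,z\}$; the orbit $\{x,b\}\leftrightarrow\{a,z\}$; and the orbit $\{x,z\}\leftrightarrow\{a,b\}$. The three diagonals of the theorem --- $x=y$, $a=z$, $b=z$ --- lie in three \emph{different} orbits. Concretely: applying (b) to $\Gamma(x,a,y,b,b)$ gives $\Gamma(a,x,y,b,b)$, which is again the $b=z$ diagonal, while applying (a) or the composition gives $\Gamma(b,b,y,a,x)$ resp.\ $\Gamma(b,b,y,x,a)$, which is the (unstated) $x=a$ diagonal; no element of the Klein group turns the $b=z$ diagonal into the $a=z$ diagonal or into the $x=y$ diagonal. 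So from your master formula you can deduce only the value on the $x=a$ diagonal --- never parts (1) and (2), nor any of their subdiagonal values (i)--(viii), which include the idempotent law $(xxx)=x$ in $\cX_{ab}$.

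The fix is to do what the paper does: a separate direct computation for each orbit. The paper proves (1) (diagonal $x=y$) and (2) (diagonal $a=z$) by exactly the kind of two-inclusion witness argument you outline for $b=z$, and it calls (3) ``dual'' to (2) in the sense of a parallel computation with the roles of $\lor$ and $\land$ interchanged --- not in the sense of being obtainable by an internal symmetry of $\Gamma$. Your witness technique transfers to those two computations without difficulty, so the damage is local; but as written, the proposal establishes only one of the three formulas.
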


\begin{proof}
In the following proof, in order to avoid unnecessary repetitions,
 it is always understood that $\alpha \in a$, $\xi \in x$, $\beta \in b$,
$\eta \in y$, $\zeta \in z$.
In all three items, the determination
 of the ``subdiagonal values'' is a  straightforward
consequence, using  the
absorption laws $u \lor (u \land v)=u$, $u \land (u \lor v)=u$.

Now we prove (1) (diagonal $x=y$).
Let $\omega \in \Gamma(x,a,x,b,z)$, then
$\omega = \xi + \beta$, hence $\omega \in (x \lor b)$,
and
$\omega = \eta + \xi + \zeta$ with
$v:=\omega - \zeta = \eta + \xi \in x$ (since $x=y$).
On the other hand,
$v=\omega - \zeta = \alpha \in a$, whence
$\omega = v + \zeta$ with $v \in (x \land a)$, proving one inclusion.

Conversely, let $\omega \in (z \lor (x \land a)) \land (b \lor x)$.
Then
$\omega = \beta + \xi = \alpha + \zeta$
with $\alpha \in (x \land a)$.
Let $\eta := \xi - \alpha$. Then $\eta \in x$, and
$\omega = \xi + \beta = \eta + \alpha + \beta$, hence
$\omega \in\Gamma(x,a,x,b,z)$.

Next we prove
(2) (diagonal $a=z$).
Let $\omega \in \Gamma(x,a,y,b,a)$, then
$\omega = \zeta + \alpha$ with $\zeta,\alpha \in z=a$, whence
$\omega \in a$.
Moreover, $\omega = \xi + \beta = \eta + \alpha + \beta$, with
$\eta + \alpha = \xi \in x$ and
$\eta + \alpha \in y \lor a$, whence
$\omega \in b \lor (x \land (y \lor a))$.

Conversely, let $\omega \in a \land (b \lor (x \land (y \lor a)))$.
Then $\omega \in b \lor (x \land (y \lor a))$, that is,
$\omega = \beta + (\eta + \alpha)$ with $\xi:=\eta + \alpha \in x$.
Letting $\zeta := \omega - \alpha \in a$ (here we use that $\omega \in a$),
we have
$\omega = \zeta + \alpha$, proving that
$\omega \in \Gamma(x,a,y,b,a)$.

The proof for (3) (diagonal $z=b$) is ``dual'' to the preceding one
and will be left to the reader.
\end{proof}

\begin{remark}
By arguments of the same kind as above, one can show that the
diagonal value for $x=y$ (part (1)) admits also another, kind of
``dual'', expression:
\begin{equation}
\Gamma(x,a,x,b,z)=
(z \land (x \lor b)) \lor (a \land x).
\label{modular'}
\end{equation}
The equality of these two expressions is equivalent to the
{\em modular law}
\begin{equation}
\Gamma(x,a,x,x,z)=
(z \land x) \lor (a \land x) = ((z \land x) \lor a) \land x.
\label{modular}
\end{equation}

It is known \cite{PR09} that any (finitely based) variety of lattices
can be axiomatized by a single \emph{quaternary} operation $q(\cdot,\cdot,\cdot,\cdot)$
given in terms of
the lattice operations by $q(x,b,z,a) = (z \land (x \lor b)) \lor (a \land x)$. 
That is, one may start with a quarternary operation $q$ satisfying certain identities 
(which we omit), define $x\lor y = q(y,x,x,y)$ and $x\land y = q(y,y,x,x)$, and the
resulting structure will be a lattice. From the preceding paragraph, we see that in 
our setting, $q(x,b,z,a) = \Gamma(x,a,x,b,z)$. Thus the quaternary approach to lattices
emerges from the present theory in a completely natural way.
\end{remark}

\begin{corollary}\label{lattice'}
\begin{enumerate}[label=\emph{(}\arabic*\emph{)},leftmargin=*]
\item
If $b \lor x = W$ and $a \land x = 0$, then
$\Gamma(x,a,x,b,z) 
= z$.
\item
If $a \lor y =W$ and $b \lor x = W$, then
$\Gamma(x,a,y,b,a)=a$.
\item
If $x \land a=0$ and $y\land b = 0$, then
$\Gamma(x,a,y,b,b)=b$.
\end{enumerate}
\end{corollary}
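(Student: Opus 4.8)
The plan is to obtain all three statements as immediate specializations of the diagonal-value formulas established in Theorem \ref{lattice}, using nothing beyond the fact that in the submodule lattice $\cX = \Gras_\bB(W)$ the zero submodule $0$ is the bottom element and $W$ itself is the top element. Consequently the identities $u \lor 0 = u$, $u \land W = u$, $u \lor W = W$, and $u \land 0 = 0$ hold for every $u \in \cX$, and these are the only lattice facts required.

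For part (1), I would start from the formula $\Gamma(x,a,x,b,z) = (z \lor (x \land a)) \land (b \lor x)$ of Theorem \ref{lattice}(1). Substituting the two hypotheses $x \land a = 0$ and $b \lor x = W$ turns the right-hand side into $(z \lor 0) \land W = z \land W = z$, which is exactly the claim. For part (2), I would invoke $\Gamma(x,a,y,b,a) = a \land (b \lor (x \land (y \lor a)))$ from Theorem \ref{lattice}(2): the hypothesis $a \lor y = W$ gives $y \lor a = W$, hence $x \land (y \lor a) = x \land W = x$, and feeding this back together with $b \lor x = W$ collapses the expression to $a \land W = a$.

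For part (3), I would use $\Gamma(x,a,y,b,b) = b \lor (a \land (x \lor (y \land b)))$ from Theorem \ref{lattice}(3). The hypothesis $y \land b = 0$ yields $x \lor (y \land b) = x \lor 0 = x$, and then the other hypothesis $x \land a = 0$ reduces the inner bracket to $a \land x = 0$, leaving $b \lor 0 = b$.

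Since each part is a single substitution followed by two lattice simplifications, I expect essentially no obstacle: all the genuine work has been front-loaded into Theorem \ref{lattice}. The only point requiring a moment's care is to check that the two hypotheses stated in each case correspond precisely to the two subterms that must be trivialized (one meet forced to $0$, one join forced to $W$, in the appropriate positions) in the relevant formula; once that matching is verified, the corollary is pure bookkeeping.
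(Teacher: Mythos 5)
Your proposal is correct and takes essentially the same route as the paper, whose proof simply states that the corollary follows as a straightforward consequence of Theorem \ref{lattice}; your explicit substitutions into the three diagonal-value formulas are exactly the intended bookkeeping. The only cosmetic difference is that the paper cites the absorption laws, whereas your simplifications actually use the bounded-lattice identities $u \lor 0 = u$ and $u \land W = u$, which is the more accurate description of what is needed here.
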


\begin{proof}
Straightforward consequences of the theorem,
again using the absorption laws.
\end{proof}

\subsection{Structural transformations and self-distributivity}

\emph{Homomorphisms} between sets with quintary product maps $\Gamma$,
$\Gamma'$ are defined in the usual way, and may serve to define the
category of Grassmannian geometries with their product maps $\Gamma$.
We call this the ``usual'' category.
There is another and often more useful way to turn them
into a category which we call ``structural'':

\begin{definition}
Let $W,W'$ be two right $\bB$-modules and $(\cX,\Gamma)$,
$(\cX',\Gamma')$ their Grassmannian geometries.
A \emph{structural} or \emph{adjoint pair of transformations between
$\cX$ and $\cX'$} is a pair of maps $f:\cX \to \cX'$,
$g:\cX' \to \cX$ such that, for all
$x,a,y,b,z  \in \cX$, $x',a',y',b',z' \in \cX'$,
\begin{align*}
f \big( \Gamma(x,g(a'),y,g(b'),z) \big) &= \Gamma'(f(x),a',f(y),b',f(z)),
\\
g \big( \Gamma'(x',f(a),y',f(b),z') \big)&= \Gamma(g(x'),a,g(y'),b,g(z'))\,.
\end{align*}
In other words, for fixed $a,b$, resp.\ $a',b'$, the restrictions
\[
f:\cX_{g(a'),g(b')} \to \cX_{a',b'}'\,, \quad
g:\cX_{f(a),f(b)}' \to \cX_{a,b}
\]
are homomorphisms of semitorsors. We will sometimes write $(f,f^t)$ for
a structural pair (although $g$ need not be uniquely determined by $f$).
\end{definition}

It is easily checked that the composition of  structural pairs gives
again a  structural pair, and  Grassmannian geometries with
structural pairs as morphisms form a category. Isomorphisms, and,
in  particular, the automorphism group of $(\cX,\Gamma)$, are
essentially
 the same in the usual and in the structural
 categories, but the endomorphism semigroups may be
very different. Roughly speaking, Grassmannian geometries tend to be
``simple objects'' in the usual category (hence morphisms tend to be
either trivial or injective), whereas they are far from being simple
in the structural category, so there are many morphisms.
One way of constructing such morphisms is via
ordinary $\bB$-linear maps $f:W \to W'$, which
induce maps between the corresponding
Grassmannians $\cX=\Gras(W)$ and $\cX'=\Gras(W')$:
\[
f_*:\cX \to \cX'; \, x \mapsto f(x), \qquad
f^*:\cX' \to \cX; \, y \mapsto f\inv(y).
\]
Note that, in general, these maps do not restrict to maps between
connected components (for instance,  $f_*$ and
$f^*$ do not restrict to \emph{everywhere} defined maps between projective
spaces $\bP W$ and $\bP W'$ if $f$ is not injective).
%
%
We will show that $(f_*,f^*)$ is an adjoint pair, as
a special case of the following result:

\begin{theorem}
Given a linear relation $\rr \subset W \oplus W'$, let
\begin{align*}
\rr_* : \cX \to \cX' ; &\quad x \mapsto \rr(x):=
\setof{ \omega' \in W'}{\exists \xi \in x : (\xi,\omega') \in \rr}\,, \\
\rr^* : \cX' \to \cX ; &\quad y \mapsto \rr^{-1}(y):=
\setof{ \omega \in W}{\exists \eta \in y : (\omega,\eta) \in \rr}\,.
\end{align*}
Then $(\rr_*,\rr^*)$ is a structural pair of transformations
between $\cX$ and $\cX'$.
\end{theorem}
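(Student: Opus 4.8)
We need to show that a linear relation $\rr \subset W \oplus W'$ induces a structural pair $(\rr_*, \rr^*)$, i.e. that the two intertwining identities in the definition hold. The key observation is that $\rr_*$ is just the image map "$x \mapsto \rr x$" (composition of relations) and $\rr^*$ is "$y \mapsto \rr^{-1} y$". Since $\Gamma$ was *defined* on all of $\cX^5$ via the relation-composition description $\Gamma(x,a,y,b,z) = zy^{-1}x$ read inside $W = a \oplus b$, the whole statement should reduce to the associativity and functoriality of relation composition — exactly the material developed in §2.1–§2.3. The plan is therefore to work entirely with the set-theoretic descriptions of $\Gamma$ from Lemma 2.7, never with projectors.

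**The approach.** I would verify the first identity
\[
\rr_*\bigl( \Gamma(x,\rr^*(a'),y,\rr^*(b'),z) \bigr) = \Gamma'\bigl(\rr_*(x),a',\rr_*(y),b',\rr_*(z)\bigr);
\]
the second follows by the symmetric argument with $\rr$ replaced by $\rr^{-1}$ (note $\rr^{-1} \subset W' \oplus W$ and $(\rr^{-1})_* = \rr^*$, $(\rr^{-1})^* = \rr_*$), so only one computation is genuinely needed. I would pick one of the descriptions in Lemma~\lemref{gamma-equivs} — most likely the original defining description, whose conditions are
\[
\omega = \zeta + \alpha = \alpha + \eta + \beta = \xi + \beta,
\quad \xi \in x,\ \alpha \in \rr^*(a'),\ \eta \in y,\ \beta \in \rr^*(b'),\ \zeta \in z.
\]
An element of the left-hand side is some $\omega' \in W'$ with $(\omega,\omega') \in \rr$ for such an $\omega$. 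I would then \emph{push every witness through} $\rr$: set $\xi' , \eta', \zeta'$ to be $\rr$-images of $\xi,\eta,\zeta$, and for $\alpha \in \rr^*(a')$ choose $\alpha' \in a'$ with $(\alpha,\alpha') \in \rr$, similarly $\beta'$. Because $\rr$ is a \emph{linear} relation (a submodule of $W \oplus W'$), the three linear defining equations in $W$ transport to the same three linear equations among $\omega',\xi',\eta',\zeta',\alpha',\beta'$ in $W'$, witnessing $\omega' \in \Gamma'(\rr_*(x),a',\rr_*(y),b',\rr_*(z))$. The reverse inclusion is the same argument run backwards, pulling witnesses on the $W'$ side back along $\rr$.

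**The main obstacle.** The subtle point — and the only place real care is needed — is that relation composition involves existential quantifiers, so "pushing witnesses through $\rr$" is not literally an equality of the \emph{same} witnesses but a matching of quantifiers. Concretely, when I pull an element $\omega' \in \Gamma'(\dots)$ back, I must produce a single $\omega \in W$ with $(\omega,\omega')\in\rr$ that \emph{simultaneously} satisfies all the $\Gamma$-conditions; the freedom to add kernel elements of $\rr$ (i.e. $\omega$ is determined only up to $\{w : (w,0)\in\rr\}$) must be absorbed into the existentially quantified $\alpha,\beta$. This is exactly the bookkeeping that makes relation composition associative (equation~\eqref{assoc}), so the cleanest route is to phrase the entire argument as an instance of that associativity rather than re-deriving it by hand. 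I expect the verification to be routine once the correct description from Lemma~\lemref{gamma-equivs} is chosen so that the quantified variables $\alpha',\beta'$ on the $W'$ side line up with the complementary-summand structure; choosing a description with the wrong pair of "free" variables would force signs and extra case analysis. I would also record at the outset that $(f_*,f^*)$ for an ordinary linear map $f:W\to W'$ is the special case $\rr = \{(w,f(w))\}$, so the theorem indeed subsumes the earlier claim.
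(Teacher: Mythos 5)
Your proposal is correct and follows essentially the same route as the paper: the paper likewise expands both sides of the first structurality identity using the $(a,b)$-description (your chosen description is just its sign-flipped variant) and matches the two sets by the explicit change of variables $\zeta=\omega+\alpha$, $\xi=\omega+\beta$, $\eta=\omega+\alpha+\beta$, respectively $\omega=\zeta+\xi-\eta$, $\alpha=\eta-\xi$, $\beta=\eta-\zeta$ — which is precisely your point that the images of $\xi,\eta,\zeta$ must be \emph{defined} from $\omega',\alpha',\beta'$ by linearity of $\rr$, with the indeterminacy absorbed into the quantified variables, rather than chosen arbitrarily. Your explicit reduction of the second identity to the first via $\rr\inv$ (using $(\rr\inv)_*=\rr^*$ and $(\rr\inv)^*=\rr_*$) is left implicit in the paper but is exactly the intended reading.
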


\begin{proof}
Using the $(a,b)$-description, on the one hand,

\smallskip
\noindent $\rr_* \Gamma(x,\rr^* a',y,\rr^*b',z) =$
\begin{align*}
&= \Bigsetof{ \omega' \in W'}{\exists \omega \in \Gamma(x,\rr^* a',y,\rr^*b',z) :
(\omega,\omega') \in \rr} \\
&=
\Bigsetof{\omega' \in W'}{
\begin{array}{c}
\exists \omega \in W,
\exists \alpha \in \rr^* a', \exists \beta \in \rr^* b' : \\
(\omega,\omega') \in \rr,
\omega + \alpha \in z,
\omega + \beta \in x,
\omega + \alpha + \beta \in y
\end{array}} \\
&=
\Bigsetof{ \omega' \in W'}{
\begin{array}{c}
\exists \omega \in W,
\exists \alpha' \in  a', \exists \alpha \in W, \exists \beta' \in  b',
\exists \beta \in W : \\
(\omega,\omega') \in \rr, (\alpha,\alpha') \in \rr, (\beta,\beta') \in \rr,\\
\omega + \alpha \in z,
\omega + \beta \in x,
\omega + \alpha + \beta \in y
\end{array}}\,,
\end{align*}
and on the other hand,

\smallskip
\noindent $\Gamma(\rr_*x,a',\rr_*y,b',\rr_*z) =$
\begin{align*}
&= \Bigsetof{ \omega' \in W'}{
\begin{array}{c}
\exists  \alpha'' \in  a', \exists \beta'' \in b' :\\
\omega' + \alpha'' \in \rr^*z,
\omega' + \beta'' \in \rr^*x,
\omega' + \alpha'' + \beta'' \in \rr^*y
\end{array}} \\
&= \Bigsetof{ \omega' \in W'}{
\begin{array}{c}
\exists  \alpha'' \in  a', \exists \beta'' \in b', \exists
\zeta \in z, \exists \xi \in x, \exists \eta \in y : \\
(\zeta,\omega' + \alpha'') \in \rr,
(\xi,\omega'+ \beta'') \in \rr,
(\eta,\omega' + \alpha'' + \beta '') \in \rr
\end{array} }\,.
\end{align*}
The subspaces of $W$ determined by these two conditions are the
same, as is seen by the change of variables
\[
\zeta = \omega + \alpha, \, \,
\xi = \omega + \beta, \, \,
\eta = \omega + \alpha + \beta, \, \,
\alpha''=\alpha', \, \,
\beta''=\beta'
\]
in one direction, and
\[
\omega = \eta - \zeta - \xi , \alpha''=\alpha', \, \,
\beta''=\beta', \, \,
\alpha = \zeta - \omega  = \eta - \xi , \, \,
\beta = \xi - \omega  = \eta - \zeta
\]
in the other, and using that $\rr$ is a linear subspace.
\end{proof}

\begin{remark}
The proof shows that the same result would hold if we had
formulated the structurality property with  respect to
another ``admissible'' pair of variables instead of $(a,b)$,
for instance $(y,b)$ or $(x,z)$, by using the corresponding description.
However, we prefer to distinguish the pair formed by
the second and fourth variable
in order to have the interpretation of structural transformations in
terms of torsor homomorphisms, for fixed $(a,b)$.
\end{remark}

\begin{remark}
The construction from the theorem is functorial. In particular, the semigroup
of linear relations on $W \times W$ (to be more precise: a quotient with
respect to scalars) acts by structural pairs on $\cX$.
\end{remark}

\begin{theorem} \label{structure}
We define \emph{operators of left-, middle- and right multiplication
on $\cX$} by
$$
L_{xayb}(z):=R_{aybz}(x):=M_{xabz}(y):=\Gamma(x,a,y,b,z).
$$
Then, for all $x,a,y,b,z \in \cX$, the pairs
\[
(L_{xayb},L_{yaxb}), \quad (M_{xabz},M_{zabx}), \quad
(R_{aybz},R_{azby})
\]
are structural transformations of the Grassmannian geometry $\cX$.
\end{theorem}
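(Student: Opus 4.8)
The plan is to realize each of the three partial maps $z\mapsto\Gamma(x,a,y,b,z)$, $y\mapsto\Gamma(x,a,y,b,z)$ and $x\mapsto\Gamma(x,a,y,b,z)$ (with the four remaining arguments held fixed) as the image map $\rr_*$ of a single linear relation $\rr\subset W\oplus W$, arranged so that the partner operator in each pair is \emph{precisely} the reverse map $\rr^*$ of the \emph{same} relation. Once this is achieved, the statement is an immediate consequence of the preceding theorem, which asserts that $(\rr_*,\rr^*)$ is a structural pair for \emph{every} linear relation $\rr$ (applied here with $W'=W$, $\cX'=\cX$).

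I would treat left multiplication first. Fix $x,a,y,b$ and read the relation off directly from the $(a,b)$-description of $\Gamma$ (the one established just before Lemma \lemref{gamma-equivs}),
\[
\Gamma(x,a,y,b,z)=\setof{\omega\in W}{\exists\,\alpha\in a,\ \beta\in b:\ \omega+\alpha\in z,\ \omega+\beta\in x,\ \omega+\alpha+\beta\in y}.
\]
Viewing $\zeta:=\omega+\alpha$ as the element of $z$ witnessing membership, I would set
\[
\rr:=\setof{(\zeta,\omega)\in W\oplus W}{\zeta-\omega\in a,\ \exists\,\beta\in b:\ \omega+\beta\in x,\ \zeta+\beta\in y}.
\]
This is visibly a submodule of $W\oplus W$, and unwinding the definition of $\rr_*$ returns the display above, so $\rr_*(z)=\Gamma(x,a,y,b,z)=L_{xayb}(z)$ at once. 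The crux is then the pullback: a short change of variables ($\alpha\mapsto-\alpha$, together with the obvious relabelling of $\beta$) turns the defining conditions of $\rr^*(z)=\{\omega\mid\exists\,\eta\in z:(\omega,\eta)\in\rr\}$ into the $(a,b)$-description of $\Gamma(y,a,x,b,z)$, whence $\rr^*=L_{yaxb}$. The preceding theorem now gives that $(L_{xayb},L_{yaxb})=(\rr_*,\rr^*)$ is structural.

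The middle and right cases are handled by the identical device. For $M$ one fixes $x,a,b,z$ and reads $\rr$ off the same description, this time taking $\eta:=\omega+\alpha+\beta\in y$ as the witnessing element; one finds $\rr_*=M_{xabz}$ and, after the analogous change of variables, $\rr^*=M_{zabx}$. For $R$ one fixes $a,y,b,z$, takes $\xi:=\omega+\beta\in x$ as witness, and obtains $\rr_*=R_{aybz}$ and $\rr^*=R_{azby}$. Alternatively, both can be deduced from the $L$ case using the Klein $4$-group symmetries (a) and (b) of Theorem \ref{MainTheorem}, which permute the slot carrying the running variable and thereby interchange the three kinds of multiplication operator; one should, however, check that these symmetries transport structurality (cf.\ the Remark allowing other admissible pairs of variables), so doing all three constructions explicitly is the safer route.

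The only genuine work, and the main obstacle, is verifying that the partner map is exactly the reverse relation $\rr^*$ and not some other transformation. This is precisely where the sign bookkeeping matters: as noted after Lemma \lemref{gamma-equivs}, the "mixed" pairs of variables cannot be made sign-free, so one must confirm that swapping the two arguments inside each operator corresponds to reversing the incidence relation defining $\rr$. This is a routine but attention-demanding change of variables, entirely parallel to the one carried out in the proof of the preceding theorem, and it uses only that $\rr$ is a linear subspace of $W\oplus W$.
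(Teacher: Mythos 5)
Your proof is correct and follows essentially the same route as the paper: both realize each multiplication operator together with its partner as the pair $(\rr_*,\rr^*)$ of a single linear relation $\rr \subset W \oplus W$ and then invoke the preceding theorem that any such pair is structural. The only (cosmetic) difference is that you read the relation off the $(a,b)$-description of $\Gamma$, which forces a small sign change of variables, whereas the paper defines the relation via the $(x,z)$-description and identifies the pullback directly from the $(y,z)$-description with no sign bookkeeping at all.
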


\begin{proof}
Let $\bl_{x,a,y,b} \subset W \oplus W$ be the linear relation defined by
\[
\bl_{x,a,y,b}:=\setof{ (\zeta,\omega)\in W \oplus W}
{\exists \xi \in x : \omega + \zeta \in a, \omega + \zeta +\xi \in y,
\omega + \xi \in b}.
\]
Then it follows immediately by using the $(x,z)$-description that
\[
(\bl_{x,a,y,b})_*(z)= \setof{ \omega \in W}{\exists \zeta \in z :
(\zeta,\omega) \in \bl_{x,a,y,b} } = \Gamma(x,a,y,b,z) = L_{xayb}(z).
\]
On the other hand,
\begin{align*}
(\bl_{x,a,y,b})^*(z) &= \setof{  \omega \in W}{\exists \zeta \in z:
(\omega,\zeta) \in \bl_{x,a,y,b}} \\
&= \Bigsetof{  \omega \in W}{\exists \zeta \in z,\exists \xi \in x :
 \omega + \zeta \in a, \omega + \zeta +\xi \in y,
\zeta +\xi \in b } \\
& = \Gamma(y,a,x,b,z) = L_{yaxb}(z)\,,
\end{align*}
where the third equality follows by using the $(y,z)$-description with
permuted variables.
This proves that $(L_{xayb},L_{yaxb})$ is a structural pair; the claim
for right multiplications is just an equivalent version of this, and
the claim for middle multiplications is proved in the same way as above.
\end{proof}

\begin{remark}
We have proved that, in terms of inverses of linear  relations,
\begin{equation}
(\bl_{x,a,y,b})^{-1}= \bl_{y,a,x,b}.
\end{equation}
If $x \top a$ and $y \top b$, then $\bl_{xaby}$ is the graph of the linear
operator $L_{xayb} \in \End(W)$; for $x,y \in U_{ab}$, this operator is
invertible and
the preceding formula holds in the sense of an operator equation.
\end{remark}

\begin{corollary}
The multiplication map satisfies the following ``self-distributivity''
identities:
\begin{multline*}
\Gamma\Bigl(x,a,
\Gamma\bigl(u, \Gamma(a,z,c,x,b),v,\Gamma(a,z,d,x,b),w \bigr),
b,z \Bigr) = \\
\Gamma\Bigl(
\Gamma(x,a,u,b,z),c,\Gamma(x,a,v,b,z),d,\Gamma(x,a,w,b,z) \Bigr)
\end{multline*}
\begin{multline*}
\Gamma\Bigl(x,a,y,b,
\Gamma\big(u, \Gamma(y,a,x,b,c),v,\Gamma(y,a,x,b,d),w \bigr)
\Bigr) = \\
\Gamma\Bigl(
\Gamma(x,a,y,b,u),c,\Gamma(x,a,y,b,v),d,\Gamma(x,a,y,b,w) \Bigr)
\end{multline*}
\end{corollary}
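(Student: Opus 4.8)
The plan is to recognize that both displayed identities are exactly the first structurality equation from the definition of a structural pair, specialized to the multiplication-operator pairs produced by Theorem~\ref{structure}. Recall that a structural pair $(f,g)$ of self-maps of $\cX$ satisfies
\[
f\bigl(\Gamma(u,g(c),v,g(d),w)\bigr) = \Gamma\bigl(f(u),c,f(v),d,f(w)\bigr)
\]
for all $u,v,w,c,d \in \cX$. Since here $W'=W$, every pair named in Theorem~\ref{structure} is such a pair of self-maps of $\cX$, so it suffices to substitute the explicit operator formulas $L_{xayb}(\cdot)=\Gamma(x,a,y,b,\cdot)$ and $M_{xabz}(\cdot)=\Gamma(x,a,\cdot,b,z)$ and read off the two claims.

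For the second identity I would take the left-multiplication pair $(L_{xayb},L_{yaxb})$, so that $f=L_{xayb}$ acts in the last slot, $f(\cdot)=\Gamma(x,a,y,b,\cdot)$, and $g=L_{yaxb}$, i.e.\ $g(c)=\Gamma(y,a,x,b,c)$. Feeding these into the structurality equation with the arguments $(u,c,v,d,w)$ produces
\[
\Gamma\bigl(x,a,y,b,\Gamma(u,\Gamma(y,a,x,b,c),v,\Gamma(y,a,x,b,d),w)\bigr)
= \Gamma\bigl(\Gamma(x,a,y,b,u),c,\Gamma(x,a,y,b,v),d,\Gamma(x,a,y,b,w)\bigr),
\]
which is the second displayed identity verbatim, with no further manipulation required.

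For the first identity I would take the middle-multiplication pair $(M_{xabz},M_{zabx})$, so that $f=M_{xabz}$ acts in the middle slot, $f(\cdot)=\Gamma(x,a,\cdot,b,z)$, and $g=M_{zabx}$, i.e.\ $g(c)=\Gamma(z,a,c,b,x)$. The right-hand side of the structurality equation then reads $\Gamma(\Gamma(x,a,u,b,z),c,\Gamma(x,a,v,b,z),d,\Gamma(x,a,w,b,z))$, which already matches the target. The only discrepancy is on the left, where the inner $g$-terms emerge as $\Gamma(z,a,c,b,x)$ and $\Gamma(z,a,d,b,x)$ rather than the stated $\Gamma(a,z,c,x,b)$ and $\Gamma(a,z,d,x,b)$; these coincide by the Klein symmetry relation (b) of Theorem~\ref{MainTheorem}, namely $\Gamma(z,a,c,b,x)=\Gamma(a,z,c,x,b)$, which converts the expression into the first displayed identity.

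I do not expect any genuine obstacle here: all the mathematical content is already carried by Theorem~\ref{structure}, and what remains is the bookkeeping of matching slot positions and, in the middle case, a single application of the symmetry rewrite. The one point demanding a moment of care is choosing the correct member of each named pair as $f$ versus $g$ and confirming, through the operator conventions $L_{xayb}(z)=M_{xabz}(y)=\Gamma(x,a,y,b,z)$, that the substitutions land on precisely the two identities stated; the remaining right-multiplication pair $(R_{aybz},R_{azby})$ would yield a further self-distributivity law of the same shape, related to the left one by the reversal symmetry (a), which the authors simply chose not to record.
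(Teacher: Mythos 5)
Your proposal is correct and follows exactly the paper's own proof: the first identity via the structural pair $(M_{xabz},M_{zabx})$ combined with the Klein symmetry relation, and the second via $(L_{xayb},L_{yaxb})$. The paper states this in one sentence; your version merely spells out the slot bookkeeping and the symmetry rewrite $\Gamma(z,a,c,b,x)=\Gamma(a,z,c,x,b)$ explicitly.
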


\begin{proof}
The first identity follows by applying the adjoint pair
$(f,f^t)=(M_{xabz},M_{zabx})$
to $\Gamma(u,c,v,d,w)$ (and using the symmetry property), and
similarly the second by using the pair
$(f,f^t)=(L_{xayb},L_{yaxb}$).
\end{proof}

\begin{corollary} \label{pair!!} For all $a,b \in \cX$,
the  maps $(\quad )^+:U_a \times U_b \times U_a \to U_a$ and
$(\quad)^-:U_b \times U_a \times U_b \to U_b$ defined in Theorem \ref{pair!}
are tri-affine (i.e., affine in all three variables) and
satisfy the para-associative law
\[
(xy(uvw)^\pm)^\pm = ((xyu)^\pm vw)^\pm = (x(vuy)^\mp w)^\pm.
\]
\end{corollary}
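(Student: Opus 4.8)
The plan is to prove the two assertions separately. The para-associative law will be essentially free, and tri-affineness will follow by reusing the block computation already carried out in Theorem \ref{pair!}, taken with respect to a single fixed decomposition.

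For the para-associative law, I would first observe that both ternary maps are restrictions of the single semitorsor product $[pqr] := \Gamma(p,a,q,b,r)$ on $\cX_{ab}$: one has $(pqr)^+ = [pqr]$ for $p,r \in U_a$, $q \in U_b$, and $(pqr)^- = [pqr]$ for $p,r \in U_b$, $q \in U_a$. Theorem \ref{MainTheorem}(2) states that $[\,\cdot\,]$ satisfies $[pq[rst]] = [p[srq]t] = [[pqr]st]$ on all of $\cX$. Substituting $(p,q,r,s,t)=(x,y,u,v,w)$ and reading off the three terms gives precisely
\[
(xy(uvw)^+)^+ = ((xyu)^+ vw)^+ = (x(vuy)^- w)^+,
\]
since $[srq] = \Gamma(v,a,u,b,y) = (vuy)^-$; the reversal of arguments in the middle term of the semitorsor law is exactly what produces the opposite product $(\quad)^-$ in the middle of the pair law. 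The only thing to check is that each intermediate expression lands in the set ($U_a$ or $U_b$) demanded by the next multiplication, and this is guaranteed by the well-definedness part of Theorem \ref{pair!}. The identity for the $-$ outer product follows symmetrically, or from the reversal symmetry of Theorem \ref{MainTheorem}(1).

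For tri-affineness I would fix once and for all a reference complement $c \in U_a$ and write $W = a \oplus c$, so that every element of $U_a$ is the graph of a unique operator in $\Hom(c,a)$; by Theorem \thmref{geom_props}(iii) this graph parametrization linearizes the affine structure of $U_a$, and in particular $x \mapsto P_x^a$ is affine, whence so is $x \mapsto P_a^x = 1 - P_x^a$. Writing $L_{xayb} = 1 - P_a^x P_y^b$ in block form and applying it to $z$, written as the graph of $\zeta \in \Hom(c,a)$, a direct calculation gives that $(xyz)^+ = L_{xayb}(z)$ is the graph of
\[
(1-\alpha)\zeta - \beta + \xi(\gamma\zeta + \delta),
\]
where $\xi \in \Hom(c,a)$ is the coordinate of $x$ and $P_y^b$ has blocks $\alpha,\beta,\gamma,\delta$. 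This expression is manifestly affine in $\xi$ (for $y,z$ fixed) and in $\zeta$ (for $x,y$ fixed); moreover, since $y \mapsto P_y^b$ is affine on $U_b$ (again Theorem \thmref{geom_props}(iii)), it is affine in $y$ as well (for $x,z$ fixed). As the graph coordinate is the intrinsic affine coordinate on the target $U_a$, this shows $(\quad)^+$ is affine in each argument; note that no transversality of $a$ and $b$ is used, so the conclusion holds for all $(a,b)$. The map $(\quad)^-$ is then handled by the reversal symmetry $\Gamma(x,a,y,b,z) = \Gamma(z,b,y,a,x)$, which identifies $(\quad)^-$ for the pair $(a,b)$ with $(\quad)^+$ for the pair $(b,a)$ (up to interchanging the outer arguments), or by the mirror computation with $R_{aybz} = 1 - P_b^z P_y^a$.

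The main thing to get right — and the only place a naive argument goes astray — is the interplay of affine structures in the tri-affine step: one must use a reference decomposition that does \emph{not} move with the argument being varied, and must keep in mind that the relevant affine structures on $U_a$ and $U_b$ are the intrinsic ones of Theorem \thmref{geom_props}(iii), given by the projectors $P_\cdot^a$ and $P_\cdot^b$, rather than the structure read off from some arbitrary chart (in an arbitrary chart the map even looks fractional-linear in $y$). Once it is recorded that $x \mapsto P_a^x$ and $y \mapsto P_y^b$ are affine, the computation is routine. For the para-associative law the only subtlety is the bookkeeping of matching the argument-reversal in the middle term of the semitorsor law to the switch $(\quad)^+ \leftrightarrow (\quad)^-$ in the statement.
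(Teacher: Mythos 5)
Your proposal is correct, and the para-associative half is exactly the paper's argument (restriction of the semitorsor law of Theorem \ref{MainTheorem}(2) for $\cX_{ab}$, with Theorem \ref{pair!} guaranteeing that each intermediate term may be read as one of the maps $(\quad)^\pm$; your matching of the reversed middle term $[vuy]$ with $(vuy)^\mp$ is right). The tri-affineness half, however, goes by a genuinely different route. The paper never computes: it invokes the structurality of the middle multiplications, i.e.\ the adjoint pair $(M_{xabz},M_{zabx})$ from Theorem \ref{structure}, and then uses the lattice-theoretic diagonal values of Corollary \ref{lattice'} to identify where the adjoint sends $a$ and $b$, namely $M_{zabx}(a)=\Gamma(z,a,a,b,x)=\Gamma(a,z,a,x,b)=b$, so that $M_{xabz}$ restricts to an affine map $U_b \to U_a$; the partial maps in the outer variables are handled by the same pattern. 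You instead fix a reference complement $c \in U_a$, parametrize $U_a$ by graphs in $\Hom(c,a)$, and verify by a block computation that $(xyz)^+$ has graph coordinate $(1-\alpha)\zeta - \beta + \xi(\gamma\zeta+\delta)$, visibly affine in $\xi$, in $\zeta$, and in the blocks of $P_y^b$; your two cautionary points --- the decomposition must not move with the variable being varied, and the graph coordinate must be matched against the intrinsic affine structure of Theorem \thmref{geom_props}(iii) --- are exactly what rescues this computation, since the block calculation inside Theorem \ref{pair!} (done with respect to $W=a\oplus x$) only yields affineness in $z$. The trade-off: your argument is elementary and self-contained, re-proves well-definedness along the way, and produces an explicit formula, but it is tied to the linear algebra of the Grassmannian model; the paper's structural argument is coordinate-free and is precisely the one that carries over verbatim to the axiomatic setting of Chapter 3, where the corresponding tri-affineness claim for abstract associative geometries is proved by citing that its proof is ``exactly the same as the one of Corollary \ref{pair!!}''.
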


\begin{proof}
Let us show that $M_{xabz}$ induces an affine map
$U_b \to U_a$, $y \mapsto (xyz)^+$,
for fixed $x,z \in U_a$. We know already that this map is well-defined
(Theorem \ref{pair!}).
Since
$(f,g)=(M_{xabz},M_{zabx})$ is structural, the map $f:U_{g(a)} \to U_a$
is  affine, where (according to Corollary \ref{lattice'}, (1)),
\[
g(a)=M_{zabx}(a)=\Gamma(z,a,a,b,x)=\Gamma(a,z,a,x,b)=b.
\]
By the same kind of argument, using Corollary \ref{lattice'}, (2) and (3),
wee see that the other partial maps are affine.
The corresponding statements for $(\quad)^-$ follow by symmetry,
and the para-associative law follows by restriction of the para-associative
law in the semitorsor $\cX_{ab}$.
\end{proof}

\begin{remark}
For $a=b$, we get the additive torsor $U_a$, and if $U_{ab} \not= \emptyset$,
then we get a sort of ``triaffine extension'' of the torsor $U_{ab}$.
If $a \top b$, then we have base points $a$ in $U_b$ and $b$ in $U_a$,
and obtain a trilinear product (Theorem \ref{pair!}).
\end{remark}

\subsection{The extended dilation map}
\sublabel{dilation}

Next we (re-)define, for $r \in \bK$, the \emph{dilation map}
$\Pi_r:\cX \times \cX \times \cX \to \cX$
by the following equivalent expressions
\begin{align*}
\Pi_r(x,a,z) &:=
\Bigsetof{\omega \in W}
{\exists \alpha \in a, \exists \zeta \in z, \exists \xi \in x: \,
\omega - r \alpha = \xi = \zeta - \alpha } \\
& =
\Bigsetof{\omega \in W}
{\exists \alpha \in a, \exists \zeta \in z, \exists \xi \in x: \,
\omega +(1-r)  \alpha = \zeta = \alpha + \xi } \\
& = \Bigsetof{\omega \in W}
{\exists \alpha \in a, \exists \zeta \in z, \exists \xi \in x: \,
\omega = (1-r)\xi + r \zeta, \, \zeta - \xi = \alpha}
\end{align*}
We refer to the last expression as the ``$(x,z)$-description'', and
we define partial maps $\cX \to \cX$ by
\[
\lambda^r_{xa}(z):= \rho^r_{az}(x):=\mu^r_{xz}(a):=\Pi_r(x,a,z)
\]
(where $\lambda$ reminds us of ``left'', $\rho$ ``right'' and $\mu$ ``middle'').

\begin{theorem}
The map $\Pi_r: \cX^3 \to \cX$ extends the ternary map
defined in the preceding chapter (and denoted by the same symbol
there), and it has the following properties:
\begin{enumerate}[label=\emph{(}\arabic*\emph{)},leftmargin=*]
\item \emph{Symmetry:} $\mu^r_{xz} = \mu^{1-r}_{zx}$, that is,
$\lambda^r_{xa}=\rho^{1-r}_{ax}$ or
\[
\Pi_r(x,a,z)=\Pi_{1-r}(z,a,x) .
\]
\item \emph{Multiplicativity:} if $x \top a$ and $r,s \in \bK$,
\[
\Pi_r(x,a,\Pi_s(x,a,y))= \Pi_{rs}(x,a,y),
\]
\item \emph{Diagonal values:}
\[
\Pi_r(x,a,x)=x, \quad
\Pi_0(x,a,z)= \Pi_1(z,a,x)=x \land (z \lor a)=\Gamma(a,x,x,a,z) .
\]
\item \emph{Structurality:} if $r(1-r) \in \bK^\times$,
then, for all $x,a,z \in \cX$, the pairs
\[
(\lambda_{xa}^r,\lambda_{ax}^r),\quad
(\mu_{xz}^r,\mu_{zx}^r)
\]
are structural transformations of $(\cX,\Gamma)$.
\end{enumerate}
\end{theorem}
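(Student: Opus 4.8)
The plan is to take the four assertions in order, leaving structurality (the substance of part~(4)) for last.

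Parts (1)--(3) and the first half of (4) I would read off directly from the three equivalent descriptions of $\Pi_r$. For the extension (1): if $x \top a$, then in the first description the decomposition $\zeta = \xi + \alpha$ with $\xi \in x$, $\alpha \in a$ is unique, namely $\xi = P_x^a \zeta$ and $\alpha = P_a^x \zeta$, so $\omega = \xi + r\alpha = (P_x^a + r P_a^x)\zeta = \delta^{(r)}_{xa}(\zeta)$ and the new value agrees with the old one; the case $z \top a$ then follows from the symmetry relation. Part (2) is immediate from the last (manifestly $(x,z)$-symmetric) description after replacing $\alpha$ by $-\alpha$. For (3), since $x \top a$ the maps $\Pi_r(x,a,\cdot)$, $\Pi_s(x,a,\cdot)$, $\Pi_{rs}(x,a,\cdot)$ are by (1) the honest operators $\delta^{(r)}_{xa}, \delta^{(s)}_{xa}, \delta^{(rs)}_{xa}$, and the claim reduces to $\delta^{(r)}_{xa}\delta^{(s)}_{xa} = \delta^{(rs)}_{xa}$, clear since both operators fix $x$ pointwise and act as the scalars $r,s$ on $a$. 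For the diagonal values in (4): $\Pi_r(x,a,x) = x$ because in the last description $\omega = (1-r)\xi + r\zeta$ with $\xi,\zeta \in x$ lies in $x$; and $\Pi_0(x,a,z) = x \land (z \lor a)$ because with $r=0$ the first description gives $\omega = \xi \in x$ and simultaneously $\omega = \zeta - \alpha \in z \lor a$. Then $\Pi_1(z,a,x) = \Pi_0(x,a,z)$ is part (2), and the identification of $x \land (z \lor a)$ with the stated diagonal value of $\Gamma$ comes from the corresponding subdiagonal case of Theorem~\ref{lattice}.

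The real content is structurality. The plan is to exhibit each dilation operator as a push-forward along a linear relation and then apply the theorem (proved above) that $(\rr_*, \rr^*)$ is a structural pair for every linear relation $\rr \subset W \oplus W$. For the left operators, set
\[
\bl^{(r)}_{xa} := \setof{(\zeta,\omega) \in W \oplus W}{\exists\, \xi \in x,\ \alpha \in a:\ \zeta = \xi + \alpha,\ \omega = \xi + r\alpha}\,,
\]
which is a submodule of $W \oplus W$. Comparing with the first description of $\Pi_r$ shows at once that $\lambda^r_{xa} = (\bl^{(r)}_{xa})_*$. For the middle operators, set
\[
\bm^{(r)}_{xz} := \setof{(\alpha,\omega) \in W \oplus W}{\exists\, \xi \in x,\ \zeta \in z:\ \alpha = \zeta - \xi,\ \omega = (1-r)\xi + r\zeta}\,,
\]
and the last description gives $\mu^r_{xz} = (\bm^{(r)}_{xz})_*$. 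By the cited theorem it then suffices to identify the second member of each pair with the reverse-relation pull-back, that is, to prove $\lambda^r_{ax} = (\bl^{(r)}_{xa})^*$ and $\mu^r_{zx} = (\bm^{(r)}_{xz})^*$.

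This last identification is where I expect the only real difficulty to lie, and where the hypothesis $r(1-r) \in \bK^\times$ enters. A direct computation gives
\[
(\bl^{(r)}_{xa})^*(y) = \setof{\xi + \alpha}{\xi \in x,\ \alpha \in a,\ \xi + r\alpha \in y}\,,
\qquad
\lambda^r_{ax}(y) = \Pi_r(a,x,y) = \setof{\xi + r\alpha}{\xi \in a,\ \alpha \in x,\ \xi + \alpha \in y}\,,
\]
and these two submodule-valued expressions are not visibly equal: constraint and output have exchanged roles. They are identified by rescaling the $a$-component by the unit $r$; replacing $\alpha$ by $r^{-1}\alpha$ in the first set turns it into $r^{-1}$ times the second, and since multiplication by a unit of $\bK$ carries every submodule onto itself, the two images coincide. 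This uses only $r \in \bK^\times$. For the middle operators the analogous identification of $(\bm^{(r)}_{xz})^*$ with $\mu^r_{zx}$ requires a diagonal rescaling of \emph{both} coordinates of the parametrizing module $z \oplus x$, which is legitimate precisely when $r$ and $1-r$ are simultaneously units, i.e.\ when $r(1-r) \in \bK^\times$. Once these two equalities are in hand, the functoriality of the $(\rr_*,\rr^*)$ construction finishes the proof with no further computation. The main obstacle is thus the bookkeeping that verifies that the scalar rescalings genuinely match the two submodule-valued maps; everything else reduces to choosing the right one of the equivalent descriptions of $\Pi_r$.
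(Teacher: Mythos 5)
Your proposal is correct and follows essentially the same route as the paper: the extension claim and parts (1)--(3) are read off the equivalent descriptions exactly as in the paper's proof, and for structurality you realize each dilation as the push-forward $\rr_*$ of a linear relation (your $\bl^{(r)}_{xa}$ is literally the paper's $\rr_{xa}$), identify $\rr^*$ with the partner dilation by an invertible change of variables, and invoke the theorem that $(\rr_*,\rr^*)$ is always a structural pair. Your explicit observation that the left pairs need only $r \in \bK^\times$ while the middle pairs genuinely need both $r$ and $1-r$ invertible matches the paper, whose proof likewise uses only the invertibility of $r$ for the left case and treats the middle case as ``similar''.
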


\begin{proof}
The symmetry relation (1) follows directly from the
$(x,z)$-description.

Next we show that $\Pi_r$ coincides with the dilation map from the
preceding chapter.
Assume first that $x \top a$. We show that $\Pi_r(x,a,z)=(rP_a^x + P^a_x)(z)$:
\begin{align*}
(rP_a^x + P^a_x)(z) &=
\Bigsetof{ e \in W}{\exists \zeta \in z :
e = r P_a^x(\zeta) + P^a_x(\zeta)} \\
& = \Bigsetof{e \in W}{\exists \alpha \in a, \exists \zeta \in z, \exists \xi \in x:
e - r \alpha =  \zeta - \alpha = \xi }  = \Pi_r(x,a,z)
\end{align*}
writing $\zeta = P_a^x(\zeta) + P_x^a(\zeta)=\alpha + \xi$.
For $z \top a$, the claim follows now from the symmetry relation (1).

(3) With $\omega = (1-r)\xi + r \zeta$, it follows for $x=z$ that
$ \Pi_r(x,a,x) \subset  x$.
Conversely, we get $x \subset  \Pi_r(x,a,x)$ by letting  $\alpha =0$ and $\zeta=\xi$,
given $\xi \in x$.
 The other  relations are proved similarly.

(2) Under the assumption $x \top a$, the claim amounts to the operator
identity
\[
(rP_a^x + P^a_x)(sP_a^x + P^a_x) =
(rs P_a^x + P^a_x)
\]
which is easily checked.

(4) Fix $x,a \in \cX$, $r \in \bK$ and
define the linear subspace $\rr \subset W \oplus W$ by
\[
\rr:=\rr_{xa}:=
\big\{ (\zeta,\omega) \in W \oplus W | \,
\exists \alpha \in a, \exists \xi \in x : \,
\omega = \zeta - (1-r)\alpha, \,  \zeta - \alpha  =x \big\}
\]
Then
\[
\rr_*(z)=\{ \omega \in W | \, \exists \zeta \in z:
(\zeta,\omega) \in \rr \} = \Pi_r(x,a,z).
\]
On the other hand, by a straightforward change of variables (which is bijective
since $r$ is assumed to be invertible), one checks that
\[
\rr^*(z) = \setof{ \omega \in W}{\exists \zeta \in z :  (\omega,\zeta)
\in \rr} =  \Pi_r(a,x,z).
\]
Hence $(\lambda_{xa}^r,\lambda_{ax}^r) = (\rr_*,\rr^*)$ is structural.
%
%
%
The calculation  for the middle multiplications is similar.
%
\end{proof}

\noindent{\it Remarks.}
1.  If $r$ is invertible, then $\Pi_r(a,x,z)=\Pi_{r\inv}(x,a,z)$. Combining with (1), we see that $\Pi$
has the same behaviour under permutations as for the classical cross-ratio. 

2. 
If $x \top a$ and $r \in \bK$ an arbitrary scalar,
we still have structurality in (4). The situation is less clear
if $x,a,r$ are all arbitrary.

3.
One can define \emph{structurality with respect to $\Pi_r$} in the same
way as for $\Gamma$, by conditions of the form
\[
f\bigl( \Pi_r(x,g(a'),z \bigr)= \Pi_r'(f(x),a',f(z)), \quad
g\bigl( \Pi_r'(x',f(a),z' \bigr)= \Pi_r(g(x'),a,g(z')).
\]
Then partial maps of $\Gamma$ are structural for $\Pi_r$, and
partial maps of $\Pi_s$ are structural for $\Pi_r$ (this property
has been used in \cite{Be02} to characterize \emph{generalized
projective geometries}). The proofs are similar to the ones given above.

\section{Associative geometries}

In this chapter we give an axiomatic definition of associative
geometry, and we show that, at a base point, the corresponding
``tangent object'' is an associative pair. Conversely, given an
associative pair, one can reconstruct an associative geometry.
The question whether these constructions can be refined to give a
suitable equivalence of categories will be left for future work.

\subsection{Axiomatics}

\begin{definition}
An \emph{associative geometry} over a commutative unital ring $\bK$ is given
by a set $\cX$ which carries the following structures: $\cX$ is a
\emph{complete lattice} (with join denoted by $x \lor y$ and meet denoted by
$x \land y$), and
maps (where $s \in \bK$)
\[
\Gamma :\cX^5 \to \cX, \quad \Pi_s:\cX^3 \to \cX,
\]
such that the following holds.
We use the notation
\[
L_{xaby}(z):=M_{xabz}(y):=R_{aybz}(x):=\Gamma(x,a,y,b,z)
\]
for the partial maps of $\Gamma$, and call $x$ and $y$ \emph{transversal},
denoted by $x \top y$, if $x \land y = 0$ and $x \lor y = 1$, and we let
\[
C_a := a^\top := \setof{ x \in \cX}{x \top a}, \quad
C_{ab}:=C_a \cap C_b
\]
for sets of elements transversal to $a$, resp.\ to $a$ and $b$.
\begin{enumerate}[label=\emph{(}\arabic*\emph{)},leftmargin=*]
\item
\emph{The semitorsor property:}
 for all $x,y,z,u,v,a,b \in \cX$:
\[
\Gamma(\Gamma(x,a,y,b,z),a,u,b,v)=
\Gamma(x,a,\Gamma(u,a,z,b,y),b,v)=\Gamma(x,a,y,b,\Gamma(z,a,u,b,v)) .
\]
In other words, for fixed $a,b$,
 the product $(xyz):=\Gamma(x,a,y,b,z)$
turns $\cX$ into a semitorsor, which will be denoted by $\cX_{ab}$.
\item
\emph{Invariance of $\Gamma$ under the Klein $4$-group in $(x,a,b,z)$:}  for
all $x,a,y,b,z \in \cX$,
\begin{enumerate}[label=\emph{(}\roman*\emph{)},leftmargin=*]
\item $\Gamma(x,a,y,b,z)=\Gamma(z,b,y,a,x) $
\item $\Gamma(x,a,y,b,z)=\Gamma(a,x,y,z,b) $
\end{enumerate}
In particular,
 $\cX_{ba}$ is the opposite semitorsor of $\cX_{ab}$.
\item
\emph{Structurality of partial maps:}
for all $x,a,y,b,z \in \cX$, the pairs
\[
(L_{xayb},L_{yaxb}), \quad (M_{xabz},M_{zabx}), \quad
(R_{aybz},R_{azby})
\]
are structural transformations (see definition below).
\item
\emph{Diagonal values:}
\begin{enumerate}[label=\emph{(}\roman*\emph{)},leftmargin=*]
\item
for all $a,b,y \in \cX$,
$\Gamma(a,a,y,b,b)=a \lor b$,
\item
for all $a,b,y \in \cX$,
$\Gamma(a,b,y,a,b)=a \land b$,
\item
if $x \in C_{ab}$, then $\Gamma(x,a,x,b,z)=z=\Gamma(z,b,x,a,x)$,
\item
if $a \top x$ and $y \top b$, then
$\Gamma(x,a,y,b,b)= b$,
\item
%
if $a \top y$ and $b \top x$, then $\Gamma(x,a,y,b,a)= a$.
\end{enumerate}
\item
\emph{The affine space property:}
for all $a \in \cX$ and $r \in \bK$, $C_a$ is stable under the dilation map
$\Pi_r$, and $C_a$ becomes an affine space with additive torsor structure
$$
x - y +x = \Gamma(x,a,y,a,z)
$$
and scalar action given
for $x,y \in C_a$ by
\[
r \cdot_x y = (1-r) x + r y = \Pi_r(x,a,y).
\]
\item
\emph{The semitorsored pairs:} for all $a,b \in \cX$,
\[
\Gamma(U_a,a,U_b,b,U_a) \subset U_a, \quad
\Gamma(U_b,a,U_a,b,U_b) \subset U_b .
\]
\end{enumerate}
\end{definition}

\begin{definition}
The \emph{opposite geometry} of an associative geometry $(\cX,\top,\Gamma,\Pi)$,
denoted by $\cX^{op}$,
is $\cX$ with the same
dilation map $\Pi$, the opposite quintary product map
\[
\Gamma^{op}(x,a,y,b,z):=\Gamma(z,a,y,b,x)\,,
\]
(which by (4) induces the dual lattice structure) and transversality relation $\top$ 
determined by the lattice structure.
A \emph{base point} in $\cX$ is a fixed transversal pair $(o^+,o^-)$,
and the \emph{dual base point} in $\cX$ is then $(o^-,o^+)$.
\end{definition}

\begin{definition}
\emph{Homomorphisms of associative geometries} are maps
$\phi:\cX \to \cY$ such that
\begin{align*}
\phi \big( \Gamma(x,a,y,b,z) \big) &=
\Gamma(\phi x,\phi a,\phi y,\phi b,\phi z) \\
\phi \bigl( \Pi_r(x,a,y)) &= \Pi_r(\phi x,\phi a,\phi y) \bigl)
\end{align*}
It is clear that associative geometries over $\bK$ with their
homomorphisms form a category.
\emph{Antihomomorphisms} are homomorphisms into the opposite geometry.
Note that, by (4), homomorphisms are in particular lattice homomorphisms, and
antihomomorphisms are in particular lattice antihomomorphisms. 
\emph{Involutions} are antiautomorphims of order two; they play
an important r\^ole which will be discussed in subsequent work
\emph{(}\cite{BeKi09}\emph{)}.
For a fixed base point $(o^+,o^-)$, we
define the \emph{structure group} as the group of automorphisms of
$\cX$ that preserve $(o^+,o^-)$.

\medskip
An \emph{adjoint} or \emph{structural pair of transformations} is a pair
$g:\cX \to \cY$, $h:\cY \to \cX$
 such that
\begin{align*}
 g \bigl( \Gamma(x,h(u),y,h(v),z) \bigl)  &=  \Gamma( g(x),u,g(y),v,g(z) )  \\
g \bigl( \Pi_r (x,h(u),y \bigr) &= \Pi_r (g(x) ,u,g(y) )
\end{align*}
and vice versa.  Clearly, this also
defines a category.
\end{definition}

\subsection{Consequences}
\sublabel{consequences}

We are going to derive some easy consequences of the axioms.
%
%
%
Let us rewrite the semitorsor property in operator form:
\[
\begin{matrix}
R_{aubv} L_{xayb}&  =& M_{xabv}M_{uaby}& =& L_{xayb}R_{aubv}
\cr
L_{xayb} L_{zaub}& =& L_{x,a,L_{ybza}(u),b} & = &
L_{L_{xayb}(z),a,y,b}
\cr
M_{\Gamma(x,a,y,b,z),a,b,v} & =& M_{xabv} L_{ybza} & = & L_{xayb} M_{zabv}
\end{matrix}
\]
Assume that
$x,y \in U_{ab}$ and $z \in \cX$. Then, according to (4),
$L_{xaxb}=\id_\cX = L_{ybyb}$, whence
$
L_{xabb} L_{yaxb} = L_{x,a, L_{ybyb}(x),b} = L_{xaxb} = \id_\cX
$,
and $L_{xayb}:\cX \to \cX$ is invertible with inverse
\[
(L_{xayb})\inv = L_{yaxb}.
\]
By (2), this is equivalent to $(R_{aybx})\inv = R_{axby}$, and
in the same way one shows  that $M_{xaby}$ is invertible with inverse
\[
(M_{xaby})\inv = M_{xbay}.
\]
It follows that $L_{xayb}$, $R_{aybx}$ and $M_{xaby}$ are automorphisms
of the geometry.
In particular, $M_{xaay}$ and $M_{xabx}$ are of automorphisms of order two.

\begin{proposition}
For all $a,b \in \cX$, $C_{ab}$ is stable under the ternary map
$(x,y,z) \mapsto\Gamma(x,a,y,b,z)$, which turns it into a torsor
denoted by $U_{ab}$.
For any $y \in U_{ab}$, the group $(U_{ab},y)$ acts on $\cX$
 from the left and from the right by the formulas given
in Theorem 1.3, and  both actions commute.
\end{proposition}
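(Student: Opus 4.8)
The plan is to assemble the statement from the axioms (1)--(6) together with the operator consequences derived just above; since those consequences already establish that $L_{xayb}$, $R_{aybz}$ and $M_{xabz}$ are automorphisms of the geometry whenever $x,y,z \in U_{ab}$, the domain-of-definition difficulties that complicated Theorems \thmref{geom_props} and \thmref{local} in the concrete Grassmannian setting are absent here, and the proof reduces to organized rewriting.

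First I would prove stability of $C_{ab}$ under $(x,y,z)\mapsto \Gamma(x,a,y,b,z)$. Fix $x,y,z\in C_{ab}$ and set $w:=\Gamma(x,a,y,b,z)=M_{xabz}(y)$. The crux is to compute the two diagonal values $M_{xabz}(a)=b$ and $M_{xabz}(b)=a$. For the first, axiom (2)(ii) gives $\Gamma(x,a,a,b,z)=\Gamma(a,x,a,z,b)$, and since $a\in C_{xz}$ (because $x,z\in C_{ab}$ force $a\top x$ and $a\top z$), axiom (4)(iii) yields $\Gamma(a,x,a,z,b)=b$; the second value follows the same way after flipping via the Klein relation $M_{xabz}=M_{zbax}$ coming from (2)(i). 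Now $M_{xabz}$ is an automorphism of the geometry, and since by axiom (4) it is in particular a bijective lattice homomorphism, it fixes $0$ and $1$ and hence preserves transversality. As $y\top a$ and $y\top b$, applying $M_{xabz}$ gives $w\top b$ and $w\top a$, i.e.\ $w\in C_{ab}$.

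Next, that $C_{ab}$ with this product is a torsor: by axiom (1) the product is para-associative, so stability makes $C_{ab}$ a sub-semitorsor, and para-associativity already gives (G1); the idempotent laws (G2) come from axiom (4)(iii), namely $\Gamma(x,a,x,b,y)=y$ and, after applying (2)(i), $\Gamma(y,a,x,b,x)=\Gamma(x,b,x,a,y)=y$ for $x,y\in C_{ab}$, so $C_{ab}$ is a torsor (Appendix A). Fixing $y\in U_{ab}$ makes $U_{ab}$ a group with product $x\cdot_y z=\Gamma(x,a,y,b,z)$. For the left action I would read off from the equality of the first and third members of the semitorsor identity, specialized to $u=y$, that $L_{xayb}L_{zayb}=L_{\Gamma(x,a,y,b,z),a,y,b}=L_{x\cdot_y z,\,a,y,b}$; with $L_{yayb}=\id$ from (4)(iii) and the invertibility of each $L_{xayb}$ from the consequences, this exhibits $x\mapsto L_{xayb}$ as an action of $(U_{ab},y)$ by automorphisms. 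The right action is obtained by running the same argument in the opposite torsor $U_{ba}$, using $R_{aybz}=L_{zbya}$ from (2)(i), so that a left action of the opposite group $(U_{ba},y)$ is a right action of $(U_{ab},y)$.

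Finally, that the two actions commute is again the equality of the first and third members of the semitorsor identity, read as an operator identity differently: for any $t$, $R_{aybz}L_{xayb}(t)=\Gamma(\Gamma(x,a,y,b,t),a,y,b,z)$ and $L_{xayb}R_{aybz}(t)=\Gamma(x,a,y,b,\Gamma(t,a,y,b,z))$, which coincide by axiom (1). I expect the stability step to be the main obstacle: unlike the remaining parts, which are essentially direct transcriptions of axioms (1), (2) and (4), it requires first pinning down the diagonal values $M_{xabz}(a)=b$ and $M_{xabz}(b)=a$ by a careful Klein-symmetry reduction to axiom (4)(iii), and then invoking that geometry automorphisms respect transversality.
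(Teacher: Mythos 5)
Your proof is correct and follows essentially the same route as the paper's: it rests on the fact (from the Consequences subsection) that the partial maps of $\Gamma$ with entries in $C_{ab}$ are automorphisms of the geometry, uses diagonal values to get stability of $C_{ab}$, and then invokes axioms (1) and (4)(iii) for the torsor laws and the commuting left/right actions. The only cosmetic difference is that the paper proves stability via $L_{xayb}$, which fixes $a$ and $b$ directly by axioms (4)(iv)--(v), whereas you use $M_{xabz}$, which swaps them via (4)(iii) and the Klein symmetry --- the same device as in the proof of Theorem \thmref{geom_props} in Chapter 1.
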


\begin{proof}
As remarked above, $L_{xayb}$ is an automorphism of the geometry.
It stabilizes $a$ and $b$ and hence also $C_a$ and $C_b$.
Thus $C_{ab}$ is stable under the ternary map, and the para-associative
law and the idempotent law hold by (1) and (4) (iii).
The remaining statements follow easily from (1).
\end{proof}

In part II (\cite{BeKi09}) we will also describe the ``Lie algebra''
of $U_{ab}$, thus giving a relatively simple description of the
group structure of $U_{ab}$.
%
%
-- Next we give the promised conceptual interpretation of Equation (\ref{coinc}).

\begin{lemma} \label{coinc'}
For all $z \in U_b$, $x \in U_{ab}$,  and all $y \in \cX$,
\[
\Gamma\bigl(x,b,\Gamma(x,a,y,b,z),b,z\bigr)  =
\Gamma(z,b,a,y,x) .
\]
\end{lemma}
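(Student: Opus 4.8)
The plan is to evaluate the two nested applications of $\Gamma$ on the left-hand side by means of the middle-multiplication operators of Chapter~1, to collapse the resulting composite into a single right-multiplication operator, and then to match everything up using the Klein symmetries of Theorem~\ref{MainTheorem}. The point of working with operators here is that the inner variable $y$ is completely arbitrary, so I cannot assume that the intermediate subspace $\Gamma(x,a,y,b,z)$ is transversal to anything.

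First I would verify that both $\Gamma$-evaluations fall into the domain $D_M$, so that each is computed by a genuine linear operator. Since $x \in U_{ab}$ we have $x \top a$ and $x \top b$, and since $z \in U_b$ we have $z \top b$; moreover $C_{ab} \ni x$ and $C_{bb} \supseteq C_b \ni x$ are nonempty, and neither membership condition constrains the middle variable. Hence the inner product is $w := \Gamma(x,a,y,b,z) = M_{xabz}(y)$ with $M_{xabz} = [P^a_x - P^z_b]$, and the outer product is $\Gamma(x,b,w,b,z) = M_{xbbz}(w)$ with $M_{xbbz} = [P^b_x - P^z_b]$; both evaluations are valid for arbitrary $y$, and no transversality of $w$ is needed. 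Because the push-forward of a subspace under a linear map is functorial, the left-hand side equals $(M_{xbbz}\circ M_{xabz})(y)$.

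The computational heart is then the operator identity
\[
(P^b_x - P^z_b)(P^a_x - P^z_b) = 1 - P^b_z P^x_a,
\]
which I would obtain from the three basic relations $P^b_x P^a_x = P^a_x$ (same image $x$), $P^b_x P^z_b = 0$ (the image of the right factor is the kernel of the left one), and $(P^z_b)^2 = P^z_b$, followed by the substitution $P^z_b = 1 - P^b_z$ and $1 - P^a_x = P^x_a$. Reading $1 - P^b_z P^x_a$ off the template $R_{aybz} = [1 - P^z_b P^a_y]$ for right multiplications gives $1 - P^b_z P^x_a = R_{xazb}$. By the $D_R$ clause in the definition of $\Gamma$, which applies because $a \top x$ and $b \top z$, the left-hand side therefore equals $R_{xazb}(y) = \Gamma(y,x,a,z,b)$.

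It finally remains to reconcile $\Gamma(y,x,a,z,b)$ with the right-hand side $\Gamma(z,b,a,y,x)$, and this I would do purely by the Klein four-group symmetry of Theorem~\ref{MainTheorem}: relation (b) gives $\Gamma(y,x,a,z,b) = \Gamma(x,y,a,b,z)$, while relation (a) (full reversal of arguments) gives $\Gamma(z,b,a,y,x) = \Gamma(x,y,a,b,z)$, so the two sides coincide. I expect the main obstacle to be the bookkeeping in the projector algebra, namely keeping track of which of the three composition rules applies to each product $P^{?}_{?}P^{?}_{?}$, together with the care needed to check that both $\Gamma$-evaluations genuinely lie in $D_M$ for \emph{every} $y$; once these are secured the computation is forced. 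As a fallback requiring no operators at all, the two subspaces could instead be compared directly through the $(a,b)$- and $(x,z)$-descriptions of Lemma~\lemref{gamma-equivs}, though this route is messier than the operator calculation.
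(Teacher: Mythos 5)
Your computation is sound as far as it goes: the domain checks for the two $D_M$-evaluations, the projector identity $(P^b_x - P^z_b)(P^a_x - P^z_b) = 1 - P^b_z P^x_a$, the identification $1 - P^b_z P^x_a = R_{xazb}$, and the final Klein-symmetry bookkeeping are all correct. The problem is that the proof lives in the wrong setting. Lemma \ref{coinc'} is stated in Chapter 3, inside the subsection that derives ``consequences of the axioms'', and it is immediately used there to produce further axiom-level consequences (the analogue of Axiom (6) stated right after it); in that context $\cX$ is an \emph{abstract} associative geometry. There is then no ambient module $W$, no projectors $P^a_x$, no case distinction $D_L$, $D_M$, $D_R$, and Theorem \ref{MainTheorem} is not available --- only Axioms (1)--(6) are. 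Your argument, built entirely on the projector calculus of Chapters 1--2, therefore proves the identity only for Grassmannian geometries; since the paper establishes no representation theorem embedding an arbitrary associative geometry into a Grassmannian one, this is a genuine gap in generality, not a cosmetic difference. Your fallback via the subspace descriptions of Lemma \lemref{gamma-equivs} suffers from exactly the same restriction.

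The gap is repairable precisely because the identity you verified with projectors is the abstract one in disguise. Note first that $R_{xazb} = L_{bzax}$ (an instance of Klein symmetry (a)), so what you proved is $M_{xbbz}M_{xabz} = L_{bzax}$. The paper obtains this from the axioms: Axiom (2) rewrites $M_{xbbz}M_{xabz}$ as $M_{bzxb}M_{bzxa}$; the semitorsor property, Axiom (1), in its operator form $M_{x'a'b'v'}M_{u'a'b'y'} = L_{x'a'y'b'}R_{a'u'b'v'}$, turns this into $L_{bzax}R_{zbxb}$; the diagonal-value Axiom (4)(iii) gives $R_{zbxb} = \id_\cX$, because $b$ is transversal to both $x$ and $z$; and a last application of Axiom (2) converts $L_{bzax}(y) = \Gamma(b,z,a,x,y)$ into $\Gamma(z,b,a,y,x)$. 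So your projector computation is the Grassmannian shadow of the intended proof; to establish the lemma as stated you must run the same collapse at the level of the axioms rather than inside the linear-algebra model.
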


\begin{proof}
Using that $R_{xaxb} = \id_\cX$ for $a,b \in U_x$,
we have, for all $x,z \in U_b$,
\begin{eqnarray*}
\Gamma\bigl(x,b,\Gamma(x,a,y,b,z),b,z\bigr) & = &
M_{xbbz} M_{xabz}(y) \cr
&=& M_{bzxb}M_{bzxa}(y) \cr
&=& L_{bzax} R_{zbxb}(y) \cr
&=& L_{bzax} (y)
=\Gamma(b,z,a,x,y)=\Gamma(z,b,a,y,x).
\end{eqnarray*}
\end{proof}

\noindent
Since the operator $M_{xbbz}$ is invertible with inverse $M_{zbbx}$,
we have, equivalently,
\[
\Gamma(x,a,y,b,z)  =
\Gamma(z,b,\Gamma(z,b,a,y,x),b,x) .
\]
If $a$ and $b$ are transversal, we may
rewrite the lemma in the form (\ref{coinc}): with $b=o^-$, $y=o^+$: for all $x,z \in V^+$,
\[
\Gamma(z,o^-,a,o^+,x) =
\Gamma(x,o^-,\Gamma(x,a,o^+,o^-,z),o^-,z)=
x - \Gamma(x,a,o^+,o^-,z) + z.
\]
We will see in the following result that $\Gamma(z,o^-,a,o^+,x)$ is trilinear in $(z,a,x)$,
and hence $\Gamma(x,a,o^+,o^-,z)$ is tri-affine in $(x,a,z)$, and both expressions can be
considered as geometric interpretations of the associative pair attached to $(o^+,o^-)$
(see \S{0.4}).
More generally, the lemma implies
the following analog of Axiom (6): {\em for all $b,y \in \cX$ (transversal or not), the map
\[
U_b \times U_y \times U_b \to U_b,  \quad
(x,a,z) \mapsto \Gamma(x,a,y,b,z)
\]
is well-defined and affine in all three variables.}

\subsection{From geometries to associative pairs}
\sublabel{geom_to_pairs}

See Appendix B for the notion of \emph{associative pair}.

\begin{theorem}
Let $(\cX,\top,\Gamma,\Pi_r)$ be an associative geometry over
$\bK$.
\begin{enumerate}[label=\roman*\emph{)},leftmargin=*]
\item
Assume $\cX$ admits a transversal pair, which we take as
base point $(o^+,o^-)$.
Then, letting $\bA^+:=U_{o^-}$ and $\bA^-:=U_{o^+}$,
the pair of linear spaces
$(\bA^+,\bA^-)$ with origins $o^+$, resp.\ $o^-$, becomes an associative
pair when equipped with
\[
\langle xbz\rangle^+:= \Gamma(x,o^-,b,o^+,z), \quad
\langle ayc\rangle^-:= \Gamma(a,o^-,y,o^+,c).
\]
This construction is functorial (in the ``usual'' category).
\item
Assume $\cX$ admits a transversal triple $(a,b,c)$.
Then, letting $\bB:=U_c$, the $\bK$-module
$\bB$ with origin $o^+:=a$ becomes an associative unital
algebra with unit $u:=b$ and product map
\[
\bA \times \bA \to \bA, \quad (x,z) \mapsto
x z:= \Gamma(x,a,u,c,z).
\]
This construction is functorial (in the ``usual'' category) .
\end{enumerate}
\end{theorem}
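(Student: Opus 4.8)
The plan is to verify, one at a time, the defining properties of an associative pair --- well-definedness of the two trilinear products, their trilinearity, and the para-associative law --- directly from Axioms (1)--(6), and then to deduce part (ii) from part (i) by a single symmetry manipulation. By the affine space property (Axiom (5)), $\bA^+ = U_{o^-}$ and $\bA^- = U_{o^+}$ are affine spaces over $\bK$; fixing $o^+$, resp.\ $o^-$, as origin makes them $\bK$-modules. That $\langle xbz\rangle^+ = \Gamma(x,o^-,b,o^+,z)$ lies in $\bA^+$ for $x,z \in \bA^+$, $b \in \bA^-$, and that $\langle ayc\rangle^-$ lies in $\bA^-$, are precisely the two inclusions of Axiom (6) for the pair $(o^-,o^+)$.

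The step I expect to be the main obstacle is trilinearity, and within it the passage to \emph{tri-affineness}. The three partial maps $z\mapsto\langle xbz\rangle^+$, $x\mapsto\langle xbz\rangle^+$, $b\mapsto\langle xbz\rangle^+$ are the operators $L_{x,o^-,b,o^+}$, $R_{o^-,b,o^+,z}$, $M_{x,o^-,o^+,z}$, each of which, by the structurality axiom (Axiom (3)), is one half of a structural pair. Since in the abstract setting a structural pair is required to intertwine \emph{both} $\Gamma$ \emph{and} every $\Pi_r$, each such partial map restricts to a map of affine spaces preserving the additive torsor structure $\Gamma(\cdot,o^-,\cdot,o^-,\cdot)$ and the scalar action $\Pi_r$, hence is affine; this is the argument already used in Corollary~\ref{pair!!}, with the required base-point identities (of the shape $g(o^-)=o^-$) now supplied by the diagonal-value Axiom (4) and the symmetry relations (Axiom (2)) in place of Corollary~\ref{lattice'}. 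To upgrade ``affine'' to ``linear'' I would check that each product equals the relevant origin as soon as one argument is at its origin:
\[
\langle x\,b\,o^+\rangle^+ = \Gamma(x,o^-,b,o^+,o^+) = o^+
\]
by Axiom (4)(iv);
\[
\langle o^+\,b\,z\rangle^+ = \Gamma(o^+,o^-,b,o^+,z) = \Gamma(z,o^+,b,o^-,o^+) = o^+
\]
by (2)(a) then Axiom (4)(v); and
\[
\langle x\,o^-\,z\rangle^+ = \Gamma(x,o^-,o^-,o^+,z) = \Gamma(o^-,x,o^-,z,o^+) = o^+
\]
by (2)(b) then Axiom (4)(iii), the last step being legitimate since $o^- \in C_{xz}$ for $x,z \in U_{o^-}$. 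An affine map vanishing at the origin is linear, so $\langle\cdot\cdot\cdot\rangle^+$ is trilinear; the case of $\langle\cdot\cdot\cdot\rangle^-$ is identical after exchanging $o^+$ and $o^-$.

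The para-associative law for the pair is nothing but the semitorsor law (Axiom (1)) for $\cX_{o^-,o^+}$ restricted to arguments lying in the $\bA^\pm$: writing $(pqr):=\Gamma(p,o^-,q,o^+,r)$, Axiom (1) reads $((xyz)uv)=(x(uzy)v)=(xy(zuv))$, which under the relabelling $z\mapsto u,\,u\mapsto v,\,v\mapsto w$ becomes exactly $\langle\langle xyu\rangle^+ vw\rangle^+=\langle x\langle vuy\rangle^- w\rangle^+=\langle xy\langle uvw\rangle^+\rangle^+$ (cf.\ Appendix B); the middle term genuinely involves $\langle\cdot\cdot\cdot\rangle^-$ because its inner factor $(vuy)$ has its outer arguments in $\bA^-$ and so lands in $\bA^-$ by the second inclusion of Axiom (6). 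Functoriality is then routine: a homomorphism of geometries preserves $\Gamma$ and $\Pi_r$, hence carries $\bA^\pm$ $\bK$-linearly into the target spaces and intertwines the products.

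For part (ii), taking $(o^+,o^-):=(a,c)$ in part (i) yields $\bB=U_c=\bA^+$ with origin $a$, and by the symmetry relation (2)(a),
\[
x z = \Gamma(x,a,b,c,z) = \Gamma(z,c,b,a,x) = \langle z\,b\,x\rangle^+ ,
\]
where the fixed unit $b$ lies in $\bA^-=U_a$ (as $b\top a$). Bilinearity of $(x,z)\mapsto xz$ is immediate from the trilinearity just proved. Associativity follows from the para-associative law, since both $(xz)w$ and $x(zw)$ reduce to $\langle\langle wbz\rangle^+ b\,x\rangle^+$. Unitality is the pair of identities
\[
x\cdot b = \Gamma(b,c,b,a,x) = x, \qquad b\cdot x = \Gamma(b,a,b,c,x) = x,
\]
each an instance of Axiom (4)(iii) (valid because $b\in C_{ac}$, the second after applying (2)(a)). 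Hence $\bB$ is a unital associative $\bK$-algebra, and its functoriality is inherited from that of part (i).
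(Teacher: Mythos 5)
Your proposal is correct and follows essentially the same route as the paper's own proof: well-definedness from Axiom (6), tri-affineness via the structurality argument of Corollary \ref{pair!!}, trilinearity by evaluating at the origins using the diagonal-value and symmetry axioms (the same three identities the paper checks), para-associativity by restriction of the semitorsor law, and unitality in (ii) from Axiom (4)(iii). If anything, you are more explicit than the paper where it silently uses a symmetry step (e.g.\ in $\Gamma(o^+,o^-,b,o^+,z)=o^+$) and in getting the multiplication order straight in (ii) ($xz=\langle zbx\rangle^+$, whereas the paper's $\langle xuz\rangle^+$ is off by a harmless reversal).
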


\begin{proof}
(i)
By the ``semi-torsored pair axiom'' (6), the maps
$\bA^\pm  \times \bA^\mp \times \bA^\pm \to \bA^\pm $
are well-defined. By restriction from $\cX_{o^+,o^-}$, they
satisfy the para-associative law.
They are tri-affine: the proof is exactly the same as the one of
Corollary \ref{pair!!}.
Thus it only remains to be shown that they are trilinear,
with respect to the origins $o^\pm \in \bA^\pm$.
Let $x,z \in \bA^+$ and $b \in \bA^-$.
Then
\[
\langle xbo^+\rangle^+=\Gamma(x,o^-,b,o^+,o^+)=o^+, \quad
\langle o^+ bz\rangle^+= \Gamma(o^+,o^-,b,o^+,z)=o^+
\]
\[
\langle xo^-z\rangle^+=\Gamma(x,o^-,o^-,o^+,z)=\Gamma(o^-,x,o^-,z,o^+)=o^+.
\]
by the Diagonal Value Axiom (4).
If $\phi:\cX \to \cY$ is a base-point preserving homomorphism,
then restriction of $\phi$ yields, by definition of a homomorphism,
a pair of $\bK$-linear maps $\bA^\pm \to (\bA')^\pm$,
which commutes with the product maps $\Gamma,\Gamma'$ and hence
is a homomorphism of associative pairs.

(ii)
With notation from (i), we have $xz=\langle xuz\rangle^+$, and hence the product
is well-defined, bilinear and associative  $\bA \times \bA \to \bA$.
We only have to show that $u$ is a unit element:
but this is immediate from
$xu=\Gamma(x,a,u,b,u)=x=\Gamma(u,a,u,b,x)=ux$.
\end{proof}

\begin{example}
For any $\bB$-module $W$, the Grassmannian geometry $\cX$ is an
associative geometry, by the results of Chapter 2.
For a decomposition $W=o^+ \oplus o^-$, the corresponding
associative pair is
$(\bA^+,\bA^-)=
(\Hom_\bB(o^+,o^-),\Hom_\bB(o^-,o^+))$, by Theorem 1.7.
In case $W$ is a topological module over a topological ring $\bK$,
we may also work with subgeometries of the whole Grassmannian, such
as Grassmannians of {\em closed subspaces with closed complement}.
For $\bK=\bR$ or $\bC$, if $W$ is, e.g., a Banach space, the associated
associative pair is a {\em pair of spaces of bounded linear operators}.
\end{example}

\begin{remark}
There is a natural definition of \emph{structural transformations of
associative pairs}. They are induced by structural pairs $(f,g)$
satisfying $f(o^+)=o^+$, $g(o^-)=o^-$ and $f(\bA^+)\subset \bA^+$,
$g(\bA^-) \subset \bA^-$. With respect to such pairs, the construction
obtained from the theory is still functorial.
\end{remark}

\subsection{From associative pairs to geometries}

\begin{theorem}
\begin{enumerate}[label=\roman*\emph{)},leftmargin=*]
\item
For every associative pair $(\bA^+,\bA^-)$ there exists
an associative geometry $\cX$ with base point $(o^+,o^-)$ having
$(\bA^+,\bA^-)$ as associated pair.
\item
For every unital associative algebra $(\bA,1)$ there exists
an associative geometry $\cX$ with transversal triple $(o^+,\Delta,o^-)$ having
$(\bA,1)$ as associated algebra.
\end{enumerate}
\end{theorem}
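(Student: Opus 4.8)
The plan is to realize the abstract associative pair (resp.\ algebra) concretely as the tangent object of a \emph{Grassmannian} geometry, so that all six axioms of an associative geometry come for free (by Chapter~2, as recorded in the Example following the preceding theorem, the Grassmannian of any module \emph{is} an associative geometry), and only the identification of the associated pair remains to be verified. Thus the real work is purely algebraic: producing a module and base ring whose associated pair, computed by the preceding theorem, is exactly the prescribed one.

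For part (i) the main step is the \emph{standard embedding} of the associative pair. I would first build a unital associative $\bK$-algebra $\mathcal{E}$ together with two orthogonal idempotents $e,f$ with $e+f=1$, whose off-diagonal Peirce components $f\mathcal{E}e$ and $e\mathcal{E}f$ are identified with $\bA^+$ and $\bA^-$, and whose triple products $f\mathcal{E}e\cdot e\mathcal{E}f\cdot f\mathcal{E}e \to f\mathcal{E}e$ and $e\mathcal{E}f\cdot f\mathcal{E}e\cdot e\mathcal{E}f \to e\mathcal{E}f$, taken inside $\mathcal{E}$, reproduce exactly the given trilinear products $\langle\cdot\cdot\cdot\rangle^\pm$. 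The diagonal blocks $e\mathcal{E}e$ and $f\mathcal{E}f$ would be taken to be the unitizations of the multiplication algebras generated by the operators $z\mapsto\langle xyz\rangle^+$ on $\bA^+$ and $w\mapsto\langle yxw\rangle^-$ on $\bA^-$; associativity of $\mathcal{E}$ then encodes precisely the para-associative law of the pair. Once $\mathcal{E}$ is in hand, I would set $W:=\mathcal{E}$ viewed as a right module over the base ring $\bB:=\mathcal{E}$ and take $\cX:=\Gras_\mathcal{E}(W)$, the space of right ideals, with base point $(o^+,o^-):=(e\mathcal{E},f\mathcal{E})$. The computation $\Hom_\mathcal{E}(e\mathcal{E},f\mathcal{E})\cong f\mathcal{E}e$ (any module homomorphism $e\mathcal{E}\to f\mathcal{E}$ is left multiplication by an element of $f\mathcal{E}e$) identifies the tangent spaces $U_{o^-}\cong\bA^+$ and $U_{o^+}\cong\bA^-$ as prescribed, and under this identification the prototype products of the preceding theorem, $(X,Y,Z)\mapsto XYZ$ (composition in $\mathcal{E}$), become the products $\langle\cdot\cdot\cdot\rangle^\pm$ by the defining property of $\mathcal{E}$. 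Functoriality follows because a morphism of pairs extends to a homomorphism of standard embeddings, hence to a base-point preserving homomorphism of the associated Grassmannians.

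For part (ii), where $(\bA,1)$ is unital, the envelope may be taken to be $\mathcal{E}=M_2(\bA)$; equivalently, and more directly, I would set $\bB:=\bA$, $W:=\bA\oplus\bA$ as a right $\bA$-module, and $\cX:=\Gras_\bA(W)$. The three submodules $o^+:=\bA\oplus 0$, $o^-:=0\oplus\bA$ and $\Delta:=\setof{(a,a)}{a\in\bA}$ are pairwise transversal, so $\cX$ is of the first kind with transversal triple $(o^+,\Delta,o^-)$. By part (ii) of the preceding theorem the algebra attached to this triple is $\End_\bA(o^+)=\End_\bA(\bA_\bA)\cong\bA$ (endomorphisms of the rank-one free module are exactly left multiplications), with unit corresponding to $\Delta$ and product $\Gamma(x,o^+,\Delta,o^-,z)$ recovering $xz$. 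This is precisely where the \emph{non-commutative} base ring $\bB=\bA$ is essential, as flagged in the Notation: over $\bB=\bK$ one would instead obtain the far larger algebra of $\bK$-linear endomorphisms.

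The hard part will be the standard embedding in part (i): one must construct the diagonal Peirce algebras so that $\mathcal{E}$ is genuinely associative and so that the induced off-diagonal triple products coincide \emph{on the nose} with $\langle\cdot\cdot\cdot\rangle^\pm$, rather than merely exhibiting $(\bA^+,\bA^-)$ as a subpair of an operator pair. Everything geometric---the verification of axioms (1)--(6)---is inherited from Chapter~2 once the Grassmannian model is in place, so the proof reduces entirely to this envelope construction together with the routine $\Hom$-computations above.
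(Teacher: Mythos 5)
Your proposal is correct and is essentially the paper's own proof: for (i), take an algebra imbedding $(\hat{\bA},e)$ of the pair (e.g.\ its standard imbedding), let $\cX=\Gras_{\hat{\bA}}(\hat{\bA})$ be the Grassmannian of right ideals of $\hat{\bA}$ viewed as a right module over itself, with base point $(o^+,o^-)=(e\hat{\bA},f\hat{\bA})$, $f=1-e$, and identify $\Hom_{\hat{\bA}}(e\hat{\bA},f\hat{\bA})\cong f\hat{\bA}e$ via $\phi\mapsto\phi(e)$; for (ii), take $W=\bA\oplus\bA$ over the base ring $\bB=\bA$ with transversal triple $(o^+,\Delta,o^-)$, so the associated algebra is $\End_\bA(\bA)\cong\bA$. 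Two caveats. First, what you single out as ``the hard part'' --- constructing the standard imbedding --- need not be done at all: its existence is a known fact about associative pairs (Loos, \emph{Jordan Pairs}, Notes to Chapter II, recalled in the paper's Appendix B), and the paper simply invokes it before performing the same $\Hom$-identification and the direct check that the triple products coincide; your sketch (diagonal Peirce blocks as unitized multiplication algebras, associativity encoding para-associativity) is the right idea, but a citation suffices. Second, your closing functoriality claim in (i) is false: the paper remarks explicitly that the construction depends on the chosen imbedding and is \emph{not} functorial, even if one always uses the standard imbedding (citing de las Pe\~{n}as Cabrera) --- a morphism of pairs need not extend to a homomorphism of the standard imbeddings; functoriality of a suitably modified construction (the geometry generated by the connected component) is only conjectured there. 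Since the theorem asserts bare existence, this error does not invalidate your proof of the statement, but the functoriality assertion should be deleted.
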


\begin{proof}
(ii)
Let $W = \bA \oplus \bA$, $o^+$ the first and $o^-$
the second factor and $\Delta$ the diagonal.
Then $(o^+,\Delta,o^-)$ is a transversal triple in the Grassmannian
geometry $\cX = \Gras_\bA(W)$, and its associated
algebra is $\bA \cong \Hom_\bA(\bA,\bA)$ (see the preceding
example, with $o^+ \cong o^- \cong \bA$).
Note that the connected component of $o^+$ can be interpreted as
the \emph{projective line over $\bA$}, cf.\ \cite{BeNe05}, \cite{Be08}.

\smallskip
(i)
Consider any algebra imbedding $(\hat{\bA},e)$ of the pair
$(\bA^+,\bA^-)$, for instance, its standard imbedding (see Appendix B).
Let $e$ denote the idempotent giving the grading of $\hat{\bA}$
and set $f = 1-e$. 
Then
$\hat{\bA} = \bA_{00} \oplus \bA_{01} \oplus \bA_{10} \oplus \bA_{11}$
where $\bA_{00} = f\hat{\bA}f$, $\bA_{01} = \bA^- = f\hat{\bA}e$,
$\bA_{10} = \bA^+ = e\hat{\bA}f$ and $\bA_{11} = e\hat{\bA}e$.
Let $\cX = \Gras_{\hat{\bA}}(\hat{\bA})$ be the Grassmannian of
all \emph{right} ideals in $\hat{\bA}$. As base point in $\cX$ we
choose
\[
o^+ := e \hat{\bA} = \bA_{11} \oplus \bA_{10}, \quad
o^- := f \hat{\bA} = \bA_{00} \oplus \bA_{01}\,.
\]
The associative pair corresponding to $(\cX;o^+,o^-)$ is
(see example at the end of the last section)
\[
(\Hom_{\hat{\bA}}(o^-,o^+),\Hom_{\hat{\bA}}(o^+,o^-) ).
\]
But this pair is naturally isomorphic to $(\bA^+,\bA^-)$.
Indeed,
\[
\Hom_{\hat{\bA}}(e \hat{\bA},f \hat{\bA}) \to \bA_{01} =  f \hat{\bA} e,
\quad
f \mapsto f(e)
\]
is $\bK$-linear, well-defined (since $f(e)e=f(ee)=f(e)$, $f$ being
right $\hat{\bA}$-linear) and has as inverse mapping
$c \mapsto (x \mapsto cx)$, hence is a $\bK$-isomorphism.
Identifying both pairs of $\bK$-modules in this way, a direct check
shows that the triple products also coincide, thus establishing the
desired isomorphism of associative pairs.
\end{proof}

\begin{remark}
It is of course
also possible to see (ii) as a special case of (i).
In this case we may work with the algebra imbedding of
$(\bA,\bA)$ into the matrix algebra
$\hat{\bA} = M(2,2;\bA)$, \textit{cf.}\ Appendix B.
\end{remark}

\begin{remark}[\textbf{Functoriality}]
Is the construction from the preceding theorem functorial, or can it
be modified such that it becomes functorial?
In the present form, the construction depends on the chosen algebra imbedding and hence is
not functorial (even if we always chose the standard imbedding the construction would not
become functoriel, see \cite{Ca04}).  However, motivated by corresponding results
from Jordan theory (\cite{Be02}),
we conjecture that the {\em geometry generated by the connected component}
depends functorially on the associative pair, thus leading to an equivalence of categories between associative pairs
and certain  associative
geometries with base point (whose algebraic properties reflect
``connectedness and simple connectedness'').
\end{remark}

\section{Further topics}
\seclabel{further}

\subsubsection*{(1) Jordan geometries revisited}
The present work sheds new light on geometries associated to \emph{Jordan
algebraic structures}:
in the same way as associative pairs give rise to Jordan pairs
by restricting to the diagonal ($Q(x)y=\langle xyx\rangle$; see Appendix B),
associative geometries give rise to ``Jordan geometries''. The new feature
is that we get \emph{two}
diagonal
restrictions $\Gamma(x,a,y,b,x)$ and $\Gamma(x,a,y,a,z)$ which are
equivalent. They  can be used to give a new axiomatic foundation of
``Jordan geometries''. Unlike the theory developed in
\cite{Be02}, this new foundation will
be valid also in case of characteristic 2 and
hence corresponds to general quadratic Jordan pairs. In this theory,
the torsors from the associative theory
 will be replaced by \emph{symmetric spaces} (the diagonal
$(xyx)$).

\subsubsection*{(2) Involutions, Jordan-Lie algebras, classical groups}
From a Lie theoretic point of view, the present work deals with
classical groups of type $A_n$ (the ``general linear'' family).
The other classical series (orthogonal, unitary and symplectic
families) can be dealt with by adding an \emph{involution} to
an associative geometry. This will be discussed in detail in
\cite{BeKi09}.
From a more algebraic point of view, this amounts to looking
at \emph{Jordan-Lie} or \emph{Lie-Jordan algebras} instead of
associative pairs (and hence is closely related to (1)),
and asking for the geometric counterpart.
In \cite{Be08}, it is advocated that this might also be interesting
in relation with foundational issues of quantum mechanics.

\subsubsection*{(3) Tensor Products}
In the associative and in the
Jordan-Lie categories, \emph{tensor products} exist (cf.\ \cite{Be08}
for historical remarks on this subject in relation with foundations of
Quantum Mechanics). What is the geometric interpretation of this
remarkable fact?

\subsubsection*{(4) Alternative Geometries}
The geometric object corresponding to \emph{alternative pairs}
(see \cite{Lo75}) should be a collection of Moufang loops,
interacting among each other in a similar way as the torsors
$U_{ab}$ do in an associative geometry.

\subsubsection*{(5) Classical projective geometry revisted}
The torsors $U_{ab}$ show already up in ordinary projective spaces,
and their alternative analogs will show up in octonion projective
planes. It should be interesting to review classical approaches
from this point of view.

\subsubsection*{(6) Invariant Theory}
The problem of
classifying the torsors $U_{ab}$ in a given geometry $\cX$ is
very close to classifying orbits in $\cX \times \cX$ under the
automorphism group. Invariants of torsors (``rank'') give rise to invariants
of pairs. Similarly, invariants of groups $(U_{ab},y)$ give rise to
invariants of triples (``rank and signature''), and invariants
(conjugacy class) of projective endomorphisms $L_{xayb}$ to
invarinats of quadrupels (``cross-ratio'').

\subsubsection*{(7) Structure theory: ideals and intrinsic subspaces}
We ask to translate features of the structure theory of associative pairs and
algebras to the level of associative geometries: what are
the geometric notions corresponding to left-, right- and inner ideals?
See \cite{BeL08} for the Jordan case.

\subsubsection*{(8) Positivity and convexity: case of $C^*$-algebras}
$C^*$-algebras and related triple systems (``ternary rings of operators'',
see \cite{BM04}) are distinguished among general ones by properties
involving ``positivity'' and ``convexity''. What is their geometric
counterpart on the level of associative geometries?
Note that these properties really belong to the involution $*$, so
these questions can be seen to fall in the realm of topic (2).

\section*{Appendix A: torsors and semitorsors}

\begin{definition}
A \emph{torsor} $(G,(\cdot,\cdot,\cdot))$ is a set $G$ together
with a ternary operation
$G^3 \to G; (x,y,z) \mapsto (xyz)$ satisfying the identities (G1)
and (G2) discussed in the Introduction (\S{0.2}).
\end{definition}

An early term for this notion, due to
Pr\"{u}fer, was \emph{Schar}. This was translated by
Suschkewitsch into Russian as {\cyr{grud}}. This was later
somewhat unfortunately translated into English as ``heap''.
Other terms that have been used are ``flock'' and ``herd''.
B. Schein, in various publications (\emph{e.g.}, \cite{Sch62})
and in private communication,
suggested adapting the Russian term directly
into English as ``groud'' (this rhymes with ``rude'', not ``crowd'').
In earlier drafts of this paper, we followed his suggestion.
However, we found that in talks on this subject, the general audience reaction to the term
was negative, and the referee noted that it is hard to pronounce
in an English sentence. The other terms noted above are not
appropriate, either. In the follow-up \cite{BeKi09} to this
paper, we wish to write of ``classical'' objects just as we speak
of classical groups, and ``classical heap'' or ``classical herd'',
for instance, do not seem suitable.

In the end, we decided to go with \emph{torsor}. This term is usually
used in a geometric sense to mean \emph{principal homogeneous space},
and is now generally accepted, largely due to the
popularizing efforts of Baez \cite{Ba09}. Using the same term for the
equivalent algebraic notion seemed to us a quite reasonable step.

For more on the history of the concept,
as well as of what we call semitorsors defined below, we refer the reader
to the work of Schein, \emph{e.g.}, \cite{Sch62}.

In a torsor $(G,(\cdot,\cdot,\cdot)$,
introduce \emph{left-}, \emph{right-} and \emph{middle multiplications} by
\[
(xyz) =: \ell_{x,y}(z)=:r_{y,z}(x)=:m_{x,z}(y)\,.
\]
Then the axioms of a torsor can be rephrased as follows:
\begin{align*}
\ell_{x,y} \circ r_{u,v} = r_{u,v} \circ \ell_{x,y} \tag{G1'} \\
\ell_{x,x}= r_{x,x}=\id \tag{G2'}
\end{align*}
or, in yet another way,
\begin{align*}
\ell_{x,y} \circ \ell_{z,u} = \ell_{\ell_{x,y}(z),u} \tag{G1''} \\
\ell_{x,y}(y)= r_{y,x}(y)=x\,. \tag{G2''}
\end{align*}
Taking $y=z$ in (G1''), and using (G2''), we get what one might call ``Chasle's relation''
for left translations
\[
\ell_{x,y} \circ \ell_{y,u} = \ell_{x,u}
\]
which for $u=x$ shows that the inverse of $\ell_{x,y}$ is $\ell_{y,x}$.
Similarly, we have a Chasle's relation for right translations, and
the inverse of $r_{x,y}$ is $r_{y,x}$.
Unusual, compared to group theory, is the r\^{o}le of the middle multiplications.
Namely, fixing for the moment a unit $e$, we have
\[
(x(uyw)z)=x(uy\inv w)\inv z=xw\inv yu\inv z=((xwy)uz)=(xw(yuz))
\]
(the \emph{para-associative law}, cf.\ relation (G3), Introduction), i.e.,
\[
m_{x,z} \circ m_{u,w} = \ell_{x,w} \circ r_{u,z} = r_{u,z}\circ \ell_{x,w}.
\tag{G3'}
\]
Taking $x=w$, resp.\ $u=z$, we see that all left and right multiplications
can be expressed via middle multiplications:
\[
r_{u,z} = m_{x,z} \circ m_{u,x}, \quad \ell_{x,w}=m_{x,z}\circ m_{z,w} .
\]
Taking $u=z$, resp.\ $x=w$, we see that
 $m_{x,z} \circ m_{z,x} =\id$,
hence middle multiplications are invertible.
In particular $m_{x,x}^2=\id$, which reflects the fact
that $m_{x,x}$ is inversion in the group $(G,x)$.
Also, (G3') implies that
\[
m_{x,e} \circ m_{x,e} = r_{x,e} \circ \ell_{x,e} =
\ell_{x,e} \circ (r_{e,x})\inv,
\]
which means that
conjugation by $x$ in the group with unit $e$ is equal
to  $(m_{x,e})^2$.

Since a torsor can be viewed as an equational class in the sense of
universal algebra
$(G,(\cdot,\cdot,\cdot))$, all of the usual notions apply. For instance,
a \emph{homomorphism of torsors} is a map $\phi : G \to H$
such that $\phi\big((xyz)\big)=(\phi(x)\phi(y)\phi(z))$, and
an \emph{anti-homomorphism of torsors} is a homomorphism
to the \emph{opposite torsor} (same set with product
$(x,y,z) \mapsto (zyx)$).
Homomorphisms enjoy similar properties as usual affine maps.
It is easily proved that left and right multiplications are
automorphisms (called \emph{inner}), whereas middle multiplications
are inner anti-automorphisms.
Other notions, such as subtorsors, products, congruences and quotients
follow standard patterns.

\begin{definition}
A \emph{semitorsor} $(G,(\cdot,\cdot,\cdot))$ is a set $G$ with a ternary operation
$G^3 \to G; (x,y,z) \mapsto (xyz)$ satisfying
the para-associative law (G3) from the Introduction.
\end{definition}

The basic example is the \emph{symmetric semitorsor} on sets $A$
and $B$, the set of all relations between $A$ and $B$
with $(rst)=r\circ s\inv\circ t$, where $\circ$ is the
composition of relations.

Clearly, fixing the middle element in a semitorsor gives rise to
a semigroup; but, in contrast to the case of groups, not all
semigroups are obtained in this way. For more on semitorsors,
see, \emph{e.g.} \cite{Sch62} and the references therein.

\section*{Appendix B: Associative pairs}

\begin{definition}
An \emph{associative pair (over $\bK$)} is a pair $(\bA^+,\bA^-)$ of
$\bK$-modules together with two trilinear maps
\[
\langle \cdot,\cdot,\cdot \rangle^\pm :\bA^\pm \times \bA^\mp \times \bA^\mp \to
\bA^\pm
\]
such that
\[
\langle xy \langle zuv\rangle^\pm \rangle^\pm =
\langle\langle xyz\rangle^\pm uv\rangle^\pm=\langle x\langle uzy\rangle^\mp v\rangle^\pm.
\]
\end{definition}

\noindent Note that we follow here the convention of Loos \cite{Lo75}.
Other authors (e.g. \cite{MMG})
use a modified identity, replacing the last term
by $\langle x\langle yzu\rangle^\mp v\rangle^\pm$.
But both versions are equivalent: it suffices to replace
$\langle \quad \rangle^-$ by the trilinear map
$(x,y,z) \mapsto \langle z,y,x\rangle^-$. We prefer the definition given by
Loos since it takes the same form as the para-associative law
in a semitorsor.
We should mention, however, that for
\emph{associative triple systems}, i.e., $\bK$-modules
 $\bA$ with
a trilinear map $\bA^3 \to \bA$, $(x,y,z) \mapsto \langle xyz\rangle$
these two versions of the defining identity have to be distinguished,
leading to two different kinds of associative triple systems
(``ternary rings'', cf.\ \cite{Li71}, and associative triple systems
\cite{Lo72};
all this is best discussed in the context of associative pairs, resp.\
geometries, \emph{with involution}, see topic (2) in Chapter 4 and
\cite{BeKi09}.)
In any case, for fixed $a \in \bA^-$, $\bA^+$ with
\[
x \cdot_a y := \langle xay\rangle
\]
is an associative algebra, called the \emph{$a$-homotope} and
 denoted by $\bA_a^+$.

\subsubsection*{Examples of associative pairs}

\begin{enumerate}[label={(}\arabic*{)},leftmargin=*]
\item
Every associative algebra $\bA$ gives rise to an associative pair
$\bA^+ = \bA^- = \bA$ via $\langle xyz\rangle^+ = xyz$, $\langle xyz\rangle^- = zyx$.
\item
For  $\bK$-modules $E$ and $F$, let
$\bA^+ = \Hom(E,F)$, $\bA^- = \Hom(F,E)$,
\[
\langle XYZ\rangle^+ = X \circ Y \circ Z \qquad
\langle XYZ\rangle^- = Z \circ Y \circ X.
\]
\item
Let $\hat{\bA}$ be an associative algebra with unit $1$ and idempotent $e$
and $f:=1-e$.
Let
\[
\hat{\bA} = f \hat{\bA} f \oplus f \hat{\bA} e \oplus e \hat{\bA} e
\oplus e \hat{\bA} f =
\bA_{00} \oplus \bA_{01} \oplus \bA_{11} \oplus \bA_{10}
\]
with
$\bA_{ij} = \setof{x \in \hat{\bA}}{ex=ix, xe=jx}$
the associated Peirce decomposition. Then
\[
(\bA^+,\bA^-) := (\bA_{01},\bA_{10}), \quad
\langle xyz\rangle^+ := xyz, \quad \langle xyz\rangle^- := zyx
\]
is an associative pair.
\end{enumerate}

\subsubsection*{The standard imbedding}
It is not difficult to show that every associative pair arises from
an associative algebra $\hat{\bA}$ with idempotent $e$ in the way
just described (see  \cite{Lo75}, Notes to Chapter II).
We call this an \emph{algebra imbedding for $(\bA^+,\bA^-)$}.
There are several such imbeddings (see \cite{Ca04} for a comparison of
some of them). Among these is a minimal choice called the
\emph{standard imbedding of the associative pair}. 
For instance, in Example (2) we may
take $\hat{\bA} = \End(E \oplus F)$ with $e$ the projector onto $E$ along
$F$ (but this choice will in general not be minimal).
In Example (1), take $\hat{\bA} := \End_\bA(\bA \oplus \bA)=
M(2,2;\bA)$ and
$e$ the projector onto the first factor.

\subsubsection*{The associated Jordan pair}
Formally, associative pairs give rise to Jordan pairs in exactly the
same way as torsors give rise to symmetric spaces: the Jordan pair is
 $(V^+,V^-):=(\bA^+,\bA^-)$ with
the quadratic map
$Q^\pm(x)y:=\langle xyx\rangle^\pm$
and its polarized version
\[
T^\pm(x,y,z):=Q^\pm(x+z)y -Q^\pm(x)y- Q^\pm(z)y =
\langle xyz\rangle^\pm+\langle zyx\rangle^\pm.
\]

\subsubsection*{Associative pairs with invertible elements}
We call $x \in \bA^\pm$ \emph{invertible} if
\[
Q(x):\bA^\mp \to \bA^\pm, \quad y \mapsto \langle xyx\rangle
\]
is an invertible operator.
As shown in \cite{Lo75}, associative pairs with invertible
elements correspond to unital associative algebras:
namely, $x$ is invertible if and only if the algebra $\bA_x$ has
a unit (which is then $x\inv:=Q(x)\inv x$).


\begin{thebibliography}{BeNe05}

\bibitem[Ba09]{Ba09}
J.~Baez,
Torsors made easy,
\url{http://math.ucr.edu/home/baez/torsors.html}.

\bibitem[Be02]{Be02}
W.~Bertram,
Generalized projective geometries: general theory and equivalence with Jordan structures,
\textit{Adv. Geom.} \textbf{2} (2002), 329--369 (electronic version: preprint 90 at
\url{http://homepage.uibk.ac.at/~c70202/jordan/index.html}).

\bibitem[Be03]{Be03}
W.~Bertram,
The geometry of null systems, Jordan algebras and von Staudt's Theorem,
\textit{Ann. Inst. Fourier} \textbf{53} (2003) fasc. 1, 193--225.

\bibitem[Be04]{Be04}
W.~Bertram, From linear algebra via affine algebra to projective algebra,
\textit{Linear Algebra and its Applications} \textbf{378} (2004), 109--134.

\bibitem[Be08b]{Be08b}
W.~Bertram,
Homotopes and conformal deformations of symmetric spaces.
\textit{J. Lie Theory} \textbf{18} (2008), 301--333;
arXiv: \url{math.RA/0606449}.

\bibitem[Be08]{Be08}
W.~Bertram,
On the Hermitian projective line as a home for the geometry of quantum theory.
In \textit{Proceedings XXVII Workshop on Geometrical Methods in Physics,
Bia\l owie\.za 2008};
arXiv: \url{math-ph/0809.0561}.

\bibitem[Be10]{Be07}
W.~Bertram,
Jordan structures and non-associative geometry.
In \textit{Trends and Developments in Infinite Dimensional Lie Theory},
Progress in Math., Birkhaeuser, to appear 2010;
arXiv: \url{math.RA/0706.1406}.

\bibitem[BeKi09]{BeKi09}
W.~Bertram and M.~Kinyon,
Associative Geometries. II: Involutions, the classical torsors, and their homotopes,
\textit{J. Lie Theory}, to appear;
arXiv: \url{0909.4438}.

\bibitem[BeL08]{BeL08}
W.~Bertram and H.~Loewe,
Inner ideals and intrinsic subspaces,
\textit{Adv. in Geometry} \textbf{8} (2008), 53--85;
arXiv: \url{math.RA/0606448}.

\bibitem[BeNe05]{BeNe05}
W.~Bertram and K.-H.~Neeb,
Projective completions of Jordan pairs. II: Manifold structures and symmetric spaces,
\textit{Geom. Dedicata} \textbf{112} (2005), 73 -- 113;
arXiv: \url{math.GR/0401236}.

\bibitem[BM04]{BM04}
D.~P.~Blecher and C.~Le~Merdy,
\textit{Operator algebras and their modules -- an operator space approach},
Clarendon Press, Oxford, 2004.

\bibitem[Cer43]{Cer43}
J.~Certaine,
The ternary operation $(abc)=ab\sp {-1}c$ of a group,
\textit{Bull. Amer. Math. Soc.} \textbf{49} (1943), 869--877.

\bibitem[Ca04]{Ca04}
I.~de~las~Pe\~{n}as~Cabrera,
A note on the envelopes of an associative pair,
\textit{Comm. Algebra} \textbf{32} (2004), 4133--4140.

\bibitem[Li71]{Li71}
W.~G.~Lister,
Ternary rings,
\textit{Trans. Amer. Math. Soc.} \textbf{154} (1971), 37--55.

\bibitem[Lo69]{Lo69}
O.~Loos,
\textit{Symmetric Spaces I},
Benjamin, New York, 1969.

\bibitem[Lo72]{Lo72}
O.~Loos,
Assoziative Tripelsysteme,
\textit{Manuscripta Math.} \textbf{7} (1972), 103--112.

\bibitem[Lo75]{Lo75}
O.~Loos,
\textit{Jordan Pairs},
Lecture Notes in Math. \textbf{460}, Springer, New York, 1975.

\bibitem[MMG]{MMG}
J.~A.~Cuenca Mira, A.~Garc\`{i}a Mart\`{i}n and C.~Mart\`{i}n Gonz\'{a}lez,
Jacobson density for associative pairs and its applications,
\textit{Comm. Algebra} \textbf{17} (1989), 2595--2610.

\bibitem[PR09]{PR09}
R. Padmanabhan and S. Rudeanu,
\textit{Axioms for Lattices and Boolean Algebras},
World Scientific, 2009.

\bibitem[Sch62]{Sch62}
B.~Schein,
On the theory of inverse semigroups and generalized grouds,
in \textit{Twelve papers in logic and algebra}, AMS Translations, Ser. 2, \textbf{113},
1979, pp. 89--123.

\bibitem[Va66]{Va66}
V.~V.~Vagner,
On the algebraic theory of coordinate atlases.  (Russian)
\textit{Trudy Sem. Vektor. Tenzor. Anal.} \textbf{13} (1966), 510--563.

\end{thebibliography}
\end{document}